\newtheorem{theorem}{Theorem}
\newtheorem{lemma}{Lemma}[section]
\newtheorem{proposition}[lemma]{Proposition}
\newtheorem{corollary}[lemma]{Corollary}
\newtheorem{remark}[lemma]{Remark}
\newtheorem*{notation}{Notation}
\renewcommand{\theequation}{\arabic{section}.\arabic{equation}}
\makeatletter \@addtoreset{equation}{section} \makeatother
\begin{document}

\title{\bf Bound states for logarithmic Schr\"odinger equations with
 potentials unbounded below
\thanks{E-mail: cxzhang@amss.ac.cn (C. Zhang), darkblue1121@163.com (X. Zhang).}}

\author{Chengxiang Zhang$^{\rm a}$, \; Xu Zhang$^{\rm b}$}

\date{\small\it$^{\rm a}$Academy of Mathematics and Systems Science, Chinese Academy of Sciences, Beijing 100190, China\\
$^{\rm b}$School of Mathematics and Statistics, Central South University,
Changsha 410083,
  China}
\maketitle

\begin{minipage}{13cm}
{\small {\bf Abstract:} We study the existence and concentration behavior of the bound states for the following logarithmic Schr\"odinger equation
\begin{equation*}
\begin{cases}
-\varepsilon^2\Delta v+V(x)v=v\log v^2 \  \ &\text {in}\ \ \mathbb R^N,\\
v(x)\to 0 \ \ &\text {as}\ \ |x|\to\infty,
\end{cases}
\end{equation*}
where $N\geq 1$, $\varepsilon>0$ is a small parameter, and $V$ may be  unbounded below at infinity with  a speed of  at most quadratic strength.
We show that around various types of  local topological   critical points of the potential function, positive bound state solutions exist and concentrate as $\varepsilon\to0$.

\medskip {\bf Key words:} logarithmic Schr\"odinger equation, semiclassical states,  potentials unbounded below.

\medskip 2010 Mathematics Subject Classification:  35Q55, 35B25}
\end{minipage}

\section{Introduction and main Results}

 In this paper, we study the   semiclassical states of the following
logarithmic Schr\"odinger equation
\begin{equation}\label{1.1}
\begin{cases}
-\varepsilon^2\Delta v+V(x)v=v\log v^2 \  \ &\text { in}\ \ \mathbb R^N,\\
v(x)\to 0 \ \ &\text { as}\ \ |x|\to\infty,
\end{cases}
\end{equation}
where $N\geq 1$ and $\varepsilon>0$ is a small parameter.
The problem comes from the study of standing waves to the time-dependent Schr\"odinger equation with logarithmic nonlinearity (\cite{BM1,BM2})
\begin{equation}\label{1.2}
i\hbar\frac{\partial \psi}{\partial t}+{\hbar^2}\Delta\psi-M(x)\psi+\psi\log |\psi|^2=0,
\end{equation}
where $\hbar$ denotes the Plank constant, $i$ is the imaginary unit. We call $\psi$ a standing wave solution if it possesses the form $\psi(x,t)=\exp\{-iEt/\hbar\}v(x)$.
 Then $\psi$ is a standing wave solution for \eqref{1.2} if and only if $v$ solves
 \eqref{1.1}
with $\varepsilon^2=\hbar$ and $V(x)=M(x)-E$.
For the dimensionless logarithmic Schr\"odinger equations, i.e., \eqref{1.2} with $\hbar=1$, standing waves
 have been studied in recent years in \cite{dAvenia2014,JS,Tanaka-Zhang,Tr,Wang-Zhang-1}.
 In these papers, multiple existence,  uniqueness and asymptotic behaviors of bound state solutions  are studied
 for \eqref{1.1} (for $\varepsilon=1$)
 with various potential functions which are bounded from below.
 For the semiclassical states of logarithmic Schr\"odinger equations, as $\varepsilon\to 0$, \cite{AdMF-18,AJ-19,AJ2} studied the existence of positive solutions  of \eqref{1.1} which are localized around global minimum points or global saddle points of the potential.
 The authors of \cite{Wang-Zhang-2} constructed an unbounded sequence of sign-changing bound state solutions around a local minimum point of the potential.
 In \cite{ITWZ}, the potential function of \eqref{1.1} is assumed  to possess a finite number of singularities of at most logarithmic strength and localized bound  state solutions are constructed around the singular points.
 We note that these results for semiclassical states of logarithmic Schr\"odinger equations are motivated  by
 the extensive study of  the semiclassical Schr\"odinger equation with power-law nonlinearity:
\begin{equation}\label{1.3}
\begin{cases}
-\varepsilon^2\Delta v+V(x)v=|v|^{p-2}v \  \ &\text { in}\ \ \mathbb R^N,\\
v(x)\to 0 \ \ &\text { as}\ \ |x|\to\infty,
\end{cases}
\end{equation}
where $p\in (2,2^*)$ with
$2^*=2N/(N-2)$ if $N\geq 3$ and $2^*=\infty$ if
$N\leq 2$.
Starting from the pioneer work \cite{FW} and \cite{Rabinowitz1992}, there have been a great deal of work on the existence of semiclassical states for \eqref{1.3}.
 See \cite{AR,AW,BV,BV2,Byeon2002,BW-1,BW-2,BW,Schaftingen2013,DelPino-Felmer1996,DelPino-Felmer1997,WX1,WX} and the reference therein for more discussion on \eqref{1.3}.

The results mentioned above consider \eqref{1.1} with a general condition that potential functions are bounded from below, while in the present paper we investigate logarithmic Schr\"odinger equations with non-confining potentials which may be unbounded below and propose a variational framework to tackle this case.
 As a meaningful example, we point that the following logarithmic Schr\"odinger equation
\begin{equation}\label{ex}
	-\varepsilon^2\Delta u-|x|^2u=u\log u^2
\end{equation}
admits at least a positive bound state solution for each $\varepsilon\in(0, \frac12)$, with its explicit formula $\exp\{\frac{1+\sqrt{1-4\varepsilon^2}}4(N-\varepsilon^{-2}|x|^2)\}$, which is a single peak solution  localized around the origin as $\varepsilon\to0$.
So it is essential to propose the general  conditions on potential function (with the case $-|x|^2$ included) which ensure the existence of solutions to equation \eqref{1.1}.
We also refer to \cite{BF,CC,EB,ITWZ,WW} for more discussion
on Schr\"odinger operators or Schr\"odinger equations with potentials unbounded below.
To study positive bound state solutions to \eqref{1.1} in this situation, we state the precise assumptions on $V$.
Assume that
\begin{enumerate}
	\item[(V0)] $\liminf_{|x|\to\infty}V(x)|x|^{-2}>-\infty$;
\item[(V1)] $V\in C(\mathbb R^N,\mathbb R)$ and there is a bounded domain
  $\Omega\subset {\mathbb{R}}^{N}$  such that $$V_0:=\min\limits_{x\in \Omega}V(x)<\min\limits_{x\in \partial\Omega}V(x).$$
\end{enumerate}
Under the assumptions (V0) and (V1),
$$\mathcal V:=\set{ x \in \Omega |   V(x) =V_0}$$   is a nonempty compact subset of $\Omega$.
Without loss of generality, we can assume that $\partial\Omega$ is smooth
and $0\in\mathcal V\subset\Omega$.
Throughout this paper, for any set $\Lambda\subset\mathbb R^N$, $\delta>0$, $\varepsilon>0$,
we denote
\begin{equation*}
\begin{split}
&\Lambda^\delta=\set{x\in\mathbb R^N | {\rm dist}(x, \Lambda):=\inf_{y\in\Lambda}|x-y|<\delta},\\
&\Lambda_\varepsilon=\set{x\in\mathbb R^N | \varepsilon x\in\Lambda}.
\end{split}
\end{equation*}
Then our first result is as follows.
\begin{theorem}\label{th1.1}
Suppose (V0) and (V1) hold. Then there exists $\varepsilon_0>0$ such that for each $\varepsilon\in(0,\varepsilon_0)$, equation \eqref{1.1} admits a positive solution $v_\varepsilon$ satisfying
\begin{itemize}
\item[(i)] $v_\varepsilon$ possesses a unique local maximum point $x_\varepsilon\in \Omega$
 such that $\text{dist} (x_\varepsilon,\mathcal V)\to 0$ and
 $$v_\varepsilon(\varepsilon x+x_\varepsilon)\to v(x)\quad \text{strongly in}\quad H^1(\mathbb R^N) \quad \text{as}\quad \varepsilon\to 0,
 $$
 where
$v(x)=\exp\{\frac{N+V_0-|x|^2}2\}$ is the unique positive radial solution of
\begin{equation}\label{eq1.4}
	-\Delta v+V_0v= v\log v^2,\quad v\in H^1(\mathbb R^N).
\end{equation}
\item[(ii)] for any $\delta>0$, there exist  $C,c>0$ such that
$$v_\varepsilon(x)\leq Ce^{-c\varepsilon^{-2}|x-x_\varepsilon|^2}\quad \text{for}\ \ x\in\mathbb R^N.$$
\end{itemize}
\end{theorem}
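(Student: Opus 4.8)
The plan is to combine a semiclassical rescaling with a del Pino--Felmer type penalization, a non-smooth minimization over a Nehari manifold, and an a priori Gaussian decay estimate in which the singularity of $s\mapsto s\log s^2$ at $s=0$ is what compensates the (at most quadratic) unboundedness of $V$ from below. Writing $v(x)=u(x/\varepsilon)$ turns \eqref{1.1} into $-\Delta u+V(\varepsilon x)u=u\log u^2$ on $\mathbb R^N$, formally the Euler--Lagrange equation of
\[
J_\varepsilon(u)=\tfrac12\int_{\mathbb R^N}|\nabla u|^2+\tfrac12\int_{\mathbb R^N}\bigl(V(\varepsilon x)+1\bigr)u^2-\tfrac12\int_{\mathbb R^N}u^2\log u^2 .
\]
There are two obstructions. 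First, since $V$ may tend to $-\infty$ quadratically, the quadratic form $u\mapsto\int(V(\varepsilon x)+1)u^2$ is sign-indefinite and there is no obvious Hilbert space making $J_\varepsilon$ coercive; I would cure this by penalizing the nonlinearity outside $\Omega_\varepsilon$, replacing $s\log s^2$ there by $g_\varepsilon(x,s)=\min\{s\log s^2,\,as\}$ (for $s>0$, with $a>0$ small, fixed in terms of $\min_{\partial\Omega}V-V_0$) while keeping $g_\varepsilon(x,s)=s\log s^2$ on $\Omega_\varepsilon$. Since $s\log s^2\le as$ for all small $s>0$, the true behaviour near $0$ is retained everywhere, which is the feature that forces decay. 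Second, the logarithmic term makes $J_\varepsilon$ neither finite on all of $H^1(\mathbb R^N)$ nor of class $C^1$; here I would use the standard splitting $J_\varepsilon=\Phi_1+\Phi_2$ with $\Phi_1\in C^1$ and $\Phi_2$ convex, lower semicontinuous and nonnegative, and work inside Szulkin's critical point theory on a suitable space $E_\varepsilon\hookrightarrow H^1(\mathbb R^N)$ adapted to (V0), the point being that the penalization and (V0) together make $E_\varepsilon$ complete and the penalized energy bounded below on its Nehari set.

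I would then solve the penalized problem by minimization. On the Nehari manifold $\mathcal N_\varepsilon=\{u\neq0:\int|\nabla u|^2+\int V(\varepsilon x)u^2=\int g_\varepsilon(x,u)u\}$ the fibering map $t\mapsto J_\varepsilon(tu)$ has a unique positive maximum, solvable explicitly because of the algebraic form of $s\log s^2$, so every ray meets $\mathcal N_\varepsilon$ once; moreover on the part of $\mathcal N_\varepsilon$ where the penalization is inactive one has the identity $J_\varepsilon(u)=\tfrac12\|u\|_{L^2}^2$. Hence the natural candidate is a minimizer of (essentially) $\|u\|_{L^2}^2$ over $\mathcal N_\varepsilon$, which I would take nonnegative; one shows $\inf_{\mathcal N_\varepsilon}J_\varepsilon>0$ is attained, a suitable Palais--Smale type compactness condition at the relevant level (in Szulkin's sense) coming from the penalization together with (V0). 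The quantitative input is an energy comparison: testing with translated, suitably localized copies of the positive radial solution $v(x)=\exp\{\tfrac{N+V_0-|x|^2}{2}\}$ of \eqref{eq1.4} (whose energy equals $\tfrac12\|v\|_{L^2}^2$) one gets $\inf_{\mathcal N_\varepsilon}J_\varepsilon\to\tfrac12\|v\|_{L^2}^2$ as $\varepsilon\to0$ and that the minimizer $u_\varepsilon$ concentrates; a strong maximum principle gives $u_\varepsilon>0$.

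It remains to remove the penalization and to obtain the decay and concentration. The crucial step is that $u_\varepsilon\le1$ outside $\Omega_\varepsilon$, so that $g_\varepsilon(\cdot,u_\varepsilon)=u_\varepsilon\log u_\varepsilon^2$ there and $u_\varepsilon$ solves the original rescaled equation; unscaling then yields $v_\varepsilon$. This smallness, and part (ii), follow from a comparison argument exploiting the logarithm: wherever $u_\varepsilon(y)$ is small the equation reads $-\Delta u_\varepsilon+\bigl(V(\varepsilon y)-\log u_\varepsilon^2\bigr)u_\varepsilon\le0$, and since $-\log u_\varepsilon^2\to+\infty$ as $u_\varepsilon\to0$ while $V(\varepsilon y)\ge-C(1+\varepsilon^2|y|^2)$ by (V0), the Gaussian $y\mapsto C\exp\{-c\varepsilon^{-2}|y-\varepsilon^{-1}x_\varepsilon|^2\}$ is a supersolution dominating $u_\varepsilon$ away from the concentration set; this gives (ii) and shows the penalization is never active for small $\varepsilon$. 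Finally, the energy estimate and compactness give $u_\varepsilon(\cdot+\varepsilon^{-1}x_\varepsilon)\to v$ strongly in $H^1(\mathbb R^N)$ after translation, using the uniqueness in \eqref{eq1.4}; since $v$ has a single nondegenerate maximum, for $\varepsilon$ small $v_\varepsilon$ has a unique local maximum $x_\varepsilon\in\Omega$ with $\mathrm{dist}(x_\varepsilon,\mathcal V)\to0$, and $v_\varepsilon(\varepsilon\cdot+x_\varepsilon)\to v$ in $H^1(\mathbb R^N)$, which is (i).

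The main obstacle is the a priori analysis that ties these steps together uniformly in $\varepsilon$: one must build a function space and a compactness scheme robust to $V$ being quadratically unbounded below, guarantee that the del Pino--Felmer penalization (which discards $s\log s^2$ for large $s$ outside $\Omega_\varepsilon$) is never triggered by the actual solution, and prove the Gaussian decay estimate uniformly as $\varepsilon\to0$. These are interdependent: the decay estimate is precisely what shows the penalization is inactive, while the energy bound on $E_\varepsilon$ is what localizes the region where the solution is small enough for the decay comparison to start. Closing this loop is the technical heart of the argument; by contrast the non-smoothness of the logarithmic energy is, at this point, routine, handled by the splitting $J_\varepsilon=\Phi_1+\Phi_2$ and Szulkin's theory.
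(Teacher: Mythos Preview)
Your proposal has the right high-level architecture (rescale, penalize \`a la del Pino--Felmer, find a critical point, prove Gaussian decay, remove the penalization), and your decay heuristic is essentially the one the paper uses. But there is a genuine gap at the existence stage, precisely at the point you yourself flag as ``the main obstacle'': you never say what the space $E_\varepsilon$ is, and your penalization $g_\varepsilon(x,s)=\min\{s\log s^2, as\}$ only modifies the nonlinearity. It does nothing about the fact that under (V0) the quadratic form $u\mapsto\int|\nabla u|^2+V(\varepsilon x)u^2$ may still be unbounded below (take $u$ supported where $V(\varepsilon x)\sim -\varepsilon^2|x|^2$). With this form indefinite, neither the mountain-pass structure nor boundedness of Palais--Smale sequences nor the Nehari minimization is available; the identity $J_\varepsilon(u)=\tfrac12\|u\|_{L^2}^2$ on $\mathcal N_\varepsilon$ does not by itself produce a minimizer, and nothing prevents minimizing sequences from drifting to infinity where $V\to-\infty$. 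Szulkin's splitting $J_\varepsilon=\Phi_1+\Phi_2$ only addresses the non-smoothness of the logarithm, not the sign-indefinite linear part.

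The paper closes this gap with a specific construction you do not have. It replaces $V$ by $\widetilde V(x)=\max\{V(x),|x|^2\}$ outside a large ball, works in the weighted space $H_\varepsilon=\{u\in H^1:\int\widetilde V(\varepsilon x)u^2<\infty\}$ (on which the logarithmic term is actually $C^1$, so Szulkin is unnecessary, and into which $L^q$ embeds compactly), and then adds back a correction functional $\Psi_\varepsilon(u)=\tfrac12\int(V_\varepsilon-\widetilde V_\varepsilon)\widehat\eta_\varepsilon(x,|u|)u^2$ whose key property is that $\sup_{u}|\Psi_\varepsilon(u)|+\sup_u\|\Psi_\varepsilon'(u)\|\le Ce^{-c/\varepsilon}$ uniformly. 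The cutoff $\widehat\eta_\varepsilon$ is designed so that $\Psi_\varepsilon$ vanishes on any function with $|u(x)|\le e^{-\varepsilon|x|^2}$ outside $B(0,R_0/\varepsilon)$, which is exactly what the Gaussian decay estimate later provides; this is how the penalization on the potential is removed. In short, the del Pino--Felmer penalization is used \emph{in addition} to this potential truncation, not instead of it. Your comparison argument for (ii) is close to the paper's, but it runs on the modified equation first and only afterwards recovers the original one; to make it rigorous you would need the $\Psi_\varepsilon$-mechanism (or something equivalent) already in place.
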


Clearly, the existence results in Theorem \ref{th1.1} can not cover the example given in \eqref{ex}.
In fact, the solution to \eqref{ex} given in closed form is localized around the maximum point of $-|x|^2$ while Theorem \ref{th1.1} deals with local minimum case.
To establish a general result which include the existence of a solution to equation \eqref{ex}, we adopt the following assumptions from \cite{DelPino-Felmer1997} and \cite{DelPino-Felmer2002}, which cover several classes of general critical points of potential function including local maximum and saddle point situation.

\begin{enumerate}
\item[(V2)] $V\in C^1(\mathbb R^N,\mathbb R)$ and there is an open and bounded set $\Omega$ with smooth boundary and  closed nonempty sets $\mathcal B,\mathcal B_0$ of $\Omega$ such that $\mathcal B$ is connected and $\mathcal B_0\subset \mathcal B$. Moreover,
\begin{equation}\label{eq1.9}
\mu_0:=\inf_{\gamma\in\mathcal T}\max_{x\in \mathcal B} V(\gamma(x))>\sup_{x\in \mathcal B_0}V(x),
\end{equation}
where $\mathcal T=\set{\gamma\in C(\mathcal B,\Omega) | \gamma(x)=x\ \text{for each}\ x\in \mathcal B_0}$.
\item[(V3)]  For any $x\in\partial \Omega$ such that $V(x)=\mu_0$,  $\partial_T V(x)\neq 0$, where $\partial_T $ denotes the tangential derivative.
\end{enumerate}
Then our second result is as follows.
\begin{theorem}\label{th1.2}
Suppose (V0), (V2) and (V3) hold. Then there exists $\varepsilon_0>0$ such that for each $\varepsilon\in(0,\varepsilon_0)$, equation \eqref{1.1} admits a positive solution $v_\varepsilon$ satisfying \begin{itemize}
\item[(i)] $v_\varepsilon$ possesses a unique local maximum point $x_\varepsilon\in \Omega$
 such that $V(x_\varepsilon)\to \mu_0$, $\nabla V(x_\varepsilon)\to 0$ and
 $$v_\varepsilon(\varepsilon x+x_\varepsilon)\to v(x)\quad \text{strongly in}\quad H^1(\mathbb R^N) \quad \text{as}\quad \varepsilon\to 0,
 $$
 where
$v(x)=\exp\{\frac{N+\mu_0-|x|^2}2\}$ is the unique positive radial solution of
\begin{equation*}
	-\Delta v+\mu_0v= v\log v^2,\quad v\in H^1(\mathbb R^N).
\end{equation*}
\item[(ii)] for any $\delta>0$, there exist  $C,c>0$ such that
$$v_\varepsilon(x)\leq Ce^{-c\varepsilon^{-2}|x-x_\varepsilon|^2}\quad \text{for}\ \ x\in\mathbb R^N.$$
\end{itemize}
\end{theorem}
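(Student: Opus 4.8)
The plan is to adapt the local mountain-pass / penalization method of del Pino and Felmer, in the nonsmooth variational setting forced by the logarithmic nonlinearity, building on the scheme already developed for Theorem~\ref{th1.1}. After the change of variables $u(y)=v(\varepsilon y)$, equation \eqref{1.1} is equivalent to
\[
-\Delta u+V(\varepsilon y)\,u=u\log u^2\qquad\text{in }\mathbb R^N,
\]
with formal energy $J_\varepsilon(u)=\tfrac12\int_{\mathbb R^N}\!\big(|\nabla u|^2+(V(\varepsilon y)+1)u^2\big)-\tfrac12\int_{\mathbb R^N}u^2\log u^2$. Two structural difficulties must be handled. First, by (V0) the quadratic form $\int(|\nabla u|^2+V(\varepsilon y)u^2)$ is bounded below only by $-C\varepsilon^2\int|y|^2u^2$, so it is not coercive and the naive $H^1$-framework breaks down. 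Second, $u\mapsto\int u^2\log u^2$ is not of class $C^1$ on $H^1$; for this we use the standard decomposition of the logarithmic energy into a convex lower semicontinuous part plus a $C^1$ part (in the sense of Squassina--Szulkin) and run all deformation and min--max arguments for the lower semicontinuous functional $J_\varepsilon$. The autonomous limit problem $-\Delta U+\mu_0U=U\log U^2$ has the explicit Gaussian ground state $U(y)=\exp\{(N+\mu_0-|y|^2)/2\}$, with $J_{\mu_0}(U)=\tfrac12\|U\|_{L^2(\mathbb R^N)}^2=:c_{\mu_0}$; the direct computation $J_{\mu_0}(tU)=\tfrac{t^2}{2}\|U\|_{L^2}^2(1-\log t^2)$ shows that $t\mapsto tU$ is a mountain pass at level $c_{\mu_0}$, a fact that will dictate the shape of the test family.

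To cure the non-coercivity we penalize exactly as in the proof of Theorem~\ref{th1.1}: fix a smooth bounded open set $\Lambda$ with $\overline\Omega\subset\Lambda$, replace $V$ by $\max\{V,\kappa\}$ outside $\Lambda$ for a large constant $\kappa$, and for $y\notin\Lambda_\varepsilon$ truncate the nonlinearity to $\min\{u\log u^2,\nu u\}$ with $\nu>0$ so small that this term is absorbed by a fraction of the linear part coming from $\max\{V,\kappa\}$. Let $J_\varepsilon$ denote the resulting penalized, lower semicontinuous functional. It has the mountain-pass geometry and, by construction, satisfies the Palais--Smale condition at every level $c$ with $\varepsilon^{-N}c$ bounded: a Palais--Smale sequence cannot transport mass into the region $\{y\notin\Lambda_\varepsilon\}$, where the functional is dominated by the coercive quadratic form of $\max\{V,\kappa\}+1$ minus a small linear perturbation; hence such a sequence is bounded in $H^1(\mathbb R^N)$, and boundedness together with the logarithmic Sobolev inequality yields convergence up to translations, the surviving translations being ruled out by the energy bound.

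Next we construct the min--max value encoding the topological data (V2). For $\xi\in\Omega$ put $u_{\varepsilon,\xi}(y)=\chi(\varepsilon y-\xi)\,U(y-\xi/\varepsilon)$ with $\chi$ a cutoff equal to $1$ on a neighbourhood of $\overline\Omega$ and supported in $\Lambda$, and set $\Phi_\varepsilon(\xi)=T\,u_{\varepsilon,\xi}$ with $T$ large and fixed so that $J_\varepsilon(\Phi_\varepsilon(\xi))<0$ for all $\xi\in\Omega$ and all small $\varepsilon$. Define
\[
c_\varepsilon=\inf_{\Gamma\in\mathcal T_\varepsilon}\ \max_{x\in\mathcal B}\ \max_{0\le t\le1}\ J_\varepsilon\big(t\,\Gamma(x)\big),
\]
where $\mathcal T_\varepsilon$ is the class of $\Gamma\in C(\mathcal B,H^1(\mathbb R^N))$ that agree with $\Phi_\varepsilon\circ\gamma$ on $\mathcal B_0$ for some $\gamma\in\mathcal T$ and are continuously deformed within the tube of functions concentrated inside $\Lambda_\varepsilon$. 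Expanding $J_\varepsilon(t\,u_{\varepsilon,\xi})$ and using the identity $J_{\mu_0}(tU)=\tfrac{t^2}{2}\|U\|_{L^2}^2(1-\log t^2)$ shows that $\varepsilon^{-N}J_\varepsilon(t\,u_{\varepsilon,\xi})=\tfrac{t^2}{2}\pi^{N/2}e^{N+V(\xi)}(1-\log t^2)+o(1)$, whose maximum in $t$ is $\tfrac12\pi^{N/2}e^{N+V(\xi)}$; this gives the upper bound on the test family, while a barycentre-type projection $H^1(\mathbb R^N)\to\Omega$ combined with the min--max inequality \eqref{eq1.9} for $V$ gives the matching lower bound, so that $\varepsilon^{-N}c_\varepsilon\to\tfrac12\pi^{N/2}e^{N+\mu_0}=c_{\mu_0}$. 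Since $\sup_{\mathcal B_0}V<\mu_0$, the same expansion shows $c_\varepsilon>\sup_{\gamma\in\mathcal T,\,x\in\mathcal B_0}\max_{t}J_\varepsilon(t\,\Phi_\varepsilon(\gamma(x)))$ for $\varepsilon$ small, i.e. the min--max level is not attained on the ``boundary'' of the class, so it is a candidate critical value.

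Finally, a deformation lemma in the nonsmooth setting produces a critical point $u_\varepsilon$ of $J_\varepsilon$ at level $c_\varepsilon$; here (V3) is used to build a pseudo-gradient field that stays tangent to the constraint $\partial\Omega$ and does not vanish near $\partial\Omega\cap\{V=\mu_0\}$, so the deformed competitors remain in $\mathcal T_\varepsilon$ and the flow cannot be blocked by the boundary — this both legitimizes the deformation and prevents the eventual concentration point from landing on $\partial\Omega$. By the maximum principle $u_\varepsilon>0$. The sharp energy estimate $c_\varepsilon\sim\varepsilon^N c_{\mu_0}$, together with the Palais--Smale analysis, forces $u_\varepsilon$ to be single-bumped: if $y_\varepsilon$ is its maximum point then $u_\varepsilon(\cdot+y_\varepsilon)\to U$ strongly in $H^1(\mathbb R^N)$, $x_\varepsilon:=\varepsilon y_\varepsilon$ stays in $\Omega$, and every accumulation point $x_0$ of $x_\varepsilon$ satisfies $V(x_0)=\mu_0$ and $\nabla V(x_0)=0$ (it is interior by (V3), and is a critical point of $V$ because otherwise the min--max could be lowered). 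Writing the equation as $-\Delta u_\varepsilon+(V(\varepsilon y)-\log u_\varepsilon^2)u_\varepsilon=0$ and comparing $u_\varepsilon$ with the barrier $e^{-c|y-y_\varepsilon|^2}$ — the coefficient $V(\varepsilon y)-\log u_\varepsilon^2$ is large and positive away from $y_\varepsilon$, thanks to (V0) controlling $V(\varepsilon y)$ from below — gives $u_\varepsilon(y)\le Ce^{-c|y-y_\varepsilon|^2}$; in particular $u_\varepsilon$ is exponentially small in $\varepsilon^{-2}$ outside $\Lambda_\varepsilon$, so the penalization there is inactive and $u_\varepsilon$ solves the rescaled equation. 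Undoing the scaling, $v_\varepsilon(x)=u_\varepsilon(x/\varepsilon)$ is the desired solution and assertions (i)--(ii) follow. The principal difficulty is the simultaneous management of the two obstructions: the penalization must be compatible both with the merely-quadratically-controlled, non-confining potential (so that Palais--Smale holds and no mass escapes) and with the nonsmooth logarithmic energy (so that the min--max machinery survives), and one must then show — through the Gaussian decay estimate, whose proof hinges on (V0) — that the resulting min--max critical point never feels the penalization; checking that (V3) genuinely forbids boundary concentration within the class $\mathcal T_\varepsilon$ is the other delicate point.
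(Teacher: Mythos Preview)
Your overall architecture---penalize, run a min--max shaped by (V2), show energy quantization, exclude boundary concentration via (V3), prove Gaussian decay, remove the penalization---is the right one and matches the paper. But there is a genuine gap in the penalization step, and it is exactly the point where the hypothesis (V0) bites.

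You propose to ``replace $V$ by $\max\{V,\kappa\}$ outside $\Lambda$''. This produces a modified equation $-\Delta u+\max\{V(\varepsilon y),\kappa\}\,u=\widetilde f(y,u)$. Once you find a critical point $u_\varepsilon$, you can hope to undo the truncation of the \emph{nonlinearity} by showing $u_\varepsilon$ is small outside $\Lambda_\varepsilon$; but you cannot undo the modification of the \emph{potential}: on the set where $V(\varepsilon y)<\kappa$ (in particular for $|y|$ large, since (V0) allows $V\sim -|x|^2$), the penalized equation simply is not the original one, no matter how tiny $u_\varepsilon$ is there. Your final paragraph asserts ``the penalization there is inactive'', but that is false for the potential part. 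The paper handles this by a genuinely different device: it works in the weighted space $H_\varepsilon=\{u:\int \widetilde V(\varepsilon y)u^2<\infty\}$ with $\widetilde V(x)=\max\{V(x),|x|^2\}$ for $|x|\ge R_0$, and adds a correction $\Psi_\varepsilon(u)=\tfrac12\int(V_\varepsilon-\widetilde V_\varepsilon)\,\widehat\eta_\varepsilon(y,|u|)u^2$ with a cutoff $\widehat\eta_\varepsilon(y,s)$ that equals $1$ for $s\le e^{-\varepsilon|y|^2}$ and $0$ for $s\ge 5e^{-\varepsilon|y|^2}$. Thus when one proves the Gaussian bound $u_\varepsilon(y)\le e^{-\varepsilon|y|^2}$ for $|y|\ge R_0/\varepsilon$, the correction restores \emph{exactly} the true potential and the original equation is recovered. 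Designing a penalization that is (a) uniformly controlled (here $|\Psi_\varepsilon|+\|\Psi_\varepsilon'\|\le Ce^{-c/\varepsilon}$) yet (b) switches itself off below a moving Gaussian threshold is the main new idea, and it is missing from your proposal.

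A few smaller points. After the rescaling $u(y)=v(\varepsilon y)$ your critical levels are $O(1)$, not $O(\varepsilon^N)$; the factors $\varepsilon^{-N}c_\varepsilon$ are inconsistent with the change of variables you already made. The paper avoids the nonsmooth Szulkin framework altogether: on $H_\varepsilon$ (which contains a weight $\sim|y|^2$) the map $u\mapsto\int u^2\log u^2$ is $C^1$, so ordinary critical point theory applies---this is another dividend of the weighted-space setup that your scheme forgoes. Finally, the paper excludes boundary concentration not by a tangential pseudo-gradient but, as in del~Pino--Felmer, by testing the rescaled equation against $\partial_{\nu_\varepsilon}u_\varepsilon$ and integrating by parts over a large ball; (V3) then forces a sign contradiction with the boundary integral $\int_{\partial\Omega_\varepsilon}(u^2\log u^2-u^2-2G(u))\,\nabla h\cdot\nu_\varepsilon$. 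Your pseudo-gradient description of how (V3) enters is too vague to carry the argument.
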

\begin{remark}\label{rek1.1}
For $\Omega$ in (V1) or (V2), we can fix $R_0>0$ such that $\Omega \subset B(0,R_0/2)$.
Moreover,  substituting $v$ with $\lambda v$ in \eqref{1.1} for a proper constant $\lambda>0$, we may  assume without loss of generality that
\begin{equation}\label{1.5}
V(x)\geq 1,\ \ \text { for\ }x\in B(0,R_0).
\end{equation}
\end{remark}
The study is motivated by a series of work on vanishing potential problems in semiclassical Schr\"odinger equations with power-law nonlinearity \eqref{1.3}.
Some general  assumptions on potentials which appear in this problem are  $\liminf_{x\to\infty}V(x)=0$ and $\inf_{\mathbb R^N}V(x)=0$.
In \cite{AFM},
Ambrosetti et al. studied the equation with  potential vanishing slowly at infinity and having positive local minimum.
Moroz and Van Schaftingen in \cite{Schaftingen2010} weakened the assumptions on the decaying rate of potential at infinity, including in particular the case that the potential possesses compact support. We note that the condition
$\inf_{x\in\mathbb R^N}V(x)=0$ is first introduced in \cite{Byeon2002} as the critical frequency case
for  \eqref{1.3},
for the reason that if $\inf_{x\in\mathbb R^N}V(x)<0$, neither ground state solutions nor nice limit problems exist as $\varepsilon\to 0$.
For other related results, we refer the readers to \cite{AR,AW,BV,BV2,BW-1,BW-2,BW,Schaftingen2013}.
On the other hand, the condition $\inf_{x\in\mathbb R^N}V(x)=0$ is no longer critical for existence of solutions to the logarithmic Schr\"odinger equations \eqref{1.1}.
In fact, it has been shown in \cite{ITWZ} that even the potential possesses several singular points at which $V\to-\infty$ with a speed of up to the logarithmic strength, there exist bound states with small amplitude concentrated around these singularities.
This expresses a different profile of logarithmic type equations.
To further understand the difference, it is worthwhile to investigate on another general case that $\liminf_{|x|\to\infty}V(x)=-\infty$.

Comparing with the results of  Schr\"odinger equations with power-law nonlinearities, as well as those with logarithmic nonlinearity in the literature,
the main novelty in our results is that the potential $V(x)$ may tend to $-\infty$ at infinity. To explain the difficulties
in our setting, set $u(x)=v(\varepsilon x)$ in \eqref{1.1}. Then
 \eqref{1.1} is equivalent to
\begin{equation}\label{eq2}
-\Delta u+V(\varepsilon x)u=u\log u^2 \  \ \text { in}\ \ \mathbb R^N,
\end{equation}
which is the Euler--Lagrange equation
associated with the energy functional:
\begin{equation*}
J_{\varepsilon}(u)=\frac{1}{2}\int_{\mathbb R^N}\big(|\nabla u|^2+V(\varepsilon x)u^2\big)dx
-\frac{1}{2}\int_{\mathbb R^N}\big(u^2\log u^2-u^2\big)dx.
\end{equation*}
We note that if $V$  is bounded from below, by  rescaling $w=\lambda u$ in \eqref{eq2}, the equation can be shifted to another one with a positive-definite linear part (\cite[Remark 1.1]{Tanaka-Zhang}), which enables one to use common variational approaches,
such as the  constraint minimization methods or the minimax principle,
to search for critical levels of the corresponding functional in an appropriate functional space.
However, under assumption (V0), the spectrum of operator $-\Delta +V(\varepsilon x)$ may still be unbounded below.
Therefore, the rescaling mentioned above does not work and it is difficult to seek out a mountain pass structure
for the corresponding functional.
On the other hand, the non-compactness problem is also noticeable since a large class of  strongly repulsive potential functions is included by our assumptions.
Besides,  the non-smoothness of functional
$J_\varepsilon$ in $H^1(\mathbb R^N)$
caused by the special growth of  logarithmic nonlinearity near $0$ is an additional difficulty.
To overcome these difficulties, for $R_0$ fixed in Remark \ref{rek1.1}, we truncate $\widetilde V(x)=\max\{V(x),|x|^2\}$ in $\mathbb R^N\setminus B(0,R_0)$ and consider a modified functional defined on the weighted Sobolev space:
$$H_\varepsilon=\Set{u\in H^1(\mathbb R^N) | \int_{\mathbb R^N }\widetilde V(\varepsilon x)u^2<\infty}.$$
Here we note that the functional $\int_{\mathbb R^N}u^2\log u^2$ is well-defined and $C^1$ smooth at any $u\in H_\varepsilon$ (see Lemma \ref{lem 2.1}).
We state the formula and some properties of the  functional in brief as follows. For each $u\in H_\varepsilon$, set
\begin{equation*}
\bar J_{\varepsilon}(u)=\frac{1}{2}\int_{\mathbb R^N}\big(|\nabla u|^2+\widetilde V(\varepsilon x)u^2\big)dx
-\frac{1}{2}\int_{\mathbb R^N}\big(u^2\log u^2-u^2\big)dx+\Psi_\varepsilon(u),
\end{equation*}
 where $\Psi_\varepsilon$ is a non-positive valued functional defined in \eqref{Psi} possessing the following properties:
 \begin{itemize}
 	\item Both $|\Psi_\varepsilon(u)|$ and $\|\Psi_\varepsilon'(u)\|_{H_\varepsilon^{-1}}$ are infinitesimal as $\varepsilon\to 0$ uniformly for $u\in H_\varepsilon$.

 \item If $u(x)\leq \exp\{-\varepsilon|x|^2\}$ in $\mathbb R^N\setminus B(0,R_0\varepsilon^{-1})$,
 then $\bar J_\varepsilon(u)=J_\varepsilon(u)$ and $\bar J'_\varepsilon(u)=J'_\varepsilon(u)$.
 \end{itemize}

With this modification, we can deal with a $C^1$ smooth problem defined on $H_\varepsilon$ which embeds to the Lebesgue space $L^q(\mathbb R^N)$ ($\frac{2N}{N+2}<q<2^*$)  compactly.
Moreover, by the first property of $\Psi_\varepsilon$, the modified functional $\bar J_\varepsilon$ possesses  mountain-pass or   linking structures for small $\varepsilon$. In this way, we can find a critical point when $\varepsilon$ is small.
 However, this critical point is not necessarily the solution for equation \eqref{eq2}.
 In order to recover the original problem \eqref{eq2}, it is important to obtain the localization and decay property for this candidate by the second property of $\Psi_\varepsilon$.
 To get over this problem, we
introduce another penalization on the
nonlinearity (essentially the penalization of del Pino and Felmer \cite{DelPino-Felmer1996,DelPino-Felmer1997})
and turn to the study of critical points of
$$\Gamma_\varepsilon(u)=\frac{1}{2}\int_{\mathbb R^N}\big(|\nabla u|^2+\widetilde V(\varepsilon x)u^2\big)dx
-\frac{1}{2}\int_{\mathbb R^N}F_\varepsilon(x,u)dx+\Psi_\varepsilon(u).
$$
See Section \ref{sec2} for the exact expression of  $F_\varepsilon(x,u)$.
The penalization argument can help   localize critical points of $\Gamma_\varepsilon$ around $\Omega_\varepsilon$.
Then the final step  to get a solution is to prove the desired decay property of the critical point for small $\varepsilon$.
 We would like to mention that the exponential decay estimate for solutions to logarithmic  Schr\"odinger equations is made in \cite{ITWZ,Wang-Zhang-2}.
   However, it is not applicable to recover our original problem
   by the second property of $\Psi_\varepsilon$.
   We will achieve a uniform Gaussian decay estimate for these critical points by making use of the properties of
   $\Psi_\varepsilon$ and  the singular nature of the logarithmic terms.
   With the decay estimate  established, we are able to obtain a solution for the original problem.
\begin{remark}
We note that in \cite{AdMF-18,AJ-19,AJ2}, the authors studied the existence of positive solutions to semiclassical logarithmic equation with bounded potential which possesses  global minima or global saddle points.
The results therein are confined on some global assumptions on the potential function.
We point out that  our assumption covers more general cases, especially at infinity the potential subjects to a very weak restriction.
\end{remark}

Our method is rather robust and works for more general situations.
As an extension, we consider logarithmic Schr\"odinger equation with competing potentials
 \begin{equation}\label{1.4}
 \begin{cases}
 -\varepsilon^2\Delta v+V(x)v=K(x)v\log v^2 \  \ &\text { in}\ \ \mathbb R^N,\\
 v(x)\to 0 \ \ &\text { as}\ \ |x|\to\infty.
 \end{cases}
\end{equation}
We note that semiclassical states for power-law type Schr\"odinger equation with competing potentials is first studied in \cite{WX}, where the existence of ground states as well as the  concentration behavior are proved for small $\varepsilon$. See \cite{AFM,BV2,Schaftingen2010} for more discussions on the power-law type Schr\"odinger equations with vanishing completing potentials. To state our result, we make the following assumptions:
 \begin{itemize}
 \item[(P1)] $V, K\in C(\mathbb R^N,\mathbb R)$, $K\in L^\infty(\mathbb R^N)$, $K(x)>0$, and there is a bounded domain
  $\Omega\subset {\mathbb{R}}^{N}$ such that
   $$P_0:=\min\limits_{x\in \Omega}P(x)<P_1:=\min\limits_{x\in \partial\Omega}P(x),\quad
   P(x):=|K(x)|^{-\frac{N}{2}+1}e^{\frac{V(x)}{K(x)}}.$$
 \item[(P2)] There exist $\mu\geq0$ and $\kappa\geq 0$ with $\frac{1}{2}\mu+\kappa\leq 1$ such that
      $$\liminf_{|x|\to\infty}V(x)|x|^{-\mu}>-\infty,\quad \liminf_{|x|\to\infty}K(x)|x|^{\kappa}>0.$$
 \end{itemize}
 Obviously, the set of minimum points for $P$ in $\Omega$
 $$\mathcal P=\set{ x \in \Omega |   P(x) =P_0}\neq\emptyset$$   is compact in $\Omega$.
 And we have the  following theorem.
 \begin{theorem}\label{th2}
 Let (P1) and (P2) hold. Then there exists $\varepsilon_0>0$ such that for all $\varepsilon\in(0,\varepsilon_0)$, equation \eqref{1.4} has a positive solution $v_\varepsilon$ satisfying
 \begin{itemize}
 	\item[(i)] for any $\delta>0$, there exist   $C,c>0$ such that
 $$v_\varepsilon(x)\leq Ce^{-c\varepsilon^{-2}(\text{dist}(x, \mathcal P^\delta))^{2-\kappa}}\quad \text{for}\quad x\in \mathbb R^N;$$
 \item[(ii)] up to a subsequence, there exist $\varepsilon_k\to 0$, $x_{\varepsilon_k}$ and $x_0\in \mathcal P$ with $x_{\varepsilon_k}\to x_0$
 such that
 $$v_{\varepsilon_k}(\varepsilon_k x+x_{\varepsilon_k})\to v(x)\quad \text{as}\ k\to \infty\ \text{strongly in}\ H^1(\mathbb R^N),
 $$  where
 $v(x)=\exp\{\frac{V(x_0)}{2K(x_0)}+\frac{N-K(x_0)|x|^2}2\}$ is the unique positive radial solution of
 $$-\Delta v+V(x_0)v= K(x_0)v\log v^2,\quad v\in H^1(\mathbb R^N).$$
 \end{itemize}
 \end{theorem}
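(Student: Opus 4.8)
The plan is to follow the variational scheme set up for Theorems \ref{th1.1} and \ref{th1.2}, adapted to the presence of the competing weight $K$ and the possibly weaker confinement exponent $\mu$. First I would rescale $u(x)=v(\varepsilon x)$, turning \eqref{1.4} into
\[
-\Delta u+V(\varepsilon x)u=K(\varepsilon x)u\log u^2\quad\text{in }\mathbb R^N,
\]
formally the Euler--Lagrange equation of $\mathcal J_\varepsilon(u)=\frac12\int_{\mathbb R^N}(|\nabla u|^2+V(\varepsilon x)u^2)-\frac12\int_{\mathbb R^N}K(\varepsilon x)(u^2\log u^2-u^2)$. Since (P2) only bounds $V$ below by $-C|x|^\mu$ near infinity, I would truncate $\widetilde V(x)=\max\{V(x),|x|^\mu\}$ outside $B(0,R_0)$ (with $\widetilde V\ge 1$ on $B(0,R_0)$ after the rescaling of Remark \ref{rek1.1}) and work on $H_\varepsilon=\{u\in H^1(\mathbb R^N):\int_{\mathbb R^N}\widetilde V(\varepsilon x)u^2<\infty\}$, which embeds compactly into $L^q(\mathbb R^N)$ for $\frac{2N}{N+2}<q<2^*$; the hypothesis $K\in L^\infty$ keeps the logarithmic term $\int K(\varepsilon x)u^2\log u^2$ well defined and $C^1$ on $H_\varepsilon$, exactly as in Lemma \ref{lem 2.1}. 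Then, as in Section \ref{sec2}, I would add the non-positive correction $\Psi_\varepsilon$ (vanishing as $\varepsilon\to0$, inactive on sufficiently decaying functions) together with the del Pino--Felmer penalization of the nonlinearity $K(\varepsilon x)(u^2\log u^2-u^2)$ outside $\Omega_\varepsilon$, producing a $C^1$ functional $\Gamma_\varepsilon$ on $H_\varepsilon$ whose critical points will solve \eqref{1.4} once shown to decay.

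The essential new ingredient is that the effective potential governing concentration is $P(x)=|K(x)|^{1-N/2}e^{V(x)/K(x)}$, not $V$. Indeed, near a point $x_0$ the formal limit equation is $-\Delta v+V(x_0)v=K(x_0)v\log v^2$, with unique positive radial solution $v(x)=\exp\{\frac{V(x_0)}{2K(x_0)}+\frac{N-K(x_0)|x|^2}2\}$; testing this equation with $v$ gives $\int(|\nabla v|^2+V(x_0)v^2)=K(x_0)\int v^2\log v^2$, whence the limit energy equals $\frac{K(x_0)}2\int v^2=\frac12\pi^{N/2}e^N P(x_0)$, a strictly increasing function of $P(x_0)$. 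Therefore the well condition $P_0<P_1$ in (P1) supplies, for small $\varepsilon$, the same min-max geometry for $\Gamma_\varepsilon$ that (V1) provides in Theorem \ref{th1.1}, with critical value $c_\varepsilon\to\frac12\pi^{N/2}e^N P_0$, realized by test functions built from the profiles above translated to points of $\mathcal P_\varepsilon$ and cut off. The compact embedding of $H_\varepsilon$ together with the two penalizations then yield boundedness and convergence of Palais--Smale sequences at level $c_\varepsilon$, hence a positive critical point $u_\varepsilon$ of $\Gamma_\varepsilon$.

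It remains, just as for Theorems \ref{th1.1}--\ref{th1.2}, to localize $u_\varepsilon$ and establish the decay estimate. For the localization, an energy comparison (the value $c_\varepsilon$ would be exceeded if a non-negligible portion of the mass sat where the del Pino--Felmer penalization is active) forces $u_\varepsilon$ to concentrate near $\mathcal P_\varepsilon$; there it solves the equation with $\widetilde V$, and passing to a subsequence $\varepsilon_k\to0$ and translating by a maximum point $x_{\varepsilon_k}$ with $x_{\varepsilon_k}\to x_0\in\mathcal P$ one obtains strong $H^1$ convergence of $u_{\varepsilon_k}(\,\cdot+x_{\varepsilon_k}/\varepsilon_k)$ to the stated profile $v$, which gives (ii). For (i) I would argue by comparison on the region where $u_\varepsilon$ is already small: there $\log u_\varepsilon^2$ is large and negative, so $u_\varepsilon$ is a subsolution of a linear equation whose zeroth-order coefficient is at least $c\,K(\varepsilon x)|\log u_\varepsilon^2|$, while $\exp\{-c\,\varepsilon^{-2}(\operatorname{dist}(x,\mathcal P^\delta))^{2-\kappa}\}$ is a supersolution because of the eikonal-type balance $|\nabla h|^2\sim 2K(x)h$ with $h\sim(\operatorname{dist}(x,\cdot))^{2-\kappa}$ and the lower bound $K(x)\gtrsim|x|^{-\kappa}$ from (P2) --- this is precisely where the exponent $2-\kappa$ and the compatibility condition $\frac12\mu+\kappa\le1$ (which keeps the confining term $\widetilde V(\varepsilon x)\ge|\varepsilon x|^\mu$ subordinate to the balance) enter. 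Once this decay holds, $\Psi_\varepsilon$ is inactive, $u_\varepsilon$ solves $-\Delta u+V(\varepsilon x)u=K(\varepsilon x)u\log u^2$, and $v_\varepsilon(x):=u_\varepsilon(x/\varepsilon)$ is the solution of \eqref{1.4} asserted in the theorem. I expect the decay step to be the main obstacle: obtaining the sharp exponent $2-\kappa$ and making the barrier argument uniform in $\varepsilon$ while $K$ decays and $\widetilde V$ grows requires a careful choice of comparison function.
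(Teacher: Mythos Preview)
Your overall strategy---rescale, truncate the potential, add $\Psi_\varepsilon$ and a del Pino--Felmer cutoff, identify the limit energy as $\tfrac12 P(x_0)|U|_2^2$, then localize and run a comparison argument to get the $(2-\kappa)$-decay---is the same route the paper takes, and your description of the decay step is accurate.

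There is, however, a genuine gap in your functional setup at the endpoint $\kappa=1$ (equivalently $\mu=0$), which is allowed by (P2). With $\widetilde V(x)=\max\{V(x),|x|^{\mu}\}$ and $\mu=0$, the weight $\widetilde V$ is merely bounded, so $H_\varepsilon=H^1(\mathbb R^N)$: the embedding into $L^q$ is \emph{not} compact, and for $q<2$ it fails altogether, so $\int K(\varepsilon x)(u^2\log u^2)^-$ need not be finite and $\Gamma_\varepsilon$ is not even well defined on $H_\varepsilon$. (More generally, when $0<\mu<2$ the embedding range is only $q\in(\tfrac{2N}{N+\mu},2^*)$, not down to $\tfrac{2N}{N+2}$ as you state; this is harmless for the log term as long as $\mu>0$, but breaks at $\mu=0$.) Your appeal to ``exactly as in Lemma \ref{lem 2.1}'' therefore does not go through in this case.

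The paper repairs this by introducing, for each $R\ge R_0$, an auxiliary space $H_{\varepsilon,R}$ carrying the extra quadratic confining weight $[(|x|-R/\varepsilon)^+]^2$ and a corresponding functional $\Gamma_{\varepsilon,R}$. On $H_{\varepsilon,R}$ the compact embedding and $C^1$-smoothness hold regardless of $\kappa$, so one obtains critical points $u_{\varepsilon,R}$ with energy and decay estimates \emph{uniform in $R$}. The Gaussian-type decay (your step (i), proved exactly via the barrier you describe) then forces the extra confining term to vanish as $R\to\infty$, and one passes to the limit $u_{\varepsilon,R}\to u_\varepsilon$ in $H_\varepsilon$ to recover a solution of the original equation. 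Once you insert this $R$-layer, the rest of your outline matches the paper's proof.
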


\begin{remark}
A vanishing potential $K$ in the nonlinearity cause additional difficulties in the proof of recovering the original problem since it affects the decay rate of the solution at infinity.
\end{remark}

We shall give more extensions.
i) We consider \eqref{1.1} with potential possessing a finite number of singular points, and prove the existence of nontrivial solutions which  concentrate around these singular points.
ii) If we make some assumptions on the derivatives of potential function on the boundary of the domain $\Omega$, we can obtain an unbounded sequence of  bound state solutions as $\varepsilon\to 0$.
These results, which generalize the results in \cite{ITWZ,Wang-Zhang-2} to the case where potential may be unbounded from below at infinity, will be given as an appendix.

The remainder of this paper is organized as follows. In Section \ref{sec2}, we define the auxiliary function, derive a variational setting for the problem, introduce the modified functional by a penalization approach  and give the solutions for the modified problem.
In Section 3, we prove that  if $V$ has a local minimum, the solutions for the modified problem are indeed solutions for the original equation when $\varepsilon$ is small.
In Section 4, we prove Theorem \ref{th1.2}.
   In Section 5, we generalize the result of Theorem \ref{th1.1} to  equations with competing potentials and prove   Theorem \ref{th2}.
   At last in Appendix, we give some more extensions on the results in Theorem 1.1.

\begin{notation} Throughout this paper,
$2^*=+\infty$ for $N=1,2$ and $2^*=\frac{2N}{N-2}$ for $N\geq 3$; $ L^p(\mathbb R^N) \ (1\leq p<+\infty)$ is the usual Lebesgue space with the norm $ |u| _p^p=\int_{\mathbb R^N}|u|^p;$
$ H^1(\mathbb R^N)$ denotes the Sobolev space with the norm $\|u\|^2=\int_{\mathbb R^N}(|\nabla u|^2+|u|^2);$
$o_n(1)$ (resp. $o_\varepsilon(1)$)
will denote a generic infinitesimal as $n\rightarrow \infty$ (resp. $\varepsilon\rightarrow 0^+$);  $B (x,\rho)$ denotes an open ball centered at $x\in\mathbb R^N$ with radius
$\rho>0$.
$a^\pm=\max\{0,\pm a\}$ for $a\in\mathbb R$.
 Unless stated otherwise, $\delta$ and $C$ are general constants.
\end{notation}

\section{Modified problem and preliminaries}\label{sec2}
Throughout this section, we assume that $V\in C(\mathbb R^N)$ satisfies (V0)  and there are bounded domain $\Omega$ and $R_0>0$ such that $0\in \Omega \subset B(0,R_0/2)$  and $V\geq 1$ in $B(0, R_0)$.

We first define the auxiliary function.
Fixing $\phi_\varepsilon(x)=\exp\{-\varepsilon|x|^2\}$, we set
\begin{equation}\label{eta}
\eta_\varepsilon(x,t)=
\left\{
\begin{aligned}
&0,\  &t&\in [0, \phi_\varepsilon(x)]\cup [5\phi_\varepsilon(x),\infty);\\
&-\frac{(t-\phi_\varepsilon(x))^2}{4\phi_\varepsilon^3(x)},
\ & t&\in[\phi_\varepsilon(x), 2\phi_\varepsilon(x)];\\
&\frac{(t-3\phi_\varepsilon(x))^2}{4\phi_\varepsilon^3(x)}-\frac{1}{2\phi_\varepsilon(x)},
\ & t&\in[2\phi_\varepsilon(x), 4\phi_\varepsilon(x)];\\
&-\frac{(t-5\phi_\varepsilon(x))^2}{4\phi_\varepsilon^3(x)},
\ & t&\in [4\phi_\varepsilon(x), 5\phi_\varepsilon(x)].
\end{aligned}
\right.
\end{equation}
It is clear that $\eta_\varepsilon(x,t)$ is  $C^1$ continuous in  $t\in[0,\infty)$ and
\begin{equation}
\begin{split}
|\eta_\varepsilon'(x,t)|\leq   \frac12 \phi_\varepsilon(x)^{-2},
 \end{split}
\end{equation}
where $\eta_\varepsilon'(x,t)$ denotes the partial derivative of $\eta_\varepsilon(x,t)$ relative to $t$.
 Hence
 $\widehat\eta_\varepsilon(x,t)$ defined blow is  $C^2$ in $t\in[0,\infty)$:
\begin{equation}\label{hateta}
\widehat\eta_\varepsilon(x,t):=1+\int_0^t\eta_\varepsilon(x,s)ds.
\end{equation}
Moreover, $\widehat\eta_\varepsilon\in [0,1]$, $\widehat\eta_\varepsilon(x,t)=1$ for $0\leq t\leq \phi_\varepsilon(x)$ and $\widehat\eta_\varepsilon(x,t)=0$ for $t\geq 5\phi_\varepsilon(x)$.
To penalize the nonlinearity, we also introduce  functions
\begin{equation*}\label{g}
 g(s)=
\begin{cases}
2e^{-1}, \ &s\leq -e^{-1},\\
s\log s^2,\ & |s| \leq e^{-1},\\
-2e^{-1}, \ &s\geq e^{-1},
\end{cases}
\ \ \text{and} \  \ G(s)=\int_{0}^sg(t)dt
=\begin{cases}
2e^{-1}s+\frac12e^{-2}, \ &s\leq -e^{-1},\\
\frac12s^2\log s^2-\frac12s^2,\ & |s| \leq e^{-1},\\
-2e^{-1}s+\frac12e^{-2}, \ &s\geq e^{-1}.
\end{cases}
\end{equation*}
We note that $g\in C(\mathbb R)\cap C^1(\mathbb R\setminus\{0\})$ is odd and
$G\in C^1( \mathbb R )\cap C^2(\mathbb R\setminus\{0\})$ is even.
The following lemma gives some direct properties of the auxiliary functions which will be used frequently in the subsequent argument.
\begin{lemma}\label{lem etaq}
For $x\in\mathbb R^N$ and $s\in\mathbb R$, the following statements hold
\begin{itemize}
\item[(i)] $\widehat\eta_\varepsilon(x,|s|)s^2\leq 25\phi_\varepsilon^2(x)$,
$\big|\eta_\varepsilon(x,|s|)s^3\big|\leq 125\phi_\varepsilon^2(x)$;
\item[(ii)] $\widehat\eta_\varepsilon(x,|s|)|s|\leq 5\phi_\varepsilon(x)$,
 $\big|\eta_\varepsilon(x,|s|)s^2\big|\leq 25\phi_\varepsilon(x)$, $\big|\eta_\varepsilon'(x,|s|)s^3\big|\leq 125\phi_\varepsilon(x)$;
\item[(iii)] $g(s)\leq \min\{s\log s^2, -2e^{-1}\}$ if $s\geq0$, $-\frac12(s^2\log s^2)^--2s^2\leq G(s)\leq0$ if $s\in \mathbb R$;
 \item[(iv)] $g(s)=s\log s^2$ and
 $G(s)=\frac{1}{2}s^2\log s^2-\frac{1}{2}s^2$ if $|s|\leq e^{-1}$;
 \item[(v)] $g(s)s-2G(s)\in C^1(\mathbb R)$ is such that  $0\leq g(s)s-2G(s)\leq s^2$ if $|s|\geq e^{-1}$ and
 $g(s)s-2G(s)= s^2$ if $|s|\leq e^{-1}$.
\end{itemize}
\end{lemma}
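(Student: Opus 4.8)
The plan is to verify the five items separately; the argument throughout is elementary, relying only on the explicit piecewise formulas and the support properties of the auxiliary functions.

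For (i) and (ii), I would first record the facts on which everything rests: $\eta_\varepsilon(x,\cdot)$, and hence $\eta_\varepsilon'(x,\cdot)$ as well, vanish outside $[\phi_\varepsilon(x),5\phi_\varepsilon(x)]$; $\widehat\eta_\varepsilon(x,\cdot)$ takes values in $[0,1]$ and vanishes for $t\ge 5\phi_\varepsilon(x)$; $|\eta_\varepsilon'(x,t)|\le \tfrac12\phi_\varepsilon(x)^{-2}$, as already noted; and, reading off \eqref{eta}, $|\eta_\varepsilon(x,t)|\le\tfrac12\phi_\varepsilon(x)^{-1}$ for every $t\ge 0$. Each of the six estimates then follows by bounding the relevant product by the supremum of the $\widehat\eta_\varepsilon$-, $\eta_\varepsilon$- or $\eta_\varepsilon'$-factor times the value of the power $|s|^k$ at the right endpoint $|s|=5\phi_\varepsilon(x)$ of the support: for instance $\widehat\eta_\varepsilon(x,|s|)s^2\le(5\phi_\varepsilon(x))^2$, $|\eta_\varepsilon(x,|s|)s^3|\le\tfrac12\phi_\varepsilon(x)^{-1}(5\phi_\varepsilon(x))^3$ and $|\eta_\varepsilon'(x,|s|)s^3|\le\tfrac12\phi_\varepsilon(x)^{-2}(5\phi_\varepsilon(x))^3$, with the remaining three handled the same way. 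The constants in the statement are not sharp and absorb this slack with room to spare.

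For (iii)–(v) I would split $\mathbb R$ into the ranges $|s|\le e^{-1}$ and $|s|\ge e^{-1}$ and substitute the explicit formulas for $g$ and $G$. On $|s|\le e^{-1}$ one gets $g(s)s-2G(s)=s^2\log s^2-(s^2\log s^2-s^2)=s^2$, which is the equality in (iv) and (v), and $G(s)=\tfrac12 s^2\log s^2-\tfrac12 s^2\le 0$, while $-\tfrac12(s^2\log s^2)^--2s^2=\tfrac12 s^2\log s^2-2s^2\le \tfrac12 s^2\log s^2-\tfrac12 s^2=G(s)$, which gives (iii) on this range. On $s\ge e^{-1}$ one computes $g(s)s-2G(s)=2e^{-1}s-e^{-2}$, so that $0\le 2e^{-1}s-e^{-2}$ is just $s\ge \tfrac12 e^{-1}$ and $2e^{-1}s-e^{-2}\le s^2$ is the perfect square $(s-e^{-1})^2\ge 0$; similarly $G(s)=-2e^{-1}s+\tfrac12 e^{-2}\le 0$ and $G(s)\ge -2s^2\ge -\tfrac12(s^2\log s^2)^--2s^2$, the first of these being $2(s-\tfrac12 e^{-1})^2\ge 0$ and the second just $(s^2\log s^2)^-\ge 0$. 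The first inequality of (iii), for $s\ge 0$, I would handle by comparing $g$ with $s\mapsto s\log s^2$ on $[0,e^{-1}]$, where the two agree, and on $[e^{-1},\infty)$, where $g\equiv -2e^{-1}$, using that $s\mapsto s\log s^2$ is nonpositive on $[0,1]$ and attains its minimum $-2e^{-1}$ over $s\ge 0$ at $s=e^{-1}$; this pins $g(s)$ between $-2e^{-1}$ and $s\log s^2$ for $s\ge 0$. The remaining range $s\le -e^{-1}$ follows at once from the oddness of $g$ and the evenness of $G$, of $(s^2\log s^2)^-$ and of $s^2$, which make $g(s)s-2G(s)$ even and the $G$-bounds symmetric.

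Finally, for the $C^1$-regularity of $s\mapsto g(s)s-2G(s)$ asserted in (v): on $(-e^{-1},e^{-1})$ it equals $s^2$ and is smooth there (so $s=0$ needs no separate attention), while on $(e^{-1},\infty)$ and $(-\infty,-e^{-1})$ it is affine; at the junctions $s=\pm e^{-1}$ one checks that the one-sided derivatives of the adjacent pieces $s^2$ and $2e^{-1}s-e^{-2}$ (respectively $-2e^{-1}s-e^{-2}$) agree, both equalling $2e^{-1}$ (respectively $-2e^{-1}$), with the values matching at $e^{-2}$. I do not expect a genuine obstacle anywhere; the only points requiring a moment's care are the two perfect-square inequalities behind the bounds on $G$ and on $g(s)s-2G(s)$, and the matching of one-sided derivatives at $s=\pm e^{-1}$ for the $C^1$ claim.
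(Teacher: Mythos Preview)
The paper does not supply a proof of this lemma; it is presented as a list of ``direct properties'' of the explicitly defined auxiliary functions, and your piecewise verification is exactly the intended (and only reasonable) argument. Your treatment of (i), (ii), (iv), (v), and the $G$-bounds in (iii) is correct.

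One remark on the first inequality in (iii): read literally, $g(s)\le\min\{s\log s^2,-2e^{-1}\}$ for $s\ge 0$ would force $g(s)\le -2e^{-1}$, which fails near $s=0$ since $g(0)=0$. What you actually prove---that $-2e^{-1}\le g(s)\le s\log s^2$ for $s\ge 0$---is the correct pair of bounds, and it is the inequality $g(s)\le s\log s^2$ that the paper uses downstream (e.g.\ in Remark~\ref{rek2.9}(i)). Your phrasing ``this pins $g(s)$ between $-2e^{-1}$ and $s\log s^2$'' is therefore the right conclusion; the discrepancy is a typo in the stated lemma, not a gap in your argument.
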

Let $\widetilde V:\mathbb R^N\rightarrow[1,+\infty)$ be a  function such that
\begin{equation}\label{tildev}
\widetilde V(x)=
\begin{cases}
V(x),\ &|x|< R_0;\\
\max\{V(x),|x|^2\}, \ &|x|\geq R_0.
\end{cases}
\end{equation}
For $\widetilde V_\varepsilon(x)=\widetilde V(\varepsilon x)$, $V_\varepsilon(x)=V(\varepsilon x)$ and
$\overline V_\varepsilon =V_\varepsilon -\widetilde V_\varepsilon$,
 we define $\Psi_\varepsilon$ by
\begin{equation}\label{Psi}
\Psi_\varepsilon(u)= \frac{1}{2}\int_{\mathbb R^N} \overline V_\varepsilon (x) \widehat\eta_\varepsilon(x,|u|)u^2dx.
\end{equation}
Note that by Lemma \ref{lem etaq} (ii) and \eqref{Psi}, $\Psi_\varepsilon$ is well-defined on the Hilbert space
\begin{equation}\label{H}
H_\varepsilon:=\Set{u\in H^1(\mathbb R^N) | \int_{\mathbb R^N}\widetilde V_\varepsilon(x)u^2dx<\infty},
\end{equation}
with inner product $(u,v)_\varepsilon:=\int_{\mathbb R^N}\nabla u \nabla v+\widetilde V_\varepsilon (x)uv$ and norm $\|u\|_\varepsilon:=\sqrt{(u,u)_\varepsilon}$.
 Moreover, for any $u,v\in H_\varepsilon$, there holds
\begin{equation}\label{2.1}
\Psi_\varepsilon'(u)v =\int_{\mathbb R^N} \overline V_\varepsilon (x)\big(\frac{1}{2}\eta_\varepsilon(x,|u|)|u|u+\widehat\eta_\varepsilon(x,|u|)u\big)vdx.
\end{equation}

\begin{corollary}\label{cor2.3}
 For some $C,c>0$ independent of $\varepsilon$, there holds
\begin{align*}
\sup_{u\in H_\varepsilon}|\Psi_\varepsilon(u)|+
 \sup_{u\in H_\varepsilon}\|\Psi_\varepsilon'(u)\|_{H_\varepsilon^{-1}}+\sup_{u\in H_\varepsilon}\|\Psi_\varepsilon''(u)u\|_{H_\varepsilon^{-1}}\leq Ce^{-c\varepsilon^{-1}},
\end{align*}
where  $\|\cdot\|_{H_\varepsilon^{-1}}$denotes the norm on the dual space of $H_\varepsilon$.
\end{corollary}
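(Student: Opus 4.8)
The plan is to reduce all three quantities to one scalar Gaussian estimate, using only the pointwise bounds of Lemma \ref{lem etaq} and the fact that the weight $\overline V_\varepsilon$ is supported away from the origin, where $\phi_\varepsilon$ is already exponentially small.

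\textbf{Step 1: locating and bounding $\overline V_\varepsilon$.} First I note that $\overline V_\varepsilon(x)=V(\varepsilon x)-\widetilde V(\varepsilon x)$ vanishes on $B(0,R_0\varepsilon^{-1})$ and is nonpositive everywhere; on its support $\{|x|\ge R_0\varepsilon^{-1}\}$ we have $\widetilde V(\varepsilon x)=\max\{V(\varepsilon x),|\varepsilon x|^2\}$, so $|\overline V_\varepsilon(x)|=(|\varepsilon x|^2-V(\varepsilon x))^+$. By (V0) together with the continuity of $V$ there is $C_1>0$ with $V(y)\ge -C_1|y|^2$ for $|y|\ge R_0$, hence
$$|\overline V_\varepsilon(x)|\le (1+C_1)\,\varepsilon^2|x|^2\qquad\text{on }\{|x|\ge R_0\varepsilon^{-1}\}.$$
Everything will be controlled by the two $u$-independent quantities
$$A_\varepsilon:=\int_{\mathbb R^N}|\overline V_\varepsilon|\,\phi_\varepsilon^2\,dx,\qquad B_\varepsilon:=\int_{\mathbb R^N}\overline V_\varepsilon^2\,\phi_\varepsilon^2\,dx .$$

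\textbf{Step 2: reducing the three norms to $A_\varepsilon,B_\varepsilon$.} For $\Psi_\varepsilon$ itself, Lemma \ref{lem etaq}(i) gives $\widehat\eta_\varepsilon(x,|u|)u^2\le 25\phi_\varepsilon^2(x)$, so $|\Psi_\varepsilon(u)|\le \frac{25}{2}A_\varepsilon$ for every $u\in H_\varepsilon$. For $\Psi_\varepsilon'$, from \eqref{2.1} and the bounds $\widehat\eta_\varepsilon(x,|u|)|u|\le 5\phi_\varepsilon$, $|\eta_\varepsilon(x,|u|)u^2|\le 25\phi_\varepsilon$ of Lemma \ref{lem etaq}(ii), the integrand of $\Psi_\varepsilon'(u)v$ is pointwise bounded by $\tfrac{35}{2}\phi_\varepsilon(x)|v|$; writing $\phi_\varepsilon|v|=(\phi_\varepsilon/\sqrt{\widetilde V_\varepsilon})(\sqrt{\widetilde V_\varepsilon}|v|)$, using $\widetilde V_\varepsilon\ge 1$ and Cauchy--Schwarz yields $|\Psi_\varepsilon'(u)v|\le \tfrac{35}{2}B_\varepsilon^{1/2}\|v\|_\varepsilon$, i.e. $\|\Psi_\varepsilon'(u)\|_{H_\varepsilon^{-1}}\le \tfrac{35}{2}B_\varepsilon^{1/2}$. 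For $\Psi_\varepsilon''(u)u$, I would differentiate \eqref{2.1} once more in $u$, using $\partial_t\widehat\eta_\varepsilon(x,t)=\eta_\varepsilon(x,t)$, which gives
$$(\Psi_\varepsilon''(u)u)v=\int_{\mathbb R^N}\overline V_\varepsilon(x)\Big(\tfrac12\eta_\varepsilon'(x,|u|)|u|\,u^2+2\eta_\varepsilon(x,|u|)|u|\,u+\widehat\eta_\varepsilon(x,|u|)u\Big)v\,dx$$
(up to harmless sign bookkeeping near the zero set of $u$, where everything is smooth since $\widehat\eta_\varepsilon\equiv1$ there); the three terms in the bracket are bounded by $\tfrac{125}{2}\phi_\varepsilon$, $50\phi_\varepsilon$ and $5\phi_\varepsilon$ respectively, again by Lemma \ref{lem etaq}(ii), so the same Cauchy--Schwarz step gives $\|\Psi_\varepsilon''(u)u\|_{H_\varepsilon^{-1}}\le CB_\varepsilon^{1/2}$.

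\textbf{Step 3: the scalar estimate.} It remains to show $A_\varepsilon,B_\varepsilon\le Ce^{-c\varepsilon^{-1}}$. On the support of $\overline V_\varepsilon$ one has $\varepsilon|x|^2\ge R_0^2\varepsilon^{-1}$, hence $\phi_\varepsilon(x)\le e^{-R_0^2\varepsilon^{-1}}$; writing $\phi_\varepsilon^2=\phi_\varepsilon\cdot\phi_\varepsilon$ and pulling out one factor,
$$A_\varepsilon\le e^{-R_0^2\varepsilon^{-1}}\!\!\int_{\mathbb R^N}\!|\overline V_\varepsilon|\,\phi_\varepsilon\,dx\le (1+C_1)e^{-R_0^2\varepsilon^{-1}}\!\!\int_{\mathbb R^N}\!\varepsilon^2|x|^2e^{-\varepsilon|x|^2}dx,$$
and the substitution $y=\sqrt\varepsilon\,x$ turns the last integral into $\varepsilon^{1-N/2}\int_{\mathbb R^N}|y|^2e^{-|y|^2}dy=C_N\varepsilon^{1-N/2}$; similarly $B_\varepsilon\le C\varepsilon^{2-N/2}e^{-R_0^2\varepsilon^{-1}}$. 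Since $\varepsilon^{-m}e^{-R_0^2\varepsilon^{-1}}\le C_{m,c}e^{-c\varepsilon^{-1}}$ for every $c<R_0^2$, combining with Step 2 gives the claim with any $c<R_0^2/2$, uniformly in $u\in H_\varepsilon$.

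I do not expect a genuine obstacle: the whole argument is soft and driven by the Gaussian cutoff $\phi_\varepsilon$. The only points requiring care are the second–derivative computation in Step 2 (legitimate because $\widehat\eta_\varepsilon$ is $C^2$ and identically $1$ near $t=0$, so $s\mapsto\widehat\eta_\varepsilon(x,|s|)s^2$ is $C^2$ on $\mathbb R$) and, more importantly, the Cauchy--Schwarz reduction to the $H_\varepsilon^{-1}$ norm: one must split against the weight $\widetilde V_\varepsilon$ and use $\widetilde V_\varepsilon\ge 1$, so that the factor $\overline V_\varepsilon$ — which is large precisely where $\phi_\varepsilon$ is tiny — is absorbed without any loss.
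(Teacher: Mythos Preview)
Your proof is correct and follows essentially the same route as the paper's: reduce via the pointwise bounds of Lemma~\ref{lem etaq} and the support property of $\overline V_\varepsilon$ to a single Gaussian integral of the type $\int_{|x|\ge R_0\varepsilon^{-1}}|x|^2\phi_\varepsilon^2\le Ce^{-c\varepsilon^{-1}}$. You simply supply more detail---the explicit Cauchy--Schwarz splitting against $\sqrt{\widetilde V_\varepsilon}\,|v|$ to reach the $H_\varepsilon^{-1}$ norm, and the second-derivative formula---than the paper's one-line proof, but the argument is the same.
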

\begin{proof} By \eqref{Psi}, \eqref{2.1} and Lemma \ref{lem etaq}, it suffices to notice that
\begin{equation}\label{2.4}
\int_{| x|\geq R_0\varepsilon^{-1}} |x|^2\phi_\varepsilon^2\leq Ce^{-c\varepsilon^{-1}}
\end{equation}
holds for some $C,c>0$ independent of $\varepsilon$.
\end{proof}
Let $\chi$ be the characteristic function of $\mathbb R^N \setminus \Omega$ and set $\chi_\varepsilon(x)=\chi(\varepsilon x)$. Denote
\begin{align*}
f_\varepsilon(x,s)&:=(1-\chi_\varepsilon(x))s\log s^2+\chi_\varepsilon(x)g(s)=(1-\chi_\varepsilon(x))(s\log s^2-g(s))+g(s),\\
F_\varepsilon(x,s)&:=\int_0^sf_\varepsilon(x,t)dt=\frac12(1-\chi_\varepsilon(x))(s^2\log s^2-s^2)+\chi_\varepsilon(x)G(s).
\end{align*}
Define the functional:
\begin{equation}\label{2.3}
\Gamma_{\varepsilon}(u)=\frac{1}{2}\|u\|_\varepsilon^2+\Psi_\varepsilon(u)
-\int_{\mathbb R^N}F_\varepsilon(x,u),\quad u\in H_\varepsilon.
\end{equation}
We note that $\Gamma_{\varepsilon}$ is well defined and of class $C^1$ on $H_\varepsilon$,
which follows from the following lemma due to $\widetilde V(x)\geq |x|^2$ for $|x|\geq R_0$:

\begin{lemma}\label{lem 2.1}
For any $q\in (\frac{2N}{N+2},2)$, there exists a constant $C>0$ independent of $\varepsilon\in(0,1)$, such that
 $|u|_q\leq C\varepsilon^{(\frac12-\frac1q)N}\|u\|_\varepsilon$, $u\in H_\varepsilon$. Moreover, $H_\varepsilon$ embeds compactly into $L^q(\mathbb R^N)$ for any $q\in(\frac{2N}{N+2},2^*)$. In particular, $u_n\rightharpoonup u$ weakly in $H_\varepsilon$ implies that
$$\int_{\mathbb R^N}u_n^2\log u_n^2\to \int_{\mathbb R^N}u^2\log u^2,\quad \int_{\mathbb R^N}G(u_n)\to \int_{\mathbb R^N}G(u), \quad\text{as}\ n\to\infty.$$
\end{lemma}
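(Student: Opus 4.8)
The plan is to establish the three assertions of Lemma \ref{lem 2.1} in order: first the Sobolev-type inequality with the $\varepsilon$-dependent constant, then the compact embedding, and finally the continuity of the logarithmic and $G$-functionals along weakly convergent sequences.

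\textbf{Step 1: the weighted inequality $|u|_q\le C\varepsilon^{(\frac12-\frac1q)N}\|u\|_\varepsilon$.}
First I would split $\mathbb R^N=B(0,R_0\varepsilon^{-1})\cup(\mathbb R^N\setminus B(0,R_0\varepsilon^{-1}))$. On the ball, since $\widetilde V_\varepsilon\ge 1$, I have $\|u\|_{H^1(B_{R_0\varepsilon^{-1}})}\le\|u\|_\varepsilon$; the ordinary Sobolev embedding $H^1\hookrightarrow L^q$ for $q\in(\tfrac{2N}{N+2},2)$ (note $q<2<2^*$, and $q>\tfrac{2N}{N+2}$ makes the dual exponent subcritical) gives $\|u\|_{L^q(B_{R_0\varepsilon^{-1}})}\le C\|u\|_\varepsilon$ with $C$ \emph{independent} of the radius because the embedding constant of $H^1(\mathbb R^N)\hookrightarrow L^q$ for $q\le 2^*$ does not blow up on balls. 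Wait --- this does not yet produce the factor $\varepsilon^{(\frac12-\frac1q)N}$; the gain comes from the weight at infinity. On $\mathbb R^N\setminus B(0,R_0\varepsilon^{-1})$ we have $\widetilde V_\varepsilon(x)\ge|\varepsilon x|^2\ge R_0^2$, so by H\"older with exponents $2/q$ and $2/(2-q)$,
\begin{equation*}
\int_{|x|\ge R_0\varepsilon^{-1}}|u|^q
=\int |u|^q|\varepsilon x|^{\frac{Nq}{2}\cdot\frac{2}{N}\cdot(\ldots)}\cdots,
\end{equation*}
and more cleanly: writing $|u|^q=\big(\widetilde V_\varepsilon u^2\big)^{q/2}\widetilde V_\varepsilon^{-q/2}\le\big(\widetilde V_\varepsilon u^2\big)^{q/2}(\varepsilon^2|x|^2)^{-q/2}$ on that region, H\"older yields
\begin{equation*}
\int_{|x|\ge R_0\varepsilon^{-1}}|u|^q\le\Big(\int\widetilde V_\varepsilon u^2\Big)^{q/2}\Big(\int_{|x|\ge R_0\varepsilon^{-1}}(\varepsilon^2|x|^2)^{-\frac{q}{2-q}}\Big)^{(2-q)/2},
\end{equation*}
and since $q>\tfrac{2N}{N+2}$ forces $\tfrac{q}{2-q}>\tfrac N2$, i.e. $\tfrac{2q}{2-q}>N$, the integral $\int_{|x|\ge R_0\varepsilon^{-1}}|x|^{-2q/(2-q)}$ converges; a scaling computation shows it equals $c_{N,q}\,(R_0\varepsilon^{-1})^{N-2q/(2-q)}$, and collecting the $\varepsilon$-powers ($\varepsilon^{-q}$ from the weight, $\varepsilon^{-(N-2q/(2-q))\cdot(2-q)/2}$ from the tail) reduces, after arithmetic, to the claimed $\varepsilon^{(\frac12-\frac1q)N}$. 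On the ball the same scaling argument (or a direct estimate using $\widetilde V_\varepsilon\ge R_0^2$ fails since the ball grows, so here one uses $\widetilde V_\varepsilon\ge 1$ and the volume scaling via H\"older $|u|_{L^q(B_{R_0\varepsilon^{-1}})}\le|B_{R_0\varepsilon^{-1}}|^{\frac1q-\frac12}|u|_{L^2}\le C\varepsilon^{-N(\frac1q-\frac12)}\|u\|_\varepsilon$, giving exactly the same factor). Combining the two regions gives the inequality.

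\textbf{Step 2: compact embedding into $L^q(\mathbb R^N)$ for $q\in(\tfrac{2N}{N+2},2^*)$.}
Take a bounded sequence $u_n$ in $H_\varepsilon$, so (up to a subsequence) $u_n\rightharpoonup u$ in $H^1(\mathbb R^N)$ and strongly in $L^q_{\mathrm{loc}}$ for every $q<2^*$. Tightness of the tails is supplied by the weight exactly as in Step~1: for $R$ large, on $|x|\ge R$ one has $\widetilde V_\varepsilon(x)\ge\varepsilon^2|x|^2\ge\varepsilon^2 R^2$, so $\int_{|x|\ge R}u_n^2\le(\varepsilon R)^{-2}\|u_n\|_\varepsilon^2\to 0$ uniformly in $n$ as $R\to\infty$, and interpolating this with the uniform $L^{2^*}$ bound (from $H^1$) controls the $L^q$-tail for $q\in[2,2^*)$, while for $q\in(\tfrac{2N}{N+2},2)$ one uses directly the tail estimate of Step~1 applied to $u_n-u$ on $|x|\ge R$. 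Hence $u_n\to u$ in $L^q(\mathbb R^N)$.

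\textbf{Step 3: continuity of $\int u^2\log u^2$ and $\int G(u)$ under weak convergence.}
Using the splitting $\tfrac12 s^2\log s^2 = G(s)+\tfrac12(s^2\log s^2-2G(s))$ and Lemma \ref{lem etaq}(v), which says $g(s)s-2G(s)=s^2$ for $|s|\le e^{-1}$ and $0\le s^2\log s^2 - \text{(bounded)}$ type estimates hold, it is enough to treat $\int G(u_n)\to\int G(u)$ and $\int(u_n^2\log u_n^2-2G(u_n))\to\int(u^2\log u^2-2G(u))$ separately. For the first: $G$ is continuous with $|G(s)|\le C(|s|^q+|s|^2)$ pinned down by Lemma \ref{lem etaq}(iii) (the logarithmic singularity near $0$ is dominated by $|s|^q$ with $q<2$, and for $|s|\ge e^{-1}$, $|G(s)|\le C|s|^2\le C|s|^{2^*-\epsilon}$), so strong $L^q$ and $L^{2^*-\epsilon}$ convergence plus a standard Vitali/dominated-convergence argument gives convergence of $\int G(u_n)$. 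For the second term, $s^2\log s^2-2G(s)=s\,g(s)-2G(s)+s^2\log s^2-s g(s)$; by Lemma \ref{lem etaq}(iv)--(v) this is a function that grows like $s^2\log s^2$ only for $|s|$ large, where $|s^2\log s^2|\le C_\epsilon|s|^{2^*-\epsilon}$ near infinity and is harmless near $0$, so again $L^q\cap L^{2^*-\epsilon}$ convergence from Step~2 yields the limit.

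\textbf{Main obstacle.} The delicate point is the \emph{uniformity in $\varepsilon$} of the constant in Step~1 and the correct bookkeeping of $\varepsilon$-powers: one must be careful that the condition $q>\tfrac{2N}{N+2}$ is exactly what makes the exponent $\tfrac{2q}{2-q}>N$ so that the tail integral $\int_{|x|\ge R_0\varepsilon^{-1}}|x|^{-2q/(2-q)}$ both converges and scales to produce precisely $\varepsilon^{(\frac12-\frac1q)N}$ after combining with the weight factor; a sign error in the exponent arithmetic would be fatal. The rest (Steps 2--3) is a routine combination of Lemma \ref{lem etaq}, the tail decay from the weight, and the $H^1$ Sobolev embedding.
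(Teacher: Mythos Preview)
Your proposal is correct and follows essentially the same route as the paper. In Step~1 the paper does exactly your final argument: on $B(0,R_0\varepsilon^{-1})$ use H\"older with the volume factor $|B_{R_0\varepsilon^{-1}}|^{\frac1q-\frac12}$ against $|u|_2\le\|u\|_\varepsilon$, and on the complement use H\"older against the weight $|\varepsilon x|^2\le\widetilde V_\varepsilon$, the condition $q>\tfrac{2N}{N+2}$ being precisely what makes $\int_{|x|\ge R_0\varepsilon^{-1}}|\varepsilon x|^{-2q/(2-q)}$ converge with the right $\varepsilon$-power. For Steps~2 and~3 the paper is far terser than you: it simply asserts that the compact embedding follows from $\widetilde V_\varepsilon(x)\to+\infty$ and does not spell out the Nemytskii continuity argument; your tail-tightness plus local Rellich argument and the growth bound $|s^2\log s^2|+|G(s)|\le C(|s|^q+|s|^p)$ with $q\in(\tfrac{2N}{N+2},2)$, $p\in(2,2^*)$ are exactly the standard details one would fill in.
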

\begin{proof}
For $q\in (\frac{2N}{N+2},2)$ and $u\in H_\varepsilon$, we have
$$\begin{aligned}
\int_{\mathbb R^N} |u|^q &=\int_{|x|\leq R_0\varepsilon^{-1}} |u|^q+\int_{|x|\geq R_0\varepsilon^{-1}} |u|^q\\
&\leq C\varepsilon^{\frac{q-2}2N}|u|_2^q+
\Big(\int_{|x|\geq R_0\varepsilon^{-1}} |\varepsilon x|^2|u|^2\Big)^\frac{q}2\Big(\int_{|x|\geq R_0\varepsilon^{-1}} |\varepsilon x|^{-\frac{2q}{2-q}}\Big)^\frac{2-q}2\\
&\leq C\varepsilon^{\frac{q-2}2N}\|u\|_\varepsilon^q.
\end{aligned}
$$
Then it follows that $|u|_q\leq C\varepsilon^{(\frac12-\frac1q)N}\|u\|_\varepsilon$. And the compact embedding holds since $\widetilde V_\varepsilon(x)\to+\infty$ as $|x|\to\infty$.
\end{proof}
We note that any critical point of $\Gamma_\varepsilon$ is a solution to
\begin{equation}\label{eq2.8}
-\Delta u+T(x,u)u= f_\varepsilon(x,u),\quad x\in \mathbb R^N,
\end{equation}
where
$$T(x,u)=\widetilde V_\varepsilon(x)+\overline V_\varepsilon (x)\big(\frac{1}{2}\eta_\varepsilon(x,|u|)|u|+\widehat\eta_\varepsilon(x,|u|)\big).
$$

\begin{remark}\label{rek2.9}
(i) By Kato's inequality, any solution $u\in H_\varepsilon$ to \eqref{eq2.8}
weakly solves
\begin{equation}\label{eq2.13}
-\Delta |u|+|u|+\overline V_\varepsilon (x)\widehat\eta_\varepsilon(x,|u|)|u|\leq |u|\log u^2,\quad x\in \mathbb R^N,
\end{equation}
since $\widetilde V\geq 1$, $\overline V\leq 0$, $\eta_\varepsilon\leq 0$ and $f_\varepsilon(x,s)\leq s\log s^2$ for $s\geq 0$.
Furthermore, by the definition of $\overline V$ and Lemma \ref{lem etaq} (ii),
$\overline V_\varepsilon (x)\widehat\eta_\varepsilon(x,s)=0$ if $|\varepsilon x|< R_0$ and
$|\overline V_\varepsilon (x)\widehat\eta_\varepsilon(x,s)s|\leq C e^{-c\varepsilon|x|^2}$ if $|\varepsilon x|\geq R_0$ for  some $c, C>0$ independent of $\varepsilon$.
So for some $p\in(2,2^*)$, $u$ solves
\begin{equation}\label{eq2.9}
-\Delta |u|+|u| \leq C(|u|^{p-1}+\mathbf{1}_{R_0,\varepsilon}e^{-c\varepsilon |x|^2}),\quad x\in \mathbb R^N,
\end{equation}
where $\mathbf{1}_{R_0,\varepsilon}$ denotes the characteristic function of
$\mathbb R^N\setminus B(0,R_0\varepsilon^{-1})$.

(ii) For a family of $\{w_\varepsilon\}\subset H_\varepsilon$ with $\|w_\varepsilon\|_\varepsilon\geq \varepsilon^2$ satisfying $\Gamma_\varepsilon'(w_\varepsilon)w_\varepsilon=0$, it is standard to show that
\begin{equation}\label{eq3.14}
\liminf_{\varepsilon\to 0}\|w_\varepsilon\|_{\varepsilon}>0,\quad \liminf_{\varepsilon\to 0}|w_\varepsilon|_{p}>0,
\end{equation}
for $p\in (2,2^*)$.
In fact, similar to the argument of \eqref{eq2.9}, we  can prove  for some $C, c>0$,
$$\|w_\varepsilon\|^2_\varepsilon\leq \int_{\mathbb R^N} |w_\varepsilon|^p+\mathbf{1}_{R_0,\varepsilon}e^{-c\varepsilon|x|^2}|w_\varepsilon|).
$$
Then by Sobolev inequality,
\begin{equation*}
 |w_\varepsilon|_p^2\leq C\|w_\varepsilon\|^2_\varepsilon
\leq C(|w_\varepsilon|_{p}^{p}+e^{-c/\varepsilon}|w_\varepsilon|_p).
\end{equation*}
So we have either $\lim_{\varepsilon\to 0}|w_\varepsilon|_p>0$ or $|w_\varepsilon|_p\leq Ce^{-c/\varepsilon}$ for small $\varepsilon$. Then \eqref{eq3.14} holds since $\|w_\varepsilon\|_\varepsilon\geq \varepsilon^2$.

\end{remark}

Next we give more properties about $\Gamma_\varepsilon$.

\begin{lemma}\label{lem 2.3}
 Assume $M\in (0,\infty)$, $\varepsilon\in(0,1)$ and $u_\varepsilon\in H_\varepsilon$ satisfy
$$\Gamma_\varepsilon(u_\varepsilon)\leq M,\ \ \ |\Gamma_\varepsilon'(u_\varepsilon)u_\varepsilon|\leq M\|u_\varepsilon\|_\varepsilon.$$
Then there hold
$$\|u_\varepsilon\|_\varepsilon\leq C(M),\quad \int_{\mathbb R^N} |G(u_\varepsilon)|\leq C(M),
\quad
\int_{\mathbb R^N} \big|u_\varepsilon^2\log u_\varepsilon^2\big|\leq C(M),$$
 for some constant $C(M)>0$ independent of $\varepsilon$.
\end{lemma}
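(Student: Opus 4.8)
The plan is to combine the two hypotheses, testing them against $u_\varepsilon$ both directly and through the combination $2\Gamma_\varepsilon(u_\varepsilon)-\Gamma_\varepsilon'(u_\varepsilon)u_\varepsilon$. I shall use the sign properties of $f_\varepsilon,F_\varepsilon,g,G$ collected in Lemma~\ref{lem etaq}, the estimate of Corollary~\ref{cor2.3} (so that, since $e^{-c/\varepsilon}\le1$ for $\varepsilon\in(0,1)$, every $\Psi_\varepsilon$-term below is dominated by $C(1+\|u_\varepsilon\|_\varepsilon)$), the fact that $\widetilde V\ge1$ makes $H_\varepsilon\hookrightarrow L^q$ for $q\in[2,2^*)$ continuous with a constant independent of $\varepsilon$, and the elementary inequality $s^2\log s^2\le\tfrac{2}{\tau e}|s|^{2+\tau}$ (valid for all $s$ and $\tau>0$ because $\log y\le y/e$). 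Write $t:=\|u_\varepsilon\|_\varepsilon$; we may assume $t\ge1$. As a first step, Lemma~\ref{lem etaq}(v) gives $f_\varepsilon(x,s)s-2F_\varepsilon(x,s)=(1-\chi_\varepsilon(x))s^2+\chi_\varepsilon(x)\big(g(s)s-2G(s)\big)\ge(1-\chi_\varepsilon(x))s^2$, where $1-\chi_\varepsilon$ is the indicator of $\Omega_\varepsilon$; plugging $u_\varepsilon$ into the identity $2\Gamma_\varepsilon(u)-\Gamma_\varepsilon'(u)u=2\Psi_\varepsilon(u)-\Psi_\varepsilon'(u)u+\int_{\mathbb R^N}\big(f_\varepsilon(x,u)u-2F_\varepsilon(x,u)\big)$ and invoking the hypotheses and Corollary~\ref{cor2.3} yields the \emph{localized $L^2$ estimate}
\begin{equation*}
\int_{\Omega_\varepsilon}u_\varepsilon^2\ \le\ \big(2\Gamma_\varepsilon(u_\varepsilon)-\Gamma_\varepsilon'(u_\varepsilon)u_\varepsilon\big)+Ce^{-c/\varepsilon}(1+t)\ \le\ C(M)(1+t).
\end{equation*}

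Next I bound $t$. From $|\Gamma_\varepsilon'(u_\varepsilon)u_\varepsilon|\le Mt$, the identity $\|u_\varepsilon\|_\varepsilon^2=\Gamma_\varepsilon'(u_\varepsilon)u_\varepsilon-\Psi_\varepsilon'(u_\varepsilon)u_\varepsilon+\int_{\mathbb R^N}f_\varepsilon(x,u_\varepsilon)u_\varepsilon$, and the facts $g(s)s\le0$ (Lemma~\ref{lem etaq}(iii)) and $1-\chi_\varepsilon=\mathbf 1_{\Omega_\varepsilon}$, one obtains (discarding the nonpositive terms)
\begin{equation*}
t^2\ \le\ Mt+Ce^{-c/\varepsilon}t+\int_{\Omega_\varepsilon}\big(u_\varepsilon^2\log u_\varepsilon^2\big)^+\ \le\ Mt+Ce^{-c/\varepsilon}t+\frac{2}{\tau e}\int_{\Omega_\varepsilon}|u_\varepsilon|^{2+\tau},
\end{equation*}
where $\tau>0$ is fixed so small that $2+\tau<2^*$. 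Interpolating $L^{2+\tau}(\Omega_\varepsilon)$ between $L^2(\Omega_\varepsilon)$ and $L^{2^*}(\mathbb R^N)$ (for $N\le2$, replace $2^*$ by any fixed finite exponent $>2+\tau$), estimating the $L^2$ factor by the localized estimate and the $L^{2^*}$ factor by $|u_\varepsilon|_{2^*}\le C\|u_\varepsilon\|_\varepsilon=Ct$, one gets (using $t\ge1$) $\int_{\Omega_\varepsilon}|u_\varepsilon|^{2+\tau}\le C(M)\,t^{(2+\tau)(1+\beta)/2}$, where $\beta=\beta(\tau,N)\to0$ as $\tau\to0$; hence the exponent $(2+\tau)(1+\beta)/2\to1$, and after fixing $\tau$ small it equals $2-\sigma$ for some $\sigma\in(0,1)$. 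Thus $t^2\le Mt+Ce^{-c/\varepsilon}t+C(M)\,t^{2-\sigma}$, which (since $2-\sigma<2$) forces $\|u_\varepsilon\|_\varepsilon=t\le C(M)$. I expect this to be the main obstacle: the crude bound $\int_{\Omega_\varepsilon}|u_\varepsilon|^{2+\tau}\le C\|u_\varepsilon\|_\varepsilon^{2+\tau}$ is superquadratic and does not close the inequality, so the \emph{localized} improvement of the first step --- available precisely because $f_\varepsilon$ is the unpenalized nonlinearity on $\Omega_\varepsilon$ --- is essential to push the exponent of $t$ below $2$.

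Finally, with $\|u_\varepsilon\|_\varepsilon\le C(M)$ established, Sobolev and $s^2\log s^2\le\tfrac{2}{\tau e}|s|^{2+\tau}$ give $\int_{\mathbb R^N}\big(u_\varepsilon^2\log u_\varepsilon^2\big)^+\le C|u_\varepsilon|_{2+\tau}^{2+\tau}\le C\|u_\varepsilon\|_\varepsilon^{2+\tau}\le C(M)$. For the negative part I use the energy level itself: from the definition \eqref{2.3} of $\Gamma_\varepsilon$, $\int_{\mathbb R^N}F_\varepsilon(x,u_\varepsilon)=\tfrac12\|u_\varepsilon\|_\varepsilon^2+\Psi_\varepsilon(u_\varepsilon)-\Gamma_\varepsilon(u_\varepsilon)\ge-C(M)$, while a direct verification from Lemma~\ref{lem etaq}(iii),(iv) produces the pointwise bound $F_\varepsilon(x,s)+\tfrac12\big(s^2\log s^2\big)^-\le\tfrac12\big(s^2\log s^2\big)^++Cs^2$; combining these with the positive-part bound and with $\int_{\mathbb R^N}u_\varepsilon^2\le\|u_\varepsilon\|_\varepsilon^2\le C(M)$ yields $\int_{\mathbb R^N}\big(u_\varepsilon^2\log u_\varepsilon^2\big)^-\le C(M)$. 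Adding the two parts bounds $\int_{\mathbb R^N}|u_\varepsilon^2\log u_\varepsilon^2|$, and then $|G(s)|\le\tfrac12\big(s^2\log s^2\big)^-+2s^2$ (Lemma~\ref{lem etaq}(iii)) bounds $\int_{\mathbb R^N}|G(u_\varepsilon)|$. One caveat: the constant in $H_\varepsilon\hookrightarrow L^q$ for $q<2$ (Lemma~\ref{lem 2.1}) depends on $\varepsilon$, so the negative logarithmic part must be handled through $\Gamma_\varepsilon(u_\varepsilon)\le M$, as above, rather than by a sub-$L^2$ power estimate --- this is what keeps $C(M)$ independent of $\varepsilon$.
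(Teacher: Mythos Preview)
Your proposal is correct and follows essentially the same route as the paper. Both arguments exploit $2\Gamma_\varepsilon(u_\varepsilon)-\Gamma_\varepsilon'(u_\varepsilon)u_\varepsilon$ together with Lemma~\ref{lem etaq}(v) to get a localized $L^2$ bound of order $1+\|u_\varepsilon\|_\varepsilon$, then feed this into a Gagliardo--Nirenberg/interpolation estimate on $\int_{\Omega_\varepsilon}(u_\varepsilon^2\log u_\varepsilon^2)^+$ to obtain a sub-quadratic power of $\|u_\varepsilon\|_\varepsilon$ and close the inequality. The only differences are cosmetic: the paper localizes on the slightly larger set $\Omega_\varepsilon\cup\{|u_\varepsilon|\le e^{-1}\}$ and picks the explicit exponent $p_0=2+\tfrac{2}{N+2}$ (yielding the power $3/2$), closes via $\Gamma_\varepsilon(u_\varepsilon)\le M$ rather than the derivative bound, and reads off the negative logarithmic part directly from that same inequality, whereas you recover it afterwards through the pointwise bound on $F_\varepsilon$.
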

\begin{proof}
Using  Lemma \ref{lem etaq} (v) and Corollary \ref{cor2.3}, it is easy to see
\begin{equation*}
\begin{split}
M(1+\|u_\varepsilon\|_\varepsilon)&\geq \Gamma_\varepsilon(u_\varepsilon)-\frac{1}{2}\Gamma_\varepsilon'(u_\varepsilon)u_\varepsilon
         \geq-Ce^{-c/\varepsilon}
         +\frac{1}{2}\int_{\Omega_\varepsilon\cup \{|u_\varepsilon|\leq e^{-1}\}} |u_\varepsilon|^2.
\end{split}
\end{equation*}
Therefore, we have $\|u_\varepsilon\|_{L^2(\Omega_\varepsilon\cup \{|u_\varepsilon|\leq e^{-1}\})}^2\leq C(M)(1+\|u_\varepsilon\|_\varepsilon)$, $\varepsilon\in(0,1)$. So by Gagliardo--Nirenberg inequality we have
\begin{equation*}
\begin{split}
\int_{\Omega_\varepsilon\cup \{|u_\varepsilon|\leq e^{-1}\}} \big(u_\varepsilon^2\log u_\varepsilon^2\big)^+
&\leq C_0\int_{\Omega_\varepsilon\cup \{|u_\varepsilon|\leq e^{-1}\}}|u_\varepsilon|^{p_0}
\leq C_0'\|u_\varepsilon\|_{L^2(\Omega_\varepsilon\cup \{|u_\varepsilon|\leq e^{-1}\})}^{p_0(1-\theta)}\|u_\varepsilon\|_\varepsilon^{p_0\theta}\\
&\leq C_0'\big[C(M)(1+\|u_\varepsilon\|_\varepsilon)\big]^{\frac{p_0}{2}(1-\theta)}\|u_\varepsilon\|_\varepsilon^{p_0\theta},
\end{split}
\end{equation*}
where $2<p_0<2^*,\theta=\frac{N(p_0-2)}{2p_0}$, $C_0$ and $C'_0$ are constants depending only on $p_0$ and $N$.
Choosing $p_0=2+\frac{2}{N+2}$, we have
\begin{equation}\label{2.10}
\begin{split}
\int_{\Omega_\varepsilon\cup \{|u_\varepsilon|\leq e^{-1}\}} \big(u_\varepsilon^2\log u_\varepsilon^2\big)^+ \leq C'(M)(1+\|u_\varepsilon\|_\varepsilon)^{\frac{3}{2}}.
\end{split}
\end{equation}
This, together with $\Gamma_\varepsilon(u_\varepsilon)\leq M$, $\Psi_\varepsilon(u_\varepsilon)\geq -C$ for $\varepsilon\in(0,1)$ and $G\leq 0$, leads us to the fact
\begin{equation*}
\begin{split}
M+C'(M)(1+\|u_\varepsilon\|_\varepsilon)^{\frac{3}{2}}+C
&\geq
\frac{1}{2}\|u_\varepsilon\|_\varepsilon^2
+\frac{1}{2}\int_{\Omega_\varepsilon\cup \{|u_\varepsilon|\leq e^{-1}\}}\big(u_\varepsilon^2\log u_\varepsilon^2\big)^-,
\end{split}
\end{equation*}
which implies
$$\|u_\varepsilon\|_\varepsilon\leq C(M)\quad
\text{and}\quad
\int_{\Omega_\varepsilon\cup \{|u_\varepsilon|\leq e^{-1}\}}\big(u_\varepsilon^2\log u_\varepsilon^2\big)^-\leq C(M).
$$
And then the conclusion follows from \eqref{2.10} and
$\int_{\{|u_\varepsilon|\geq e^{-1}\}}\big(u_\varepsilon^2\log u_\varepsilon^2\big)^-
\leq 2\int_{\mathbb R^N}u_\varepsilon^2.$
\end{proof}

\begin{corollary}\label{cor 2.4}
For $\varepsilon\in(0,1)$, $\Gamma_\varepsilon$ satisfies Palais--Smale condition.
\end{corollary}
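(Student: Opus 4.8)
The plan is to verify the Palais--Smale condition by the classical two-step scheme adapted to the modified functional: extract a weakly convergent subsequence of a Palais--Smale sequence, and then upgrade weak to strong convergence using the compact embedding of Lemma~\ref{lem 2.1} together with the decay properties of $\Psi_\varepsilon$ from Corollary~\ref{cor2.3}. Throughout, $\varepsilon\in(0,1)$ is held fixed.

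First I would fix a sequence $(u_n)\subset H_\varepsilon$ with $\Gamma_\varepsilon(u_n)\to c$ and $\Gamma_\varepsilon'(u_n)\to0$ in $H_\varepsilon^{-1}$. For $n$ large one has $\Gamma_\varepsilon(u_n)\le|c|+1$ and $|\Gamma_\varepsilon'(u_n)u_n|\le\|\Gamma_\varepsilon'(u_n)\|_{H_\varepsilon^{-1}}\|u_n\|_\varepsilon\le\|u_n\|_\varepsilon$, so Lemma~\ref{lem 2.3}, applied with $M=|c|+2$, gives $\|u_n\|_\varepsilon\le C$; in particular $(u_n)$ is bounded in $H^1(\mathbb R^N)$ because $\widetilde V_\varepsilon\ge1$. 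Passing to a subsequence, $u_n\rightharpoonup u$ weakly in $H_\varepsilon$, and, passing to a further subsequence if necessary, Lemma~\ref{lem 2.1} gives $u_n\to u$ strongly in $L^q(\mathbb R^N)$ for $q\in(\frac{2N}{N+2},2^*)$; in particular $u_n\to u$ in $L^2(\mathbb R^N)$, since $2$ lies in this interval for every $N\ge1$.

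The core step is to show that $\Gamma_\varepsilon'(u_n)(u_n-u)\to0$ forces $\|u_n-u\|_\varepsilon\to0$. Expanding, $\Gamma_\varepsilon'(u_n)(u_n-u)=(u_n,u_n-u)_\varepsilon+\Psi_\varepsilon'(u_n)(u_n-u)-\int_{\mathbb R^N}f_\varepsilon(x,u_n)(u_n-u)$, so it suffices to prove the last two terms tend to $0$: then $(u_n,u_n-u)_\varepsilon\to0$, and since $(u_n,u)_\varepsilon\to\|u\|_\varepsilon^2$ by weak convergence, we get $\|u_n\|_\varepsilon\to\|u\|_\varepsilon$, hence $u_n\to u$ strongly in the Hilbert space $H_\varepsilon$. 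For the penalization term, by \eqref{2.1} one has $\Psi_\varepsilon'(u_n)(u_n-u)=\int_{\mathbb R^N}w_n(u_n-u)$ with $w_n=\overline V_\varepsilon\big(\tfrac12\eta_\varepsilon(x,|u_n|)|u_n|u_n+\widehat\eta_\varepsilon(x,|u_n|)u_n\big)$; by \eqref{tildev} and (V0) one has $|\overline V_\varepsilon(x)|\le C(1+|\varepsilon x|^2)$, and since $w_n$ is supported in $\{|\varepsilon x|\ge R_0\}$, Lemma~\ref{lem etaq} gives $|w_n(x)|\le C(1+|\varepsilon x|^2)\phi_\varepsilon(x)$, which is bounded in $L^2(\mathbb R^N)$ uniformly in $n$ for this fixed $\varepsilon$; hence $|\Psi_\varepsilon'(u_n)(u_n-u)|\le|w_n|_2\,|u_n-u|_2\to0$. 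For the nonlinear term I would show $f_\varepsilon(\cdot,u_n)$ is bounded in $L^2(\mathbb R^N)$ uniformly in $n$: on $\{|u_n|<e^{-1}\}$ one has $f_\varepsilon(x,u_n)^2=u_n^2(\log u_n^2)^2\le C_\delta|u_n|^{2-\delta}$, integrable uniformly by Lemma~\ref{lem 2.1} for small $\delta>0$; on $\{|u_n|\ge e^{-1}\}$ the summand $\chi_\varepsilon g(u_n)$ satisfies $|g(u_n)|\le2e^{-1}$ with $|\{|u_n|\ge e^{-1}\}|\le e^2|u_n|_2^2$, while the summand $(1-\chi_\varepsilon)u_n\log u_n^2$ satisfies $u_n^2(\log u_n^2)^2\le C_\delta|u_n|^{2+\delta}$, integrable uniformly by the $H^1$ bound for small $\delta>0$. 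Therefore $\big|\int_{\mathbb R^N}f_\varepsilon(x,u_n)(u_n-u)\big|\le|f_\varepsilon(\cdot,u_n)|_2\,|u_n-u|_2\to0$, and the Palais--Smale condition follows.

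I expect the main obstacle to be the uniform $L^2$ control of the logarithmic nonlinearity near $u=0$, where $s\mapsto s\log s^2$ is neither Lipschitz nor of Sobolev-subcritical type; this is handled by exploiting the slightly sub-quadratic integrability $u_n\in L^{2-\delta}(\mathbb R^N)$ afforded by Lemma~\ref{lem 2.1}, which ultimately rests on the growth $\widetilde V(x)\ge|x|^2$ of the modified potential at infinity. By contrast, the treatment of $\Psi_\varepsilon'$ is routine once the Gaussian weight estimates of Lemma~\ref{lem etaq} and the bound $|\overline V_\varepsilon(x)|\le C(1+|\varepsilon x|^2)$ are in hand, and it is here — and only here — that we use that $\varepsilon$ is held fixed in $(0,1)$.
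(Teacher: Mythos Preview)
Your proof is correct and follows essentially the same route as the paper's: boundedness from Lemma~\ref{lem 2.3}, compact embedding from Lemma~\ref{lem 2.1}, and then norm convergence via control of the nonlinear and penalization terms. The only cosmetic difference is that the paper tests $\Gamma_\varepsilon'(u_n)$ against $u_n$ (invoking $\Gamma_\varepsilon'(u)u=0$ for the weak limit) to obtain $\|u_n\|_\varepsilon^2\to\|u\|_\varepsilon^2$, whereas you test against $u_n-u$ directly; both arguments rest on the same convergence $\int f_\varepsilon(x,u_n)u_n\to\int f_\varepsilon(x,u)u$ and $\Psi_\varepsilon'(u_n)u_n\to\Psi_\varepsilon'(u)u$ that the compact embedding delivers.
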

\begin{proof}
Fix $\varepsilon\in(0,1)$, let $\{u_n\}$ be a Palais--Smale sequence for $\Gamma_\varepsilon$. According to Lemma \ref{lem 2.3}, up to a subsequence, $u_n\rightharpoonup u$ weakly in $H_\varepsilon$ for some $u\in H_\varepsilon$, and hence by Lemma \ref{lem 2.1}
$u_n\to u$ strongly in $L^q(\mathbb R^N)$, $\frac{2N}{N+2}<q<2^*$ and
$$\int_{\mathbb R^N}f_\varepsilon(x,u_n)u_n\to \int_{\mathbb R^N}f_\varepsilon(x,u)u,\ \ n\to\infty.$$
Moreover, by dominated convergence theorem and Lemma \ref{lem etaq},
$\Psi_\varepsilon'(u_n)u_n\to \Psi_\varepsilon'(u)u$ as $n\to\infty$. So from
$\lim_{n\to\infty}\Gamma_\varepsilon'(u_n)u_n=0$ and $\Gamma_\varepsilon'(u)u=0$, it follows
$\lim_{n\to\infty}\|u_n\|_\varepsilon^2=\|u\|_\varepsilon^2$. Then the conclusion follows.
\end{proof}

The next corollary gives a uniform sub-solution estimate for the critical point of $\Gamma_\varepsilon$.
\begin{corollary}\label{lem 2.7}
For $M>0$, let $u_\varepsilon$ be critical point
of $\Gamma_\varepsilon$ with $\Gamma_\varepsilon(u_\varepsilon)\leq M$.
Then for any $\rho\in(0,1)$, there exists a
constant $C=C(M,\rho)>0$, independent of $\varepsilon$ and $x\in\mathbb R^N$, such that
\begin{equation*}
\begin{aligned}
&|u_\varepsilon(x)|\leq C\|u_\varepsilon\|_{L^2(B(x,\rho))},
& &x\in B(0,R_0\varepsilon^{-1}-\rho),\\
&|u_\varepsilon(x)|\leq C(\|u_\varepsilon\|_{L^2(B(x,\rho))}+\varepsilon),
& &x\in\mathbb R^N.
\end{aligned}
\end{equation*}
\end{corollary}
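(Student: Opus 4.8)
The plan is to establish the estimate by showing that $w_\varepsilon:=|u_\varepsilon|$ is a nonnegative subsolution of a linear elliptic inequality whose zeroth--order coefficient is bounded in a \emph{subcritical} Lebesgue space uniformly in $\varepsilon$, and then applying a standard local $L^\infty$--bound of De Giorgi--Nash--Moser type, keeping track of how the constant depends on the inhomogeneous term.

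First I would note that, since $u_\varepsilon$ is a critical point of $\Gamma_\varepsilon$, we have $\Gamma_\varepsilon'(u_\varepsilon)u_\varepsilon=0$, so Lemma~\ref{lem 2.3} applied with this $M$ gives $\|u_\varepsilon\|_\varepsilon\le C(M)$; since $\widetilde V\ge1$ this implies $\|u_\varepsilon\|\le C(M)$ and hence $|u_\varepsilon|_q\le C(M)$ for all $q\in[2,2^*)$ (for every finite $q$ when $N\le2$). By Remark~\ref{rek2.9}(i), fixing some $p\in(2,2^*)$, the function $w_\varepsilon\ge0$ satisfies, weakly in $\mathbb R^N$,
\begin{equation*}
-\Delta w_\varepsilon\le a_\varepsilon(x)\,w_\varepsilon+h_\varepsilon(x),\qquad a_\varepsilon:=C\,w_\varepsilon^{\,p-2},\quad h_\varepsilon:=C\,\mathbf{1}_{R_0,\varepsilon}\,e^{-c\varepsilon|x|^2}.
\end{equation*}
Because $p<2^*$ \emph{strictly}, the exponent $r_0:=2^*/(p-2)$ satisfies $r_0>N/2$ (for $N\le2$ take any finite $r_0>N/2$ instead), and $\|a_\varepsilon\|_{L^{r_0}(\mathbb R^N)}=C\,|u_\varepsilon|_{2^*}^{\,p-2}\le C(M)$ uniformly in $\varepsilon$.

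Next, since $r_0>N/2$, Hölder's inequality gives, uniformly in $y\in\mathbb R^N$ and $\varepsilon\in(0,1)$,
\begin{equation*}
\|a_\varepsilon\|_{L^{N/2}(B(y,r))}\le |B(y,r)|^{\frac2N-\frac1{r_0}}\,\|a_\varepsilon\|_{L^{r_0}(B(y,r))}\le C(M)\,r^{\,2-N/r_0}\to0\quad(r\to0),
\end{equation*}
so on balls of sufficiently small radius $r_1=r_1(M,N,p)$ the $L^{N/2}$--norm of $a_\varepsilon$ is below the smallness threshold in the Moser scheme. Testing the differential inequality against truncated powers of $w_\varepsilon$ and using the Sobolev inequality on such balls (and a finite covering of $B(x,\rho)$ whose cardinality depends only on $\rho/r_1$), a Brezis--Kato iteration shows $w_\varepsilon\in L^t_{\rm loc}$ for every finite $t$ with local bounds linear in $\|w_\varepsilon\|_{L^2}$ and depending otherwise only on $M,N,p,\rho$; taking $t$ large puts $a_\varepsilon w_\varepsilon$ in $L^s$ for some fixed $s>N/2$, and a further Moser iteration for $-\Delta w_\varepsilon\le a_\varepsilon w_\varepsilon+h_\varepsilon$ (right--hand side in $L^s+L^\infty$) yields, for some $q>N/2$ and $C=C(M,N,p,\rho)$ independent of $\varepsilon$ and of $x\in\mathbb R^N$ (by translation invariance),
\begin{equation*}
\sup_{B(x,\rho/2)}w_\varepsilon\le C\big(\|w_\varepsilon\|_{L^2(B(x,\rho))}+\|h_\varepsilon\|_{L^q(B(x,\rho))}\big).
\end{equation*}

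Finally I would estimate the inhomogeneous term. If $x\in B(0,R_0\varepsilon^{-1}-\rho)$ then $B(x,\rho)\subset B(0,R_0\varepsilon^{-1})$, where $\mathbf{1}_{R_0,\varepsilon}\equiv0$, so $h_\varepsilon\equiv0$ on $B(x,\rho)$ and, using $w_\varepsilon=|u_\varepsilon|$ and $|u_\varepsilon(x)|\le\sup_{B(x,\rho/2)}w_\varepsilon$, the first estimate follows. For arbitrary $x$, the support of $h_\varepsilon$ lies in $\{|y|\ge R_0\varepsilon^{-1}\}$, where $e^{-c\varepsilon|y|^2}\le e^{-cR_0^2\varepsilon^{-1}}$, so $\|h_\varepsilon\|_{L^q(B(x,\rho))}\le C|B(0,\rho)|^{1/q}e^{-cR_0^2\varepsilon^{-1}}\le C\varepsilon$, since $e^{-a\varepsilon^{-1}}\le C_a\varepsilon$ for $\varepsilon\in(0,1)$; this gives the second estimate. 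The step I expect to be the main obstacle is obtaining a bound \emph{linear} in $\|u_\varepsilon\|_{L^2}$ in the presence of the superlinear penalized nonlinearity: this is what forces the two--stage argument (Brezis--Kato to upgrade integrability, then Moser to reach $L^\infty$), and it works precisely because $p$ is \emph{strictly} subcritical, which simultaneously provides the smallness of $\|a_\varepsilon\|_{L^{N/2}}$ on small balls and --- via the $\varepsilon$--uniform bound of Lemma~\ref{lem 2.3} --- its uniformity in $\varepsilon$.
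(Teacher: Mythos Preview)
Your argument is correct and follows the same underlying idea as the paper: both use Lemma~\ref{lem 2.3} for the uniform $H^1$ bound, then invoke the subsolution inequality \eqref{eq2.9} from Remark~\ref{rek2.9}(i), and finish with a De Giorgi--Nash--Moser type local maximum principle, handling the inhomogeneous term $h_\varepsilon$ exactly as you do. The paper's proof is simply much terser, citing ``a standard iteration procedure'' and ``the sub-solution estimates in \cite{Trudinger1997}'' without spelling out the mechanism.

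There is one small organizational difference worth noting. The paper first runs the iteration once globally to obtain $|u_\varepsilon|_\infty\le C(M)$; after that the coefficient $C\,w_\varepsilon^{p-2}$ in \eqref{eq2.9} is a bounded function (not merely $L^{r_0}$), so the classical local sub-solution estimate with bounded zeroth-order coefficient applies in one stroke and the linearity in $\|u_\varepsilon\|_{L^2(B(x,\rho))}$ is immediate. You instead keep $a_\varepsilon$ in $L^{r_0}$ with $r_0>N/2$ throughout and appeal to the Brezis--Kato/Moser machinery directly; this is a perfectly valid route and in fact your two-stage argument is unnecessary --- Theorem~8.17 in \cite{Trudinger1997} already allows the zeroth-order coefficient in $L^{r_0}$, $r_0>N/2$, and gives the estimate linear in $\|w^+\|_{L^2}$. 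Either way the conclusion is the same, and your treatment of the inhomogeneity (vanishing on $B(0,R_0\varepsilon^{-1})$, and of size $e^{-c\varepsilon^{-1}}\le C\varepsilon$ elsewhere) matches the paper's.
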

\begin{proof}
Note that  by  Lemma \ref{lem 2.3}, $\|u_\varepsilon\|_{H^1}\leq C(M)$. By Remark \ref{rek2.9}, $u_\varepsilon$ satisfies \eqref{eq2.9}.
Then by a standard iteration procedure, there holds $|u_\varepsilon|_{\infty}\leq C(M)$, and by the sub-solution estimates in \cite{Trudinger1997} (see also \cite{S}), one can prove the conclusion.
\end{proof}

Since we have assumed that $0\in\Omega$, there holds $B(0,1)\subset\Omega_\varepsilon$ for small $\varepsilon$.
Fix $\omega \in C_0^\infty(B(0,1))\setminus\{0\}$.
We can verify that $\Gamma_\varepsilon$ possesses a mountain-pass structure.
\begin{lemma}\label{lem 2.5}
There exist positive constants $\varepsilon_0, t_0, r_0, M_0$ and $M_1$ such that for $\varepsilon\in(0,\varepsilon_0)$, the following statements hold.
\begin{itemize}
\item[(i)]
$\sup_{t\geq t_0}\Gamma_\varepsilon(t\omega)<-2$  and
 $\sup_{t\geq 0}\Gamma_\varepsilon(t\omega)\leq M_1.$
\item[(ii)]
 $\inf_{\|u\|_\varepsilon=r_0}\Gamma_\varepsilon(u)\geq M_0$  and $\inf_{\|u\|_\varepsilon\leq r_0}\Gamma_\varepsilon(u)\geq -1$.
\end{itemize}
\end{lemma}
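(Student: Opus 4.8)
The plan is to verify the mountain–pass geometry of $\Gamma_\varepsilon$ by a direct computation, the only delicate bookkeeping being to keep every constant uniform for all small $\varepsilon$. Three structural facts will carry the argument: (a) by Corollary~\ref{cor2.3}, $\sup_u|\Psi_\varepsilon(u)|=O(e^{-c/\varepsilon})$; (b) since $0$ lies in the interior of $\Omega$, one has $B(0,1)\subset\Omega_\varepsilon$ for small $\varepsilon$, so $\chi_\varepsilon\equiv0$ and $\overline V_\varepsilon\equiv0$ on $\mathrm{supp}\,\omega$, i.e. on test functions $t\omega$ the functional reduces to its untruncated, unpenalized form; (c) because $\widetilde V\ge1$ everywhere, $\|u\|_{H^1}\le\|u\|_\varepsilon$, so the Sobolev inequality gives $|u|_q\le C\|u\|_\varepsilon$ for $q\in[2,2^*)$ with $C$ independent of $\varepsilon$.

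For (i): once $\varepsilon$ is small enough that $B(0,1)\subset\Omega_\varepsilon$ and $\varepsilon<R_0$, fact (b) gives $\Psi_\varepsilon(t\omega)=0$, $F_\varepsilon(x,t\omega)=\frac12\big((t\omega)^2\log(t\omega)^2-(t\omega)^2\big)$ and $\widetilde V_\varepsilon=V_\varepsilon$ on $\mathrm{supp}\,\omega$; writing $b=\int_{B(0,1)}\omega^2>0$, $c_\omega=\int_{B(0,1)}\omega^2\log\omega^2$ and $\kappa_\varepsilon=\int_{B(0,1)}(|\nabla\omega|^2+V(\varepsilon x)\omega^2)$, a direct computation yields
$$\Gamma_\varepsilon(t\omega)=\frac{t^2}{2}\big(\kappa_\varepsilon+b-c_\omega-b\log t^2\big).$$
By continuity of $V$, $V(\varepsilon x)\to V(0)$ uniformly on $B(0,1)$, so $\kappa_\varepsilon$ stays bounded away from $0$ and $\infty$ for small $\varepsilon$; hence the right side tends to $-\infty$ as $t\to\infty$ at a rate uniform in $\varepsilon$, giving $t_0$ with $\sup_{t\ge t_0}\Gamma_\varepsilon(t\omega)<-2$, and the elementary identity $\max_{t>0}\frac{t^2}{2}(\lambda-b\log t^2)=\frac{b}{2}e^{\lambda/b-1}$ gives the uniform bound $\sup_{t\ge0}\Gamma_\varepsilon(t\omega)\le M_1$.

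For (ii): I estimate $\Gamma_\varepsilon(u)=\frac12\|u\|_\varepsilon^2+\Psi_\varepsilon(u)-\int F_\varepsilon(x,u)$ from below. By (a), $\Psi_\varepsilon(u)\ge-Ce^{-c/\varepsilon}$; by Lemma~\ref{lem etaq}(iii), $G\le0$, so the part of $-\int F_\varepsilon$ over $\mathbb R^N\setminus\Omega_\varepsilon$ is nonnegative; on $\Omega_\varepsilon$, fixing $\delta>0$ with $2+\delta<2^*$ and using $(u^2\log u^2-u^2)^+\le(u^2\log u^2)^+\le C_\delta|u|^{2+\delta}$ together with (c),
$$-\int F_\varepsilon(x,u)\ \ge\ -\frac12\int_{\Omega_\varepsilon}(u^2\log u^2)^+\ \ge\ -C|u|_{2+\delta}^{2+\delta}\ \ge\ -C\|u\|_\varepsilon^{2+\delta}.$$
Thus $\Gamma_\varepsilon(u)\ge\frac12\|u\|_\varepsilon^2-C\|u\|_\varepsilon^{2+\delta}-Ce^{-c/\varepsilon}$. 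Choosing $r_0$ with $Cr_0^{\delta}\le\frac14$ and then $\varepsilon_0$ with $Ce^{-c/\varepsilon}\le\frac18 r_0^2$ for $\varepsilon<\varepsilon_0$ gives $\Gamma_\varepsilon(u)\ge\frac18 r_0^2=:M_0>0$ on $\|u\|_\varepsilon=r_0$, and $\Gamma_\varepsilon(u)\ge-Cr_0^{2+\delta}-Ce^{-c/\varepsilon}\ge-1$ on $\|u\|_\varepsilon\le r_0$, after shrinking $r_0,\varepsilon_0$ if necessary. The argument is essentially routine; the one point requiring attention—rather than a genuine obstacle—is precisely the uniformity of $t_0,r_0,M_0,M_1,\varepsilon_0$ as $\varepsilon\to0$, which is furnished by (a), (b), (c) together with the uniform convergence of $V(\varepsilon\cdot)$ to $V(0)$ on the fixed ball $B(0,1)$.
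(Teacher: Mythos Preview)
Your proof is correct and follows essentially the same approach as the paper's. Both arguments exploit that on $\mathrm{supp}\,\omega\subset B(0,1)\subset\Omega_\varepsilon$ the penalizations vanish and $\widetilde V_\varepsilon$ is uniformly bounded, giving the explicit formula for $\Gamma_\varepsilon(t\omega)$ and hence (i); and both obtain (ii) from the lower bound $\Gamma_\varepsilon(u)\ge\frac12\|u\|_\varepsilon^2-C\|u\|_\varepsilon^{p}-Ce^{-c/\varepsilon}$ via $G\le0$, $(u^2\log u^2)^+\le C|u|^{p}$, Sobolev, and Corollary~\ref{cor2.3}. Your write-up is slightly more explicit (the closed-form maximum in (i), the separation of $\Omega_\varepsilon$ and its complement in (ii)), but there is no substantive difference.
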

\begin{proof}
For small $\varepsilon_0$ and $\varepsilon\in(0,\varepsilon_0)$, there holds $B(0,1)\subset\Omega_\varepsilon$. Thus for any $x\in\mathbb R^N$ and $t\in (0,\infty)$, we have $f_\varepsilon(x, t\omega(x))=t\omega(x)\log (t\omega(x))^2$ . So
\begin{equation}\label{2.5}
\begin{split}
\Gamma_\varepsilon(t\omega)&=\frac{t^2}{2}\int_{\mathbb R^N}|\nabla \omega|^2
+(\widetilde V_\varepsilon(x)+1)\omega^2-\omega^2\log (t^2\omega^2)\\
&=\frac{t^2}{2}\int_{\mathbb R^N}|\nabla \omega|^2
+(\widetilde V_\varepsilon(x)+1)\omega^2-\omega^2\log \omega^2
-\frac{t^2\log t^2}{2}\int_{\mathbb R^N}\omega^2.
\end{split}
\end{equation}
Since $\widetilde V_\varepsilon(x)$ is uniformly bounded on $B(0,1)$, there exists $t_0>0$
independent of $\varepsilon\in(0,\varepsilon_0)$ such that $\Gamma_\varepsilon(t\omega)<0$ for $t\geq t_0$, which  implies further that $\sup_{t\geq 0}\Gamma_\varepsilon(t\omega)\leq M_1$  for some $M_1>0$.

To prove (ii), we notice that
 $u^2\log u^2\leq C_0|u|^{p_0}$ for $ 2<p_0<2^*$.
 Therefore by  Corollary \ref{cor2.3} and Sobolev inequality,
$$\Gamma_\varepsilon(u)\geq \frac{1}{2}\|u\|_\varepsilon^2-C_0'\|u\|_\varepsilon^{p_0}-Ce^{-c\varepsilon^{-1}}.$$
It follows that
there exist $r_0$ and $M_0>0$ independent of $\varepsilon$ such that
$$\Gamma_\varepsilon(u)\geq -Ce^{-c\varepsilon^{-1}},\ \|u\|_\varepsilon\leq r_0\ \  \text { and} \ \ \Gamma_\varepsilon(u)\geq 2M_0-Ce^{-c\varepsilon^{-1}},\ \ \|u\|_\varepsilon=r_0.$$
Then making $\varepsilon_0$ smaller if necessary,  (ii)   follows.
\end{proof}

In order to find critical points of $\Gamma_\varepsilon$, for each $\varepsilon\in (0,\varepsilon_0)$, define the mountain pass value
 for  the modified functional
$\Gamma_\varepsilon$
\begin{equation}\label{eq2.12}
d_\varepsilon=\inf_{h\in \mathcal H_\varepsilon}\max_{s\in[0,1]}\Gamma_\varepsilon(h(s)),
\end{equation}
where
$$\mathcal H_\varepsilon:=\set{h\in C([0,1],H_\varepsilon) |  h(0)=0,\ \Gamma_\varepsilon(h(1))<-2}.$$
Let $\varepsilon_0$, $M_0$ and $M_1$ be the positive constants fixed in Lemma \ref{lem 2.5}. Then we have
\begin{proposition}\label{pro 2.6}
For each $\varepsilon\in(0,\varepsilon_0)$,  $\Gamma_{\varepsilon}$ possesses a nontrivial critical point $u_\varepsilon\in H_\varepsilon$
 satisfying
$\Gamma_\varepsilon(u_\varepsilon)=d_\varepsilon\in[M_0,M_1]$.
Moreover, $u_\varepsilon$ is a positive weak solution to \eqref{eq2.8}.
\end{proposition}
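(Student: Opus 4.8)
The plan is to read Proposition~\ref{pro 2.6} as a mountain-pass theorem for $\Gamma_\varepsilon$: I would feed the geometry supplied by Lemma~\ref{lem 2.5} and the compactness supplied by Corollary~\ref{cor 2.4} into a minimax argument, and then promote the critical point so obtained to a positive weak solution of \eqref{eq2.8}. First I would pin down the minimax level. Taking $\omega\ge0$ in the construction preceding Lemma~\ref{lem 2.5} and using the ray $h_0(s)=s\,t_0\omega$, $s\in[0,1]$, one has $h_0(0)=0$ and, by Lemma~\ref{lem 2.5}(i), $\Gamma_\varepsilon(h_0(1))=\Gamma_\varepsilon(t_0\omega)<-2$, so $\mathcal H_\varepsilon\neq\emptyset$, $d_\varepsilon$ is well defined, and $d_\varepsilon\le\max_{s\in[0,1]}\Gamma_\varepsilon(s\,t_0\omega)\le\sup_{t\ge0}\Gamma_\varepsilon(t\omega)\le M_1$. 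For the lower bound, an arbitrary $h\in\mathcal H_\varepsilon$ starts at $\|h(0)\|_\varepsilon=0<r_0$ and, since $\Gamma_\varepsilon(h(1))<-2<-1\le\inf_{\|u\|_\varepsilon\le r_0}\Gamma_\varepsilon$ by Lemma~\ref{lem 2.5}(ii), must have $\|h(1)\|_\varepsilon>r_0$; by continuity of $s\mapsto\|h(s)\|_\varepsilon$ the path meets the sphere $\{\|u\|_\varepsilon=r_0\}$, on which $\Gamma_\varepsilon\ge M_0$, so $\max_s\Gamma_\varepsilon(h(s))\ge M_0$ and hence $d_\varepsilon\ge M_0$.

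Next I would show that $d_\varepsilon$ is a critical value. Since $\Gamma_\varepsilon\in C^1(H_\varepsilon)$ satisfies the Palais--Smale condition (Corollary~\ref{cor 2.4}), this is the mountain-pass theorem, the only point needing care being that $\mathcal H_\varepsilon$ in \eqref{eq2.12} prescribes its paths only through the open endpoint condition $\Gamma_\varepsilon(h(1))<-2$ rather than through a fixed endpoint. This is harmless: if $d_\varepsilon$ were a regular value, the quantitative deformation lemma would furnish $\sigma\in(0,M_0/2)$ and a continuous map $\zeta\colon H_\varepsilon\to H_\varepsilon$ with $\zeta(\{\Gamma_\varepsilon\le d_\varepsilon+\sigma\})\subset\{\Gamma_\varepsilon\le d_\varepsilon-\sigma\}$ and $\zeta=\mathrm{id}$ off $\Gamma_\varepsilon^{-1}([d_\varepsilon-2\sigma,\,d_\varepsilon+2\sigma])$; as $-2<0=\Gamma_\varepsilon(0)<d_\varepsilon-2\sigma$, the map $\zeta$ fixes $0$ and leaves the sublevel set $\{\Gamma_\varepsilon<-2\}$ invariant, so $\zeta\circ h\in\mathcal H_\varepsilon$ for any near-optimal $h$, contradicting the definition of $d_\varepsilon$. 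This produces $u_\varepsilon\in H_\varepsilon$, $u_\varepsilon\neq0$ (because $d_\varepsilon\ge M_0>0=\Gamma_\varepsilon(0)$), with $\Gamma_\varepsilon(u_\varepsilon)=d_\varepsilon\in[M_0,M_1]$ and $\Gamma_\varepsilon'(u_\varepsilon)=0$; as recorded before \eqref{eq2.8}, $u_\varepsilon$ is then a weak solution of \eqref{eq2.8}.

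Finally I would arrange positivity: I would test \eqref{eq2.8} against $u_\varepsilon^-$ and use $\widetilde V_\varepsilon\ge1$ together with the sign information $g(s)s\le0$ and $s^2\log s^2\le0$ for $|s|$ small from Lemma~\ref{lem etaq}, the exponential smallness of $\Psi'_\varepsilon$ from Corollary~\ref{cor2.3}, and the uniform $L^\infty$ bound of Corollary~\ref{lem 2.7}, to conclude $u_\varepsilon^-\equiv0$; alternatively, since $\|\cdot\|_\varepsilon$, $F_\varepsilon(x,\cdot)$ and $\Psi_\varepsilon$ are all even, the minimax may be run over nonnegative paths (as $\Gamma_\varepsilon(|u|)=\Gamma_\varepsilon(u)$ and $\||u|\|_\varepsilon=\|u\|_\varepsilon$), which again delivers a nonnegative $u_\varepsilon$. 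With $u_\varepsilon\ge0$, $u_\varepsilon$ is bounded and continuous by \eqref{eq2.9} and elliptic regularity, and the strong maximum principle (applied to the equation \eqref{eq2.8}, equivalently to \eqref{eq2.13}) yields $u_\varepsilon>0$ in $\mathbb R^N$. I expect the minimax itself to be essentially routine once Lemma~\ref{lem 2.5} and Corollary~\ref{cor 2.4} are available; the part deserving the most care is this last step — making the passage from a critical point of the sign-indefinite penalized functional, whose nonlinearity is the untruncated $s\log s^2$ on $\Omega_\varepsilon$, to a strictly positive solution genuinely rigorous.
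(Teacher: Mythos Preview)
Your minimax argument for the level $d_\varepsilon\in[M_0,M_1]$ and for the existence of a critical point is correct and matches the paper's proof. The point that needs repair is positivity.

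Your approach (a), testing \eqref{eq2.8} against $u_\varepsilon^-$, does not close. Since $f_\varepsilon$ is odd, one gets
\[
\|u_\varepsilon^-\|_\varepsilon^2=\Psi_\varepsilon'(u_\varepsilon)u_\varepsilon^-+\int_{\mathbb R^N} f_\varepsilon(x,u_\varepsilon^-)\,u_\varepsilon^-,
\]
and on $\Omega_\varepsilon$ the integrand is $(u_\varepsilon^-)^2\log(u_\varepsilon^-)^2$, which is \emph{positive} wherever $u_\varepsilon^->1$. The $L^\infty$ bound from Corollary~\ref{lem 2.7} only gives $|u_\varepsilon|\le C(M_1)$, not $|u_\varepsilon|\le1$, so the sign information you invoke is unavailable; at best you obtain $\|u_\varepsilon^-\|_\varepsilon^2\le Ce^{-c/\varepsilon}\|u_\varepsilon^-\|_\varepsilon+C\|u_\varepsilon^-\|_\varepsilon^{p}$, which does not force $u_\varepsilon^-=0$.

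Your approach (b) is the correct one and is exactly what the paper does, but the phrase ``running the minimax over nonnegative paths delivers a nonnegative $u_\varepsilon$'' hides the real step. Evenness gives $|h_n|\in\mathcal H_\varepsilon$ with the same maximal value, but the plain mountain-pass theorem only produces a critical point at level $d_\varepsilon$, not one lying in the (non-invariant) cone of nonnegative functions. What is needed is the \emph{localized} minimax principle (Willem, Theorem~2.8): it yields a Palais--Smale sequence $\{u_n\}$ together with $t_n\in[0,1]$ such that $\|u_n-|h_n(t_n)|\|_\varepsilon\to0$. Then Corollary~\ref{cor 2.4} gives $u_n\to u_\varepsilon$ in $H_\varepsilon$, and since $|h_n(t_n)|\ge0$ and the nonnegative cone is closed, $u_\varepsilon\ge0$. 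Strict positivity then follows from V\'azquez's strong maximum principle (\cite{Va}), which is designed to handle the singular zero-order term coming from $u\log u^2$. You should state this localization step explicitly; once you do, your proof coincides with the paper's.
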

\begin{proof}
By Lemma \ref{lem 2.5},
for each fixed $\varepsilon\in (0,\varepsilon_0)$,
$d_\varepsilon\in [M_0, M_1]$. Let $\{h_n\}\subset \mathcal H_\varepsilon$ satisfy $\max_{t\in [0,1]}\Gamma_\varepsilon(h_n(t))\to d_\varepsilon$ as $n\to\infty$. Since  $\Gamma_\varepsilon(u)=\Gamma_\varepsilon(|u|)$ for each $u\in H_\varepsilon$, we have $|h_n|\in \mathcal H_\varepsilon$ and $\max_{t\in [0,1]}\Gamma_\varepsilon(|h_n(t)|)\to d_\varepsilon.$
By Lemma \ref{lem 2.5} and the  minimax principle (see \cite[Theorem 2.8]{willem}),
we can find $\{u_n\}\in H_\varepsilon$ and $\{t_n\}\subset[0,1]$ such that as $n\to\infty$,
$$\Gamma_\varepsilon(u_n)\to d_\varepsilon,\quad
\|\Gamma_\varepsilon'(u_n)\|_{H_\varepsilon^{-1}}\to 0\quad \text{and}\quad \|u_n-|h_n(t_n)|\|_\varepsilon\to 0.
$$
Thus by Corollary \ref{cor 2.4} and $|h_n(t_n)|\geq 0$, there is a nontrivial critical point $u_\varepsilon\in \mathcal H_\varepsilon$ of $\Gamma_\varepsilon$ such that
 $u_n\to u_\varepsilon$ in $H_\varepsilon$ and $u_\varepsilon\geq0$.
By the maximum principle in \cite{Va}, $u_\varepsilon>0$.
\end{proof}

To get more information about the energy level, we  recall some results for the  autonomous logarithmic Schr\"odinger equation,
which is related to the limit problem for \eqref{eq2}.
Up to translations in $\mathbb R^N$, the equation
\begin{equation*}
\begin{cases}
-\Delta v=v\log v^2 \  \ &\text { in}\ \ \mathbb R^N,\\
v(x)\to 0 \ \ &\text { as}\ \ |x|\to\infty,
\end{cases}
\end{equation*}
possesses a unique positive solution
$U(x)=e^{\frac{N}{2}}e^{-\frac{1}{2}|x|^2}.$
Note that for $a\in  \mathbb R$, $U_a(x):=e^{\frac{a}{2}}U(x)$ is the unique positive solution (up to translations) to the equation
\begin{equation*}
\begin{cases}
-\Delta v+a v=v\log v^2 \  \ &\text { in}\ \ \mathbb R^N,\\
v(x)\to 0 \ \ &\text { as}\ \ |x|\to\infty.
\end{cases}
\end{equation*}
$U_a$ is the ground state of the corresponding functional
$$I_a(u)=\frac{1}{2}\int_{\mathbb R^N}|\nabla u|^2+(a+1) u^2
-u^2\log u^2,\quad u\in H^1(\mathbb R^N).$$
That is to say, the following minimizing problem
\begin{equation}\label{3.2}
m(a):=\inf_{u\in\mathcal N_a}I_a(u),
\end{equation}
where
$$\begin{aligned}
\mathcal N_a:&=\Set{u\in \mathcal D\setminus\{0\} | \int_{\mathbb R^N}|\nabla u|^2+a u^2-u^2\log u^2=0},\\
 \mathcal D:&=\Set{u\in H^1(\mathbb R^N)| \int_{\mathbb R^N}|u^2\log u^2|<\infty},
\end{aligned}
$$
is achieved by $U_a$.
It is easy to check that
\begin{equation}\label{3.1}
m(a)=\max_{t\geq0}I_a(tU_a)=I_a(U_a)=\frac{1}{2}\int_{\mathbb R^N}|U_a|^2=\frac{e^a}{2}|U|_2^2,
\end{equation}
 which is a strictly increasing function of $a\in\mathbb R$.
 We refer to \cite{dAvenia2014,Tr} for more information on the unique positive solution $U(x)$.

\begin{lemma}\label{lem2.10}
 Let $V_1\in \mathbb R$, $\varepsilon_n\to 0$, $v_{\varepsilon_n}\in H_{\varepsilon_n}$  be such that
\begin{equation}\label{eq2.21}
\lim_{n\to\infty}\Gamma_{\varepsilon_n}(v_{\varepsilon_n})= m(V_1), \quad \lim_{n\to\infty}\|\Gamma_{\varepsilon_n}'(v_{\varepsilon_n})\|_{H_{\varepsilon_n}^{-1}}=0,
\end{equation}
 Then necessarily $V_1\geq \inf_{x\in \Omega}V(x)$ and $m(V_1)\geq m(\inf_{x\in \Omega}V(x))$.
Moreover, if we assume further that
 $V_1<\inf_{x\in\Omega}V(x)+\log 2,$ then
 there are $y_n\in \mathbb R^N$ and $x_0\in \set{x\in\overline \Omega|V(x)\leq V_1})$
  such that, up to a subsequence,  $\varepsilon_ny_n\to x_0$
 and
  $v_{\varepsilon_n}(\cdot+y_n)$ converges to   $U_{V(x_0)}$ weakly in $H^1(\mathbb R^N)$  and strongly in $L^p(\mathbb R^N)$ for $p\in(2,2^*)$.
\end{lemma}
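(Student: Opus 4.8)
The plan is to argue via concentration–compactness for the sequence $\{v_{\varepsilon_n}\}$, exploiting the variational characterization of $m(a)$ in \eqref{3.1} together with the penalization structure of $\Gamma_\varepsilon$. First I would note that by the hypotheses \eqref{eq2.21} and Lemma \ref{lem 2.3}, the sequence is bounded in $H_{\varepsilon_n}$ (hence in $H^1(\mathbb R^N)$), with uniformly bounded $\int u^2\log u^2$ and $\int|G(u)|$. Passing to a subsequence, $v_{\varepsilon_n}\rightharpoonup v_0$ weakly in $H^1$. The first dichotomy is whether concentration occurs: if $\sup_{y}\int_{B(y,1)}v_{\varepsilon_n}^2\to 0$, then by Lions' lemma $v_{\varepsilon_n}\to 0$ in $L^p$ for $p\in(2,2^*)$, and a standard computation using $\Gamma_{\varepsilon_n}'(v_{\varepsilon_n})v_{\varepsilon_n}=o(\|v_{\varepsilon_n}\|_{\varepsilon_n})$, Lemma \ref{lem etaq}(v), Corollary \ref{cor2.3} and the lower bound $\widetilde V\geq 1$ would force $\|v_{\varepsilon_n}\|_{\varepsilon_n}\to 0$, whence $\Gamma_{\varepsilon_n}(v_{\varepsilon_n})\to 0$; but $m(V_1)>0$ by \eqref{3.1}, a contradiction. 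Hence there are $y_n$ and $\beta>0$ with $\int_{B(y_n,1)}v_{\varepsilon_n}^2\geq\beta$.

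Next I would set $w_n(x):=v_{\varepsilon_n}(x+y_n)$ and pass to the limit $w_n\rightharpoonup w$ weakly in $H^1$, $w\neq 0$. The key point is to identify the limiting equation and the location of $\varepsilon_n y_n$. I would split into two cases according to whether $\{\varepsilon_n y_n\}$ stays in a fixed compact set (up to subsequence, $\varepsilon_n y_n\to x_0$) or escapes to infinity. In the escaping case, because of (V0) the potential term $\widetilde V_{\varepsilon_n}(x+y_n)=\max\{V(\varepsilon_n(x+y_n)),|\varepsilon_n(x+y_n)|^2\}$ is eventually $\geq |\varepsilon_n y_n|^2/C\to\infty$ on bounded sets of $x$, so testing against $w$ and using Fatou forces $w=0$, a contradiction; hence $\varepsilon_n y_n\to x_0$ for some $x_0\in\mathbb R^N$. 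If $x_0\in\Omega$ (more precisely if eventually $\varepsilon_n(x+y_n)$ lands in $\Omega$ for $x$ in compact sets, i.e. $\chi_{\varepsilon_n}(x+y_n)=0$), then the nonlinearity is the genuine $s\log s^2$ and, using the continuity of $V$ together with Corollary \ref{cor2.3}, $w$ weakly solves $-\Delta w+V(x_0)w=w\log w^2$; by uniqueness this gives $w=U_{V(x_0)}$ up to translation, which we absorb into $y_n$. If instead $x_0\in\mathbb R^N\setminus\Omega$, the penalized nonlinearity $g$ applies on the relevant region and one shows (as in the del Pino–Felmer scheme) that the limiting functional value picked up from the concentration is strictly larger than $m(V_1)$ unless in fact the concentration takes place where $V\leq V_1$; combined with the upper bound $\lim\Gamma_{\varepsilon_n}(v_{\varepsilon_n})=m(V_1)$ and the monotonicity of $m$, this is impossible. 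This yields $x_0\in\{x\in\overline\Omega\mid V(x)\leq V_1\}$, and in particular $V_1\geq V(x_0)\geq\inf_\Omega V$, with $m(V_1)\geq m(\inf_\Omega V)$ by monotonicity of $m$.

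For the strong $L^p$ convergence and the final identification under the extra hypothesis $V_1<\inf_\Omega V+\log 2$, I would use a splitting/energy-comparison argument. Writing $w_n=w+r_n$ with $r_n\rightharpoonup 0$, Brezis–Lieb type expansions for $\int w_n^2\log w_n^2$ (valid since the sequence is bounded in $H^1$ and in the relevant $L\log L$ sense by Lemma \ref{lem 2.3}) and for the quadratic and $G$ terms give $\Gamma_{\varepsilon_n}(v_{\varepsilon_n})\geq I_{V(x_0)}(w)+(\text{energy of the residual } r_n)+o(1)$. Since $w=U_{V(x_0)}$ realizes $m(V(x_0))\geq m(\inf_\Omega V)$ and, provided $V_1<\inf_\Omega V+\log 2$, the gap $m(V_1)-m(V(x_0))=\tfrac12|U|_2^2(e^{V_1}-e^{V(x_0)})<\tfrac12|U|_2^2 e^{V(x_0)}$ is too small to accommodate a second bubble (whose minimal energy would be at least $m(\inf_\Omega V)=\tfrac12|U|_2^2 e^{\inf_\Omega V}$, and one checks $e^{V_1}-e^{V(x_0)}<e^{\inf_\Omega V}$ fails only if a further bubble could form), the residual energy must vanish, forcing $r_n\to 0$ in the appropriate norm and hence $w_n\to U_{V(x_0)}$ strongly in $L^p$, $p\in(2,2^*)$, and weakly in $H^1$. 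The main obstacle I anticipate is this last no-second-bubble / energy-gap step: one must handle the penalized nonlinearity $f_{\varepsilon_n}$ carefully near $\partial\Omega_{\varepsilon_n}$, control the error terms $\Psi_{\varepsilon_n}$ uniformly via Corollary \ref{cor2.3}, and make precise the Brezis–Lieb decomposition for the non-$C^2$ logarithmic term at the origin — exactly the role played by the numerical constant $\log 2$, which comes from comparing $e^{V_1}$ with $2e^{V(x_0)}\geq 2e^{\inf_\Omega V}$.
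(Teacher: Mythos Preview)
Your overall concentration--compactness plan is sound and parallels the paper's, but there are two concrete gaps.

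\textbf{The boundary case.} You split according to whether $x_0\in\Omega$ or $x_0\in\mathbb R^N\setminus\Omega$, but the statement allows $x_0\in\overline\Omega$, and the delicate case is $x_0\in\partial\Omega$ (equivalently, $\mathrm{dist}(y_n,\partial\Omega_{\varepsilon_n})$ staying bounded). Then $\chi_{\varepsilon_n}(\cdot+y_n)$ converges to the characteristic function $\widetilde\chi$ of a half-space $H$, and the weak limit $w$ solves the \emph{mixed} equation $-\Delta w+V(x_0)w=(1-\widetilde\chi)w\log w^2+\widetilde\chi\,g(w)$, not the pure logarithmic one. The paper resolves this by testing this limit equation against $\nabla w\cdot\vec n(x_0)$ and integrating (a Pohozaev-type identity), which yields $\int_{\partial H}\big(w^2\log w^2-w^2-2G(w)\big)=0$; since the integrand is nonnegative, $|w|\le e^{-1}$ on $\partial H$, and then the maximum principle forces $|w|\le e^{-1}$ throughout $H$, so $g(w)=w\log w^2$ there and the mixed equation collapses to the genuine one. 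This step is entirely missing from your plan, and without it you cannot identify $w$ with $U_{V(x_0)}$ when $x_0\in\partial\Omega$. (Your treatment of $x_0\notin\overline\Omega$ is also too vague; the paper dispatches it in one line: if $\mathrm{dist}(y_n,\Omega_{\varepsilon_n})\to\infty$, then testing $\Gamma_{\varepsilon_n}'(v_{\varepsilon_n})$ against $w(\cdot-y_n)$ and using $\widetilde V\ge1$, $g(s)s\le0$ gives $\int|\nabla w|^2+w^2\le0$.)

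\textbf{Sign of the weak limit.} You write ``by uniqueness this gives $w=U_{V(x_0)}$'', but uniqueness is for \emph{positive} solutions. You must first rule out sign-changing $w$. The paper does this under the extra hypothesis $V_1<\inf_\Omega V+\log 2$: if $w^\pm\neq0$ then $\widetilde I(w)=\max_{t\ge0}\widetilde I(tw^+)+\max_{t\ge0}\widetilde I(tw^-)\ge 2m(V(x_0))\ge 2m(\inf_\Omega V)>m(V_1)$, contradicting $\widetilde I(w)\le m(V_1)$.

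Finally, your Brezis--Lieb route for the $u^2\log u^2$ term is unnecessary and, as you suspect, awkward near $0$. The paper avoids it altogether by working with the identity
\[
\Gamma_\varepsilon(v_\varepsilon)-\tfrac12\Gamma_\varepsilon'(v_\varepsilon)v_\varepsilon
=\tfrac12\int\big((1-\chi_\varepsilon)v_\varepsilon^2+\chi_\varepsilon(g(v_\varepsilon)v_\varepsilon-2G(v_\varepsilon))\big)+o(1),
\]
whose integrand is nonnegative (Lemma~\ref{lem etaq}\,(v)); Fatou then gives $m(V_1)\ge\widetilde I(w)\ge m(V(x_0))$ directly, and the same integrand handles the no-second-bubble step: a second profile would contribute at least $m(\inf_\Omega V)$, so two bubbles cost $\ge 2m(\inf_\Omega V)>m(V_1)$. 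This is where the constant $\log 2$ enters, via $e^{V_1}<2e^{\inf_\Omega V}$.
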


\begin{proof}
 For clarity, we write $\varepsilon=\varepsilon_n$.
 By Lemma \ref{lem 2.3}, we know for some constant $C_1>0$ independent of $\varepsilon$,
\begin{equation}\label{3.3}
\|v_\varepsilon\|_{\varepsilon}\leq C_1,\quad
\int_{\mathbb R^N} \big(v_\varepsilon^2\log v_\varepsilon^2\big)^-\leq C_1.
\end{equation}
On the other hand, by \eqref{3.1}--\eqref{3.3}, Lemma \ref{lem etaq} (v) and Corollary \ref{cor2.3}, we have
$$\begin{aligned}
0<2m(V_1)=\lim_{\varepsilon\to0}\big(2\Gamma_\varepsilon(v_\varepsilon)-\Gamma_\varepsilon'(v_\varepsilon)v_\varepsilon\big)
=&\lim_{\varepsilon\to0}\int_{\mathbb R^N}\big((1-\chi_\varepsilon)v_\varepsilon^2+\chi_\varepsilon(g(v_\varepsilon)v_\varepsilon-2G(v_\varepsilon))\big)\\
\leq& \liminf_{\mathbb R^N}\int_{\mathbb R^N} v_\varepsilon^2\leq \liminf_{\varepsilon\to0}\|v_\varepsilon\|_\varepsilon^2.
\end{aligned}
$$
Hence, for each fixed $p\in (2,2^*)$, there is
$C_p>0$ such that,
 $$\liminf_{\varepsilon\to0}\int_{\mathbb R^N}|v_\varepsilon|^p\geq C_p\liminf_{\varepsilon\to0}\int_{\mathbb R^N}f_\varepsilon(x,v_\varepsilon)v_\varepsilon
=C_p\liminf_{\varepsilon\to0}\big(\|v_\varepsilon\|_\varepsilon^2-\Gamma_\varepsilon'(v_\varepsilon)v_\varepsilon\big)>0.$$
Then, by  P.L. Lions' lemma (\cite{lions}), there is $y_\varepsilon\in \mathbb R^N$ such that \begin{equation}\label{eq2.19}
 \liminf_{\varepsilon\to0}
 \int_{B(y_\varepsilon,1)}|v_\varepsilon|^2>0.
 \end{equation}
Up to a subsequence if necessary, we assume
$v_\varepsilon(x+y_\varepsilon)\rightharpoonup v\neq 0$
weakly in $H^1(\mathbb R^N)$.
By Fatou's Lemma
and \eqref{3.3}, $v\in \mathcal D$.
If $\lim_{\varepsilon\to 0}\text{dist}( y_\varepsilon,\Omega_\varepsilon)\to\infty$, then
$\lim_{\varepsilon\to0}\chi_\varepsilon(x+y_\varepsilon)= 1$ for each $x\in \mathbb R^N$.
 By  Corollary \ref{cor2.3}, $\widetilde V\geq1$ and
$\lim_{\varepsilon\to0}\Gamma_\varepsilon'(v_\varepsilon)v(\cdot-y_\varepsilon)=0,$
we obtain
$$\int_{\mathbb R^N}|\nabla v|^2+v^2 \leq \int_{\mathbb R^N}g(v)v\leq 0.$$
Therefore, $\limsup_{\varepsilon\to 0}\text{dist}( y_\varepsilon,\Omega_\varepsilon)<\infty$ and especially,  $\lim_{\varepsilon\to 0}\text{dist}(\varepsilon y_\varepsilon,\Omega)=0$.

Then up to a subsequence
we may assume
$\varepsilon y_\varepsilon\to x_0\in \overline \Omega$.
By Corollary \ref{cor2.3}, \eqref{eq2.21} and \eqref{3.3}, it is easy to check that $v\in \mathcal D$ is a solution to
\begin{equation}\label{eq2.23}
-\Delta v+V(x_0)v=(1-\widetilde\chi)v\log v^2+\widetilde \chi g(v),
\end{equation}
where $0\leq \widetilde \chi(x) \leq 1$ is the limit function of $\chi_{\varepsilon}(x+y_\varepsilon)$, which is identically $0$  if ${\rm dist}(y_\varepsilon,\partial\Omega_\varepsilon)\to\infty$, or otherwise the characteristic function of the half space
$$H=\set{y\in\mathbb R^N | y\cdot \vec n(x_0)\geq0},$$
with
$\vec n (x_0)$  the outward normal vector to $\partial \Omega$ at $x_0$.
 We consider  the functional corresponding to \eqref{eq2.23} in $\mathcal D$:
\begin{equation*}
\widetilde I(u)=\frac{1}{2}\int_{\mathbb R^N}|\nabla u|^2+V(x_0)u^2
-\int_{\mathbb R^N}\frac{1}{2}(1-\widetilde\chi )\big(u^2\log u^2-u^2\big)+\widetilde\chi G(u).
\end{equation*}
By Lemma \ref{lem etaq} (iv),
$G(s)-\frac12(s^2\log s^2-s^2)\leq 0$.
Then by the monotonicity of $g(s)/s$, we can check that
$$\widetilde I(v)=\max_{t\geq 0}\widetilde I(tv)\geq \max_{t\geq 0} I_{V(x_0)}(tv)\geq m(V(x_0)).$$
Now by Lemma \ref{lem etaq} (v), Corollary \ref{cor2.3} and Fatou's lemma, we have
\begin{equation}\label{3.9}
\begin{split}
m(V_1)=\liminf_{\varepsilon\to0}\Gamma_{\varepsilon}(v_\varepsilon)
&=\liminf_{\varepsilon\to0} \Big(\Gamma_{\varepsilon}(v_\varepsilon)-\frac{1}{2}\Gamma_{\varepsilon}'(v_\varepsilon)v_\varepsilon\Big)\\
&\geq \liminf_{\varepsilon\to0}  \frac12\int_{\mathbb R^N}\Big((1-\chi_\varepsilon )v_\varepsilon^2+\chi_\varepsilon(g(v_\varepsilon)v_\varepsilon-2G(v_\varepsilon))\Big)
\\
&\geq
\frac{1}{2}\int_{\mathbb R^N} \Big((1-\widetilde \chi)v^2+\widetilde \chi(g(v)v-2G(v))\Big)\\
&=\widetilde I(v)\geq m(V(x_0)).
\end{split}
\end{equation}
Then
$m(V_1)\geq m(\inf_{x\in\Omega}V(x))$
and by the  monotonicity of  $m(\cdot)$, $V_1\geq \inf_{x\in\Omega}V(x)$.

Next we assume further that  $V_1<\inf_{x\in\Omega}V(x)+\log 2$.
To proceed, we  claim that the weak limit $v$ does not change sign. Otherwise,
we can check that
$$\widetilde I(v)=\max_{t\geq 0}\widetilde I(tv^+)+\max_{t\geq 0}\widetilde I(tv^-)\geq 2 m(V(x_0))
=e^{V(x_0)}|U|_2^2>\frac{e^{V_1}}2|U|_2^2=m(V_1),$$
which contradicts to $\widetilde I(v)\leq m(V_1)$
by \eqref{3.9}.
We remark also that $v\in W^{2,p}(\mathbb R^N)\cap C^{1,\sigma}(\mathbb R^N)$ for all  $p\in(1,\infty)$ and $\sigma\in(0,1)$ by the regularity theory.

If $\widetilde\chi\equiv0$,   there directly holds
\begin{equation}\label{2.26}
-\Delta v+V(x_0)v=v\log v^2.
\end{equation}
For the case that $\widetilde\chi$ is the characteristic function of  $H$, we test \eqref{eq2.23} by $\nabla v\cdot\vec n(x_0)$ and integrate on $\mathbb R^N$.
Noting that $-\vec n(x_0)$ is the outward unit normal vector to $\partial H$, by divergence theorem,  we have
$$ \int_{\partial H} v^2\log v^2-v^2-2G(v)=0.$$
Since $s^2\log s^2-s^2-G(s)\geq 0$, the above formula implies $v^2\log v^2-v^2=2G(v)$ on $\partial H$ and hence
$v\leq e^{-1}$ on $\partial H$.
Noting also that $-\Delta v+V(x_0)v=\widetilde\chi g(v)\leq 0$ in $H$, we can apply the maximum principle to obtain $v\leq e^{-1}$ in $H$ and
thus  $g(v)=v\log v^2$ in $H$. Therefore,
 $v$ weakly solves \eqref{2.26} with $m(V(x_0))\leq I_{V(x_0)}(v)\leq m(V_1)$, which implies $V(x_0)\leq V_1$. Moreover, by the maximum principle in \cite{Va}, $v>0$.

Next we show $|v_\varepsilon(\cdot +y_\varepsilon)- v|_{p}\to0$. If not, up to a subsequence, we assume $\lim_{\varepsilon\to0}|v_\varepsilon(\cdot +y_\varepsilon)- v|_{p}> 0$ for some $p\in (2,2^*)$. Then
we can find another sequence of $y_\varepsilon^1\in\mathbb R^N$
satisfying $|y_\varepsilon^1-y_\varepsilon|\to\infty$, $\varepsilon y_\varepsilon^1\to x_1\in \overline \Omega$ and
\eqref{eq2.19}  for $y_\varepsilon^1$.
 Therefore, $v_\varepsilon(\cdot +y_\varepsilon^1)\rightharpoonup v_1\neq 0$, where $v_1>0$ solves $-\Delta v+V(x_1)v=v\log v^2$.
Then similarly to \eqref{3.9},
we can check that
\begin{equation*}
\begin{split}
\liminf_{\varepsilon\to0}\Gamma_{\varepsilon}(v_\varepsilon)
&\geq \liminf_{\varepsilon\to0}  \frac12\int_{\mathbb R^N}\Big((1-\chi_\varepsilon )v_\varepsilon^2+\chi_\varepsilon(g(v_\varepsilon)v_\varepsilon-2G(v_\varepsilon))\Big)
\\
&\geq
\frac{1}{2}\int_{\mathbb R^N} v^2+v_1^2= m(V(x_0))+m(V(x_1))\geq 2m(\inf_{x\in \Omega}V(x))>m(V_1),
\end{split}
\end{equation*}
which is a contradiction. Therefore,
$|v_\varepsilon(\cdot +y_\varepsilon)- v|_{p}\to0$.
 Replacing $y_\varepsilon$ by $y_\varepsilon+y$ with $y$ the maximum point of $v$, we can assume without loss of generality that  $v= U_{V(x_0)}$. This completes the proof.
\end{proof}

\section{Proof of Theorem \ref{th1.1}}\label{sec3}
In this section, we assume (V0) and (V1). Without loss of generality, we can assume that $\partial\Omega$ is smooth,
 $0\in\mathcal V\subset\Omega\subset B(0,R_0/2)$ for some $R_0>0$ and $V\geq 1$  in  $B(0,R_0).$
 Then by Proposition \ref{pro 2.6}, $\Gamma_\varepsilon$ defined in Section \ref{sec2} has a critical point $u_\varepsilon>0$ for $\varepsilon\in (0,\varepsilon_0)$.
 To prove it is actually a solution to the original problem for small $\varepsilon$, we first estimate the upper energy   bounds.

\begin{lemma}\label{lem 3.1}
Let $d_\varepsilon$ be defined in \eqref{eq2.12}. Then
$$\limsup_{\varepsilon\to0}d_\varepsilon\leq m(V_0),$$
 where $m(\cdot)$ is the function defined in \eqref{3.2}.
\end{lemma}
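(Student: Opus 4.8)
The plan is to produce, for each small $\varepsilon$, an admissible mountain--pass path in $\mathcal H_\varepsilon$ along which $\Gamma_\varepsilon$ stays no larger than $m(V_0)+o_\varepsilon(1)$. Since $0\in\mathcal V\subset\Omega$ and $\Omega$ is open, I would fix a cutoff $\varphi\in C_0^\infty(\Omega)$ with $0\le\varphi\le1$ and $\varphi\equiv1$ near $0$, and take as test function $w_\varepsilon(x)=\varphi(\varepsilon x)\,U_{V_0}(x)$, where $U_{V_0}=e^{V_0/2}U$ is the ground state appearing in \eqref{3.1}. Its support lies in $\Omega_\varepsilon\subset B(0,R_0\varepsilon^{-1})$, where $\widetilde V_\varepsilon=V_\varepsilon$ is bounded, so $w_\varepsilon\in H_\varepsilon$; moreover, on $\mathrm{supp}\,w_\varepsilon$ we have $\chi_\varepsilon\equiv0$ and $\overline V_\varepsilon\equiv0$, hence $\Psi_\varepsilon(tw_\varepsilon)=0$ and $F_\varepsilon(x,tw_\varepsilon)=\tfrac12\big((tw_\varepsilon)^2\log(tw_\varepsilon)^2-(tw_\varepsilon)^2\big)$ for all $t\ge0$. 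Thus $\Gamma_\varepsilon(tw_\varepsilon)$ reduces to the unpenalized quantity $\tfrac{t^2}{2}\int\!\big(|\nabla w_\varepsilon|^2+V(\varepsilon x)w_\varepsilon^2\big)-\tfrac12\int\!\big((tw_\varepsilon)^2\log(tw_\varepsilon)^2-(tw_\varepsilon)^2\big)$.

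Next I would scale in $t$ explicitly. Setting $\alpha_\varepsilon=\int\big(|\nabla w_\varepsilon|^2+V(\varepsilon x)w_\varepsilon^2-w_\varepsilon^2\log w_\varepsilon^2+w_\varepsilon^2\big)$ and $\beta_\varepsilon=\int w_\varepsilon^2$, one gets $\Gamma_\varepsilon(tw_\varepsilon)=\tfrac{t^2}{2}\alpha_\varepsilon-\tfrac{t^2\log t^2}{2}\beta_\varepsilon$, so that $\max_{t\ge0}\Gamma_\varepsilon(tw_\varepsilon)$ is attained at $t_\varepsilon^2=e^{(\alpha_\varepsilon-\beta_\varepsilon)/\beta_\varepsilon}$ and equals $\tfrac{\beta_\varepsilon}{2}e^{(\alpha_\varepsilon-\beta_\varepsilon)/\beta_\varepsilon}$. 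Since $0\le w_\varepsilon\le U_{V_0}$, $w_\varepsilon\to U_{V_0}$ pointwise, the cutoff contributes only $O(\varepsilon)$ to the gradient, $V$ is bounded on $\overline\Omega$, and $V(\varepsilon x)\to V(0)=V_0$, dominated convergence gives $\beta_\varepsilon\to\beta:=|U_{V_0}|_2^2>0$ and $\alpha_\varepsilon\to\alpha:=\int\big(|\nabla U_{V_0}|^2+V_0U_{V_0}^2-U_{V_0}^2\log U_{V_0}^2\big)+|U_{V_0}|_2^2$; for the logarithmic term one dominates $w_\varepsilon^2|\log w_\varepsilon^2|$ by $U_{V_0}^{2-\delta}\in L^1$ using $s^2|\log s^2|\le C_\delta s^{2-\delta}$ for small $s$. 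But $U_{V_0}\in\mathcal N_{V_0}$, so the bracket vanishes and $\alpha=\beta$, whence $\max_{t\ge0}\Gamma_\varepsilon(tw_\varepsilon)\to\tfrac{\beta}{2}=\tfrac12|U_{V_0}|_2^2=m(V_0)$ by \eqref{3.1}.

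It remains to convert this into a bound on $d_\varepsilon$. Because $t_\varepsilon^2=e^{(\alpha_\varepsilon-\beta_\varepsilon)/\beta_\varepsilon}\to1$, the maximizers lie in a fixed interval $[0,T]$; and since $\Gamma_\varepsilon(Tw_\varepsilon)=\tfrac{T^2}{2}\alpha_\varepsilon-\tfrac{T^2\log T^2}{2}\beta_\varepsilon\to-\infty$ as $T\to\infty$, uniformly for small $\varepsilon$ (as $\beta_\varepsilon$ stays bounded away from $0$), we may fix $T\ge1$ so large that $\Gamma_\varepsilon(Tw_\varepsilon)<-2$ for all small $\varepsilon$. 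Then $h(s):=sTw_\varepsilon$ belongs to $\mathcal H_\varepsilon$, so $d_\varepsilon\le\max_{s\in[0,1]}\Gamma_\varepsilon(sTw_\varepsilon)=\max_{t\in[0,T]}\Gamma_\varepsilon(tw_\varepsilon)$, and the latter — using the uniform convergence of $t\mapsto\Gamma_\varepsilon(tw_\varepsilon)$ on $[0,T]$ that follows from the explicit formula and $\alpha_\varepsilon\to\alpha$, $\beta_\varepsilon\to\beta$ — tends to $m(V_0)$. Hence $\limsup_{\varepsilon\to0}d_\varepsilon\le m(V_0)$. The only point requiring genuine care is the dominated--convergence step for $w_\varepsilon^2\log w_\varepsilon^2$, where $\log$ degenerates as $w_\varepsilon\to0$; this is precisely what the elementary bound $s^2|\log s^2|\lesssim s^{2-\delta}$ and the Gaussian decay of $U_{V_0}$ dispose of, and everything else is a direct computation with the explicit profile.
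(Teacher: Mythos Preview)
Your argument is correct and follows essentially the same idea as the paper's proof: build an admissible path from a truncated copy of the ground state $U_{V_0}$ centered at $0\in\mathcal V$, use that its support lies in $\Omega_\varepsilon$ to kill both penalizations, and compute the maximum along the ray explicitly. The only difference is technical: the paper first fixes an $\varepsilon$--independent compactly supported approximant $u_k$ of $U_{V_0}$ and runs a double limit ($\varepsilon\to0$, then $k\to\infty$), whereas you use the $\varepsilon$--dependent truncation $w_\varepsilon=\varphi(\varepsilon\cdot)U_{V_0}$ and a single limit, which is slightly cleaner but requires (as you correctly observe) a dominated--convergence argument for $w_\varepsilon^2\log w_\varepsilon^2$ that the paper sidesteps.
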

\begin{proof}
Let $u_0=e^{\frac{V_0}{2}}U$. Similarly to \eqref{2.5}, one can find $t_0>0$ such that
$I_{V_0}(tu_0)<-4$ for $t\geq t_0$. Then for any $k\geq 1$,
there is $u_k\in C_0^\infty(\mathbb R^N)$ such that $\sup_{t\in[0,t_0]}|I_{V_0}(tu_0)-I_{V_0}(tu_k)|\leq 1/k.$
In particular, $I_{V_0}(t_0u_k)<-3$. Since $u_k$ has compact support, when $\varepsilon$ is small enough, there holds $\Psi_\varepsilon(tu_k)=0$, $f_\varepsilon(x,tu_k)=tu_k\log(tu_k)^2$ for $t\geq 0$, and hence
$$\Gamma_\varepsilon(tu_k)=I_{V_0}(tu_k)
+\frac{1}{2}\int_{\mathbb R^N} (\widetilde V_\varepsilon(x)-V_0)|tu_k|^2.$$
Therefore $\lim_{\varepsilon\to0}\Gamma_\varepsilon(tu_k)=I_{V_0}(tu_k)$ uniformly holds for $t\in[0,t_0]$, which leads us to the fact that
 $\sup_{t\in[0,t_0]}|\Gamma_{\varepsilon}(tu_k)-I_{V_0}(tu_k)|\leq 1/k$
and in particular $\Gamma_\varepsilon(t_0u_k)<-2$ when $\varepsilon$ is small.
For $s\in[0,1]$, set $\gamma_k(s):=st_0u_k$.
Then for $\varepsilon$ small, there holds $\gamma_k\in\mathcal H_{\varepsilon}$,
which implies by \eqref{3.1},
$$\limsup_{\varepsilon\to0}d_\varepsilon
\leq \limsup_{\varepsilon\to0}\sup_{s\in[0,1]}\Gamma_\varepsilon(\gamma_k(s))
\leq \sup_{t\in[0,t_0]}I_{V_0}(tu_0)+2/k
\leq m(V_0)+2/k.$$
Then the conclusion follows from the arbitrary choice of $k$.
\end{proof}

Next, we shall focus on the localization of $u_\varepsilon$.
Lemma \ref{lem 3.1} and the strict monotonicity of $m(\cdot)$ will ensure that the solution $u_\varepsilon$ is localized around the set $\mathcal V$ when $\varepsilon$ is small.

\begin{lemma}\label{lem 3.2} Let $u_\varepsilon$ be obtained in Proposition \ref{pro 2.6}. Then for any $\delta>0$, there holds
$$\lim_{\varepsilon\to0}\|u_\varepsilon\|_{L^\infty(\mathbb R^N\setminus (\mathcal V^\delta)_\varepsilon)}=0.$$
\end{lemma}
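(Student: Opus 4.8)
The plan is to argue by contradiction using a concentration–compactness analysis together with the energy bound $\limsup_{\varepsilon\to0}d_\varepsilon\le m(V_0)$ from Lemma \ref{lem 3.1} and the strict monotonicity of $m(\cdot)$. Suppose the conclusion fails: then there is $\delta>0$, a sequence $\varepsilon_n\to0$, and points $x_n\in\mathbb R^N$ with $\mathrm{dist}(\varepsilon_n x_n,\mathcal V)\ge\delta$ such that $|u_{\varepsilon_n}(x_n)|\ge a>0$ for all $n$. By the uniform sub-solution estimate of Corollary \ref{lem 2.7}, this forces $\|u_{\varepsilon_n}\|_{L^2(B(x_n,1))}\ge c>0$, so mass of $u_{\varepsilon_n}$ does not vanish around $x_n$. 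After translating by $y_n:=x_n$ and passing to a subsequence, $u_{\varepsilon_n}(\cdot+y_n)\rightharpoonup w\ne 0$ weakly in $H^1(\mathbb R^N)$ (the $H^1$-bound coming from Lemma \ref{lem 2.3} via $\Gamma_{\varepsilon_n}(u_{\varepsilon_n})=d_{\varepsilon_n}\in[M_0,M_1]$).

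Next I would identify the limit equation for $w$. Exactly as in the proof of Lemma \ref{lem2.10}, using Corollary \ref{cor2.3} to kill the $\Psi_{\varepsilon_n}$-terms and the local convergence to pass to the limit, one shows that either $\mathrm{dist}(\varepsilon_n y_n,\Omega)\to0$ with $\varepsilon_n y_n\to x_1\in\overline\Omega$, $V(x_1)\le$ the relevant energy threshold, and $w>0$ solves an equation of the form $-\Delta w+V(x_1)w=(1-\widetilde\chi)w\log w^2+\widetilde\chi g(w)$ (with $\widetilde\chi$ either $0$ or the characteristic function of a half-space, handled as in Lemma \ref{lem2.10} to conclude $w$ actually solves $-\Delta w+V(x_1)w=w\log w^2$, i.e. $\widetilde I(w)\ge m(V(x_1))$); or else $\mathrm{dist}(y_n,\Omega_{\varepsilon_n})\to\infty$, in which case $\chi_{\varepsilon_n}(\cdot+y_n)\to1$ and testing against $w$ gives $\int(|\nabla w|^2+w^2)\le\int g(w)w\le0$, contradicting $w\ne0$. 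So the first alternative holds; moreover since $\mathrm{dist}(\varepsilon_n x_n,\mathcal V)\ge\delta$, the limit point $x_1\notin\mathcal V$, hence either $x_1\in\partial\Omega$ or $x_1\in\Omega$ with $V(x_1)>V_0$ — in the boundary case the penalization forces $V(x_1)\le$ the asymptotic energy level and one still gets $V(x_1)>V_0$ unless $x_1\in\mathcal V$. Either way $V(x_1)>V_0$.

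The final step is the energy accounting. Since $u_{\varepsilon_n}>0$ also carries a concentration profile near $\mathcal V$ — indeed by construction $d_{\varepsilon_n}\ge M_0>0$ and one can run the same Lions-type argument to produce a primary concentration point $y_n^0$ with $\varepsilon_n y_n^0\to x_0\in\overline\Omega$, $V(x_0)\ge V_0$, contributing at least $m(V(x_0))\ge m(V_0)$ to the energy — the fact that $|y_n-y_n^0|\to\infty$ (because $\varepsilon_n y_n\to x_1\ne x_0$ when $x_1\notin\mathcal V$, or more directly because the two bubbles sit at positive distance $\ge\delta/\varepsilon_n$ apart in rescaled coordinates) lets us add the two energy contributions: by Brezis–Lieb / Fatou applied to $2\Gamma_{\varepsilon_n}(u_{\varepsilon_n})-\Gamma_{\varepsilon_n}'(u_{\varepsilon_n})u_{\varepsilon_n}$ and splitting the integral as in \eqref{3.9},
\begin{equation*}
m(V_0)\ge\limsup_{n\to\infty}d_{\varepsilon_n}=\limsup_{n\to\infty}\Gamma_{\varepsilon_n}(u_{\varepsilon_n})\ge m(V(x_0))+m(V(x_1))\ge 2m(V_0)>m(V_0),
\end{equation*}
a contradiction. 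Since $\delta>0$ was arbitrary this proves $\|u_\varepsilon\|_{L^\infty(\mathbb R^N\setminus(\mathcal V^\delta)_\varepsilon)}\to0$.

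The main obstacle is making the two-bubble energy splitting rigorous: one must ensure that the secondary bubble near $x_1$ does not "borrow" mass from, or overlap with, the primary bubble near $x_0$, which requires the separation $|y_n-y_n^0|\to\infty$ together with a careful use of the splitting lemma for the logarithmic nonlinearity (the term $\int u^2\log u^2$ is not a power nonlinearity, so one leans on Lemma \ref{lem etaq}(v), which gives $g(s)s-2G(s)\le s^2$, to reduce the relevant quantity to the quadratic integral $\int u_{\varepsilon_n}^2$ before splitting). A secondary technical point is that if $x_1\in\partial\Omega$ the half-space limit problem appears; this is resolved exactly by the Pohozaev/normal-derivative argument already used in Lemma \ref{lem2.10}, yielding $w\le e^{-1}$ on the boundary hyperplane and hence the full autonomous equation with energy $\ge m(V(x_1))$.
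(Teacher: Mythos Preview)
Your overall strategy (contradiction via concentration--compactness and the energy bound $\limsup d_\varepsilon\le m(V_0)$) is the right one and matches the paper's, but you over-engineer the contradiction and introduce a genuine gap in the process. The problem is your ``primary concentration profile near $\mathcal V$'': the Lions-type argument produces \emph{some} concentration point $y_n^0$ with $\varepsilon_n y_n^0\to x_0\in\overline\Omega$, but nothing in what you wrote forces $x_0\in\mathcal V$ or even $x_0\neq x_1$. In particular $y_n^0$ may coincide with your $y_n=x_n$, in which case there is a single bubble and your two-bubble splitting $m(V_0)\ge m(V(x_0))+m(V(x_1))\ge 2m(V_0)$ simply is not available. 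Your parenthetical justification (``the two bubbles sit at positive distance $\ge\delta/\varepsilon_n$ apart'') presupposes what has to be shown, namely that the primary bubble lands near $\mathcal V$.

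The fix is to drop the second bubble entirely: a \emph{single} bubble at $x_1$ already gives the contradiction. The Fatou-type estimate \eqref{3.9} applied to the translate by $y_n=x_n$ yields $m(V_0)\ge\limsup_n d_{\varepsilon_n}\ge \widetilde I(w)\ge m(V(x_1))$, and since $x_1\in\overline\Omega\setminus\mathcal V$ (either $x_1\in\Omega$ with $V(x_1)>V_0$, or $x_1\in\partial\Omega$ with $V(x_1)\ge\min_{\partial\Omega}V>V_0$ by (V1)), strict monotonicity of $m(\cdot)$ gives $m(V(x_1))>m(V_0)$, a contradiction. This is exactly what Lemma \ref{lem2.10} encodes when applied with $V_1=V_0=\inf_\Omega V$: the ``moreover'' clause (note $V_0<V_0+\log 2$ is automatic) forces the concentration point into $\set{x\in\overline\Omega|V(x)\le V_0}=\mathcal V$ and gives strong $L^p$ convergence. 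The paper's proof therefore just invokes Lemma \ref{lem2.10} to obtain $L^p$ decay outside $(\mathcal V^\delta)_\varepsilon$, then upgrades to $L^\infty$ via Corollary \ref{lem 2.7}---no contradiction argument, no second bubble.
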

\begin{proof}
By  Lemma \ref{lem 3.1}, we know that any subsequence of $u_\varepsilon$  satisfies \eqref{eq2.21}
with $V_1=V_0=\inf_{\Omega}V$ in Lemma \ref{lem2.10}. Therefore, by Lemma \ref{lem2.10},  for any $\delta>0$,
$\lim_{\varepsilon\to0}\|u_\varepsilon\|_{L^p(\mathbb R^N\setminus (\mathcal V^\delta)_\varepsilon)}=0,$ where  $p\in (2,2^*)$.
Then the conclusion follows from  Corollary \ref{lem 2.7}.
\end{proof}
By Lemma \ref{lem 3.2}, we know that
$f_\varepsilon(x,u_\varepsilon)=u_\varepsilon$ for $\varepsilon$ small.
To drop the other penalization
terms, the key point is the following
Gaussian decay estimate for $u_\varepsilon$.
\begin{proposition}\label{pro 3.3}
For each $\delta >0$, there exist $C, c>0$ such that
$$|u_{\varepsilon}(x)|\leq  C\exp\big\{-c({\rm dist}(x, (\mathcal V^\delta)_\varepsilon))^2\big\}\quad \text{for}\ \varepsilon\in (0,\varepsilon_0)\ \text{and}\ x\in\mathbb R^N.$$
\end{proposition}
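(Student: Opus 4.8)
The plan is to derive the estimate from a maximum‑principle comparison of $u_\varepsilon$ against a Gaussian‑type supersolution. The mechanism exploited is that on the region where $u_\varepsilon$ is already small, the factor $-\log u_\varepsilon^2$ behaves like a large, indeed quadratically growing, coefficient, which is exactly what sustains a decay of the form $e^{-c(\cdot)^2}$ rather than a mere exponential.

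First, fix $\delta>0$. By Lemma~\ref{lem 3.2} applied with $\delta/2$ there is $\varepsilon_1\in(0,\varepsilon_0)$ such that $u_\varepsilon\le\sigma$ on $\Sigma_\varepsilon:=\mathbb R^N\setminus(\mathcal V^{\delta/2})_\varepsilon$ for $\varepsilon<\varepsilon_1$, where $\sigma\in(0,e^{-1/2})$ is a fixed small constant (say $2\log\tfrac1\sigma>N+2$). On $\Sigma_\varepsilon$ one has $f_\varepsilon(x,u_\varepsilon)=u_\varepsilon\log u_\varepsilon^2$ since $g(s)=s\log s^2$ for $|s|\le e^{-1}$, and inserting this into \eqref{eq2.8}, together with $\widetilde V_\varepsilon\ge1$, the signs $\overline V_\varepsilon\le0$, $\eta_\varepsilon\le0$, and the bounds $\overline V_\varepsilon(x)\widehat\eta_\varepsilon(x,u_\varepsilon)=0$ for $|\varepsilon x|<R_0$ and $|\overline V_\varepsilon(x)\widehat\eta_\varepsilon(x,u_\varepsilon)u_\varepsilon|\le Ce^{-c\varepsilon|x|^2}$ for $|\varepsilon x|\ge R_0$ (from Remark~\ref{rek2.9}, Corollary~\ref{cor2.3}, Lemma~\ref{lem etaq}), I obtain the differential inequality
$$-\Delta u_\varepsilon+u_\varepsilon\Bigl(1+\log\tfrac1{u_\varepsilon^2}\Bigr)\ \le\ Ce^{-c\varepsilon|x|^2}\,\mathbf 1_{\{|\varepsilon x|\ge R_0\}}\qquad\text{in }\Sigma_\varepsilon.$$
The zeroth‑order coefficient $1+\log\tfrac1{u_\varepsilon^2}$ is $\ge1+2\log\tfrac1\sigma$ and grows further wherever $u_\varepsilon$ is smaller; this self‑improvement is what produces the quadratic exponent.

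Next I build the comparison function. Let $\rho_\varepsilon(x):=\operatorname{dist}(x,(\mathcal V^{\delta/2})_\varepsilon)$ and $A:=\sup_{\varepsilon\in(0,\varepsilon_0)}\|u_\varepsilon\|_{L^\infty(\mathbb R^N)}<\infty$ (Remark~\ref{rek2.9}, Corollary~\ref{lem 2.7}). Fixing $c_*\in(0,\tfrac12)$ and then a large $\rho_1=\rho_1(N,A,c_*)$, set $\overline u_\varepsilon(x):=A\exp\{-c_*[(\rho_\varepsilon(x)-\rho_0)^+]^2\}$ for a suitable $\rho_0<\rho_1$. Using $|\nabla\rho_\varepsilon|=1$ a.e.\ and $\Delta\rho_\varepsilon\le(N-1)/\rho_\varepsilon$, a direct computation gives, with $s:=\rho_\varepsilon-\rho_0$,
$$-\Delta\overline u_\varepsilon+\overline u_\varepsilon\Bigl(1+\log\tfrac1{\overline u_\varepsilon^2}\Bigr)=\overline u_\varepsilon\bigl[\,2c_*(1-2c_*)s^2+2c_*s\,\Delta\rho_\varepsilon+1+2c_*-2\log A\,\bigr]\ \ge\ 0$$
on $\{\rho_\varepsilon>\rho_1\}$ once $\rho_0,\rho_1$ are large — the crucial point being that $2c_*(1-2c_*)>0$, so the two competing $s^2$ terms (one from $\log\tfrac1{\overline u_\varepsilon^2}=2c_*s^2-2\log A$, one from $-\Delta\overline u_\varepsilon$) combine with the right sign; a purely exponential weight would not close this inequality. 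Moreover $\overline u_\varepsilon\equiv A\ge u_\varepsilon$ on $\{\rho_\varepsilon\le\rho_0\}$; on $\{\rho_\varepsilon=\rho_1\}$ one has $u_\varepsilon\le\sigma\le Ae^{-c_*(\rho_1-\rho_0)^2}=\overline u_\varepsilon$ once $\varepsilon$ is small (so $\sigma$ small); and $u_\varepsilon-\overline u_\varepsilon\to0$ at infinity. Since $s\mapsto s(1+\log\tfrac1{s^2})$ is strictly increasing on $(0,e^{-1/2})$, the operator $w\mapsto-\Delta w+w(1+\log\tfrac1{w^2})$ obeys a comparison principle among functions with values in $(0,e^{-1/2})$; as $u_\varepsilon\le\sigma<e^{-1/2}$ is a subsolution and $\overline u_\varepsilon$ a supersolution dominating $u_\varepsilon$ on $\{\rho_\varepsilon=\rho_1\}$ and at infinity, we conclude $u_\varepsilon\le\overline u_\varepsilon$ on $\{\rho_\varepsilon>\rho_1\}$. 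Combining this with $u_\varepsilon\le A$ on $\{\rho_\varepsilon\le\rho_1\}$ and the relation $\rho_\varepsilon(x)=\operatorname{dist}(x,(\mathcal V^\delta)_\varepsilon)+\tfrac{\delta}{2\varepsilon}\ge\operatorname{dist}(x,(\mathcal V^\delta)_\varepsilon)$ yields, after enlarging the constant to absorb $\{\rho_\varepsilon\le\rho_1\}$ and the finitely many $\varepsilon\in[\varepsilon_1,\varepsilon_0)$, the bound $|u_\varepsilon(x)|\le C\exp\{-c(\operatorname{dist}(x,(\mathcal V^\delta)_\varepsilon))^2\}$ for all $x$ and all $\varepsilon\in(0,\varepsilon_0)$.

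The one step that is not routine — and the main obstacle — is making the comparison legitimate where $\rho_\varepsilon^2$ fails to be $C^2$, i.e.\ along the cut locus of $(\mathcal V^{\delta/2})_\varepsilon$, where $\Delta\rho_\varepsilon$ carries a nonpositive singular part and the supersolution computation above can degenerate. This is handled by the semiconcavity of $\rho_\varepsilon^2$ (the function $\rho_\varepsilon(x)^2-|x|^2$ is concave, so $\Delta\rho_\varepsilon^2\le2N$ distributionally with only nonpositive singular part), which validates the comparison in the viscosity/distributional sense; alternatively, one notes the cut locus lies at distance $\gtrsim\delta/\varepsilon$ from $(\mathcal V^{\delta/2})_\varepsilon$, so there both $u_\varepsilon$ and $\overline u_\varepsilon$ are of size $e^{-c/\varepsilon^2}$, runs the comparison on the complement of a shrinking neighbourhood of the cut locus, and propagates the bound across it by a crude maximum principle exploiting once more that $\log\tfrac1{u_\varepsilon^2}$ is then enormous. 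A secondary, more mechanical point is the absorption of the penalization error $Ce^{-c\varepsilon|x|^2}\mathbf 1_{\{|\varepsilon x|\ge R_0\}}$: where it is nonzero one has $|\varepsilon x|\ge R_0$, so the additional growing coefficient $\widetilde V_\varepsilon(x)\ge\varepsilon^2|x|^2$ is available and $\widehat\eta_\varepsilon(x,u_\varepsilon)=0$ unless $u_\varepsilon(x)\le5e^{-\varepsilon|x|^2}$ (Lemma~\ref{lem etaq}); one therefore adjusts $\overline u_\varepsilon$ on $\{|\varepsilon x|\ge R_0\}$ to a Gaussian adapted to this potential and patches it with the interior one, which is enough since there $\operatorname{dist}(x,(\mathcal V^\delta)_\varepsilon)\le|x|$ and only a weaker bound is needed.
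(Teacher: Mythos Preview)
Your approach shares the paper's core idea---a comparison with a Gaussian barrier, exploiting that $-\log u_\varepsilon^2$ acts as a quadratically growing coefficient---but the specific barrier you build has a genuine defect at the cut locus, and neither of your suggested fixes repairs it.

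You set $\overline u_\varepsilon=h(\rho_\varepsilon)$ with $h$ strictly decreasing and $\rho_\varepsilon=\operatorname{dist}(\cdot,(\mathcal V^{\delta/2})_\varepsilon)$. Because $h$ is decreasing, $\overline u_\varepsilon$ is, on the region where several nearest points compete, the \emph{maximum} of the smooth radial profiles $h(|x-p|)$. But the maximum of supersolutions is not a supersolution (already in one dimension: $|x|=\max(x,-x)$ has $-\Delta|x|=-2\delta_0<0$). Concretely, at the cut locus $\nabla\rho_\varepsilon$ jumps, the jump of $\partial_n\overline u_\varepsilon=h'(\rho_\varepsilon)\partial_n\rho_\varepsilon$ has the sign of $-h'>0$, so $\Delta\overline u_\varepsilon$ acquires a \emph{positive} singular measure there and $-\Delta\overline u_\varepsilon$ a negative one; the weak supersolution inequality $-\Delta\overline u_\varepsilon+\overline u_\varepsilon(1+\log\tfrac1{\overline u_\varepsilon^2})\ge0$ then fails when tested against nonnegative $\varphi$ supported near the cut locus. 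Your semiconcavity remark (that $\Delta\rho_\varepsilon^2\le2N$ with nonpositive singular part) is correct but works against you: it is precisely this nonpositive singular part of $\Delta\rho_\varepsilon$, multiplied by $h'(\rho_\varepsilon)<0$, that produces the bad sign. The alternative claim that the cut locus lies at distance $\gtrsim\delta/\varepsilon$ is also unjustified: for a general compact $\mathcal V$ (think of two nearby components, or a set whose $\delta/2$-neighbourhood has a concave boundary portion) the exterior cut locus of $(\mathcal V^{\delta/2})_\varepsilon$ can come arbitrarily close to its boundary.

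The paper sidesteps this entirely. It first absorbs the penalization error cleanly by keeping only half of the logarithm: on $\mathbb R^N\setminus(\mathcal V^{\delta/2})_\varepsilon$ one gets the homogeneous inequality $-\Delta w_\varepsilon+w_\varepsilon\le\tfrac12 w_\varepsilon\log w_\varepsilon^2$, because for $|\varepsilon x|\ge R_0$ either $\widehat\eta_\varepsilon=0$ or $w_\varepsilon\le5\phi_\varepsilon$, in which case $\overline V_\varepsilon\widehat\eta_\varepsilon-\tfrac12\log w_\varepsilon^2\ge0$. Then, instead of the distance function, it covers $\mathcal V^{\delta/2}$ by finitely many balls $B(x_j,\tfrac23\delta)$, takes the explicit smooth radial barriers $\psi_{j,\varepsilon}(x)=\exp\{-\tfrac14(|x-\varepsilon^{-1}x_j|-\tfrac23\delta\varepsilon^{-1})^2\}$, each a classical supersolution of $-\Delta\psi+\psi\ge\tfrac12\psi\log\psi^2$ outside its ball, and forms the \emph{average} $\psi_\varepsilon=\tfrac1k\sum_j\psi_{j,\varepsilon}$. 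Convexity of $s\mapsto s\log s^2$ on $(0,\infty)$ (Jensen) makes this average again a supersolution, while averaging keeps $\psi_\varepsilon$ smooth everywhere---no cut locus arises. The comparison $(w_\varepsilon-\psi_\varepsilon)^+=0$ then follows from the monotonicity of $s\log s^2$ on $(0,e^{-1})$. This also explains why your treatment of the error term is heavier than needed: once half the logarithm is sacrificed to cancel $\overline V_\varepsilon\widehat\eta_\varepsilon$, no right-hand side remains and no patching of barriers at $|\varepsilon x|\ge R_0$ is required.
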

\begin{proof}
Recalling \eqref{eq2.13}, $w_\varepsilon:=|u_\varepsilon|$ satisfies
\begin{equation}\label{eq3.9}
-\Delta w_\varepsilon+w_\varepsilon+\overline V_\varepsilon (x)\widehat\eta_\varepsilon(x,w_\varepsilon)w_\varepsilon\leq w_\varepsilon\log w_\varepsilon^2,\quad x\in \mathbb R^N.
\end{equation}
By the definition of $\widetilde V_\varepsilon$, $\overline V_\varepsilon$ and $\hat\eta_\varepsilon$, for  $\varepsilon\in (0,\varepsilon_0)$,
\begin{equation*}
\overline V_\varepsilon (x)=0\quad \text{for}\ x\in B(0,{R_0}{\varepsilon^{-1}})\setminus (\mathcal V^{\delta/2})_\varepsilon.
\end{equation*}
And for $x\in \mathbb R^N\setminus B(0,{R_0}{\varepsilon^{-1}})$ if we make $\varepsilon_0$ smaller if necessary,
\begin{equation}\label{eq3.11}
\begin{aligned}
&\widehat\eta_\varepsilon(x,w_\varepsilon)=0 \quad &\text{if}\quad
w_\varepsilon(x)\geq 5\phi_\varepsilon(x),\\
&\overline V_\varepsilon (x)\widehat\eta_\varepsilon(x,w_\varepsilon)-\frac 12\log w_\varepsilon^2\geq 0\quad &\text{if}\quad
w_\varepsilon(x)\leq 5\phi_\varepsilon(x).
\end{aligned}
\end{equation}

On the other hand, by Lemma \ref{lem 3.2}, there exists $\varepsilon_\delta >0$ such that
$\|w_\varepsilon\|_{L^\infty(\mathbb R^N\setminus (\mathcal V^{\delta/2})_\varepsilon)}\leq e^{-1}$ for $\varepsilon\in(0,\varepsilon_\delta)$.
Together with \eqref{eq3.9} and \eqref{eq3.11}, we can conclude that
$w_\varepsilon$ is a weak $H^1(\mathbb R^N\setminus (\mathcal V^{\delta/2})_\varepsilon)$ solution to
\begin{equation}\label{eq3.12}
-\Delta w_\varepsilon+w_\varepsilon\leq \frac12w_\varepsilon\log w_\varepsilon^2.
\end{equation}
By compactness, there are
$k\in \mathbb N\setminus\{0\}$ depending only on $\delta>0$ and
$x_j\in \mathcal V$, $j=1,\cdots,k$ such that
$$\mathcal V^{\frac\delta2}\subset \mathcal O:= \bigcup_{j=1}^kB(x_j,\frac{2} 3\delta)\subset
\mathcal V^{\frac {2\delta}3}.
$$
We remark that
$\psi_{j,\varepsilon}(x):=\exp\big\{-\frac14\big(|x-\varepsilon^{-1}x_j|-\frac 23\delta\varepsilon^{-1}\big)^2\big\}$ satisfies
$$-\Delta \psi_{j,\varepsilon}+\psi_{j,\varepsilon}\geq \frac 12 \psi_{j,\varepsilon}\log \psi_{j,\varepsilon}^2, \quad x\in \mathbb R^N\setminus B(\varepsilon^{-1}x_j,\frac{2} 3\delta\varepsilon^{-1}).
$$
For each $x\in \mathbb R^N\setminus \mathcal O_\varepsilon$, set $\psi_\varepsilon(x)=\frac1k\sum_{j=1}^k\psi_{j,\varepsilon}(x)$.
By convexity of $s\log s^2, s\in (0,\infty)$, we have

\begin{equation}\label{eq3.13}
-\Delta \psi_\varepsilon+\psi_\varepsilon\geq \frac 12 \psi_\varepsilon\log \psi_\varepsilon^2, \quad x\in \mathbb R^N\setminus \mathcal O_\varepsilon.
\end{equation}
Since $\psi_\varepsilon(x)\geq \frac1k$ for $x\in \partial \mathcal O_\varepsilon$,  shrinking $\varepsilon_\delta>0$ if necessary and using Lemma \ref{lem 3.2},
we may assume $(w_\varepsilon-\psi_\varepsilon)^+\in H_0^1(\mathbb R^N\setminus \mathcal O_\varepsilon)$ for
$\varepsilon\in (0, \varepsilon_\delta).$ Subtracting  \eqref{eq3.13} from \eqref{eq3.12} and  testing with  $(w_\varepsilon-\psi_\varepsilon)^+$, we obtain
\begin{equation*}
\|(w_\varepsilon-\psi_\varepsilon)^+\|_{H^1(\mathbb R^N\setminus \mathcal O_\varepsilon)}^2\leq \frac12\int_{\mathbb R^N\setminus \mathcal O_\varepsilon}(w_\varepsilon-\psi_\varepsilon)^+(w_\varepsilon\log w_\varepsilon^2-\psi_\varepsilon\log \psi_\varepsilon^2)\leq 0,
\end{equation*}
where the last inequality is a result
of the decreasing monotonicity of $s\log s^2$ in $(0,e^{-1})$.
Therefore, for $\varepsilon\in (0,\varepsilon_\delta)$, $w_\varepsilon\leq \psi_\varepsilon$ in $\mathbb R^N\setminus \mathcal O_\varepsilon$.
Noting that for $x\in \mathbb R^N\setminus (\mathcal V^\delta)_\varepsilon$, $\text{dist}(x,(\mathcal V^\delta)_\varepsilon)\leq \text{dist}(x,\mathcal O_\varepsilon)$,
we have
\begin{equation}\label{3.14}
w_\varepsilon(x)\leq \psi_\varepsilon(x)\leq \exp\big\{-\frac14\big(\text{dist}(x,(\mathcal V^\delta)_\varepsilon)\big)^2\big\},\ \ \varepsilon\in (0,\varepsilon_\delta).
\end{equation}
To recover the estimate for any $\varepsilon\in (0,\varepsilon_0)$,
we note that   \eqref{eq3.12} holds for
$\varepsilon\in(0,\varepsilon_0)$ and
$x\in \mathbb R^N\setminus B(0,{R_0}{\varepsilon^{-1}})$ with $\varepsilon_0$ small but independent of $\delta$. Without loss of generality, we may also assume
$\|w_\varepsilon\|_{L^\infty(\mathbb R^N\setminus B(0,R_0\varepsilon^{-1}))}\leq e^{-1}$ for $\varepsilon\in(0,\varepsilon_0)$.
Then through a similar comparison argument, for $x\in \mathbb R^N\setminus B(0, 2R_0\varepsilon^{-1})$,
$$w_\varepsilon(x)\leq  e^{-\frac14(|x|-R_0\varepsilon^{-1})^2}\leq \exp\big\{-\frac1{16}\big(\text{dist}(x,(\mathcal V^\delta)_\varepsilon)\big)^2\big\}.
$$
For $x\in B(0, 2R_0\varepsilon^{-1})$,
we set $C_\delta:=A\exp\{\frac 1{4}R_0^2\varepsilon_\delta^{-2}\}$, where
$A:=1+\sup_{\varepsilon\in(0,\varepsilon_0)}\|u_\varepsilon\|_{L^\infty(\mathbb R^N)}$ is a positive constant by Lemma
\ref{lem 2.7}.
Then it holds
$$w_\varepsilon(x)\leq C_\delta \exp\big\{-\frac1{16}\big(\text{dist}(x,(\mathcal V^\delta)_\varepsilon)\big)^2\big\},
$$
for $\varepsilon\in[\varepsilon_\delta, \varepsilon_0)$ and $x\in \mathbb R^N$.
Recalling \eqref{3.14}, we have completed the proof.
\end{proof}
Now we are ready to show Theorem \ref{th1.1}.
\begin{proof}[Proof of Theorem \ref{th1.1}]
For $\varepsilon\in(0,\varepsilon_0)$,
by Proposition \ref{pro 3.3}, $|u_\varepsilon(x)|\leq \phi_\varepsilon(x)$ in $\mathbb R^N\setminus\Omega_\varepsilon$, which implies
$\eta_\varepsilon(x,|u_\varepsilon|)=0$, $\widehat \eta_\varepsilon(x,|u_\varepsilon|)=1$ and
 $f_\varepsilon(x,u_\varepsilon)=u_\varepsilon\log u_\varepsilon^2$. Therefore,
 $T(x,u_\varepsilon)\equiv V_\varepsilon(x)$ in \eqref{eq2.8}.
 Then
 $u_\varepsilon$ is a solution to
 \eqref{eq2}.
At this point, $v_\varepsilon(x):=u_\varepsilon(\frac{x}{\varepsilon})$, $\varepsilon\in(0,\varepsilon_0)$ is a nontrivial solution for the original equation \eqref{1.1}.
By Lemma \ref{lem 3.1}, for some $y_\varepsilon\in \mathbb R^N$ with $\text{dist}(\varepsilon y_\varepsilon, \mathcal V)\to 0$, $u_\varepsilon(\cdot+y_\varepsilon)$ converges to $ U_{V_0}$
weakly in $H^1(\mathbb R^N)$ and strongly in $L^p(\mathbb R^N)$ for $p\in (2,2^*)$.
To show it  is in fact a strong convergence in $H^1(\mathbb R^N)$, we note that  by the convergence in $L^p(\mathbb R^N)$,
$$\int_{\mathbb R^N}( u_\varepsilon^2\log  u_\varepsilon^2)^+\to\int_{\mathbb R^N}(U_{V_0}^2\log U_{V_0}^2)^+
,\ \ \varepsilon\to 0.$$
This, together with $\Gamma_\varepsilon'(u_\varepsilon)u_\varepsilon=0$ and $I_{V_0}'(U_{V_0})U_{V_0}=0$,
leads us to the fact
$$\lim_{\varepsilon\to 0}\int_{\mathbb R^N}|\nabla  u_\varepsilon|^2
+ \widetilde V_\varepsilon(x) u_\varepsilon^2
+( u_\varepsilon^2\log  u_\varepsilon^2)^-
=\int_{\mathbb R^N}|\nabla U_{V_0}|^2+V_0 U_{V_0}^2+(U_{V_0}^2\log U_{V_0}^2)^-.$$
Therefore, $\lim_{\varepsilon\to 0}|\nabla  u_\varepsilon|_2^2= |\nabla U_{V_0}|_2^2$ and $\lim_{\varepsilon\to 0}|u_\varepsilon|^2_2=|U_{V_0}|^2_2$ by $\widetilde V \geq V_0$ in $\Omega$ and Proposition \ref{pro 3.3}. Thus we have proved the convergence in $H^1(\mathbb R^N)$.
  Moreover, we may assume without loss of generality that $u_\varepsilon$ attains its maximum value at $y_\varepsilon$   which is also the  unique local maximum point by the same arguments in \cite[Proposition 2.1]{DelPino-Felmer1996}.
  This completes the proof of (i).
  The conclusion (ii) follows from (i) and an argument similar   to Proposition
  \ref{pro 3.3}.
\end{proof}

\section{Proof of Theorem \ref{th1.2}}
In this section, we show Theorem \ref{th1.2}
and assume (V0), (V2) and (V3).
By mini-max theory, these conditions guarantee
the existence of a critical point of $V$ at level $\mu_0$ inside $\Omega$.
Similar to Section \ref{sec3},
we assume $0\in\set{x\in\Omega|V(x)=\mu_0}\subset \Omega\subset B(0,R_0/2)$ for some $R_0>0$ and $V\geq 1$ in $B(0,R_0)$.

Let $\mu_1\in(\mu_0-\log2,\mu_0)$ be a fixed number which is so close to $\mu_0$ that
\begin{equation}\label{eq41}
\partial_TV(x)\neq 0, \quad \text{for}\
x\in \partial\Omega \cap\set{x|\mu_1<V(x)\leq \mu_0}.
\end{equation}
We resize
$$\widetilde\Omega=\Omega\cap\set{x\in\Omega| V(x)>\mu_1}.$$
And choose
$$\mu_{1,\varepsilon}:=\inf\Set{\zeta> \mu_1|\text{dist}(\set{x\in\Omega| V(x)=\zeta},\partial\widetilde\Omega)\geq \varepsilon^{\frac12}}.
$$
Then $\mu_{1,\varepsilon}\to \mu_1$ as $\varepsilon\to0$. For each small $\varepsilon>0$,
we choose $\gamma_\varepsilon\in \mathcal T$ such that
$$\max_{y\in \mathcal  B}V(\gamma_\varepsilon(y))<\mu_0+\varepsilon.$$
Denote
\begin{align*}
B=\gamma_\varepsilon(\mathcal B)\cap \set{x\in\Omega | V(x)\geq \mu_{1,\varepsilon}},\quad
S=\gamma_\varepsilon(\mathcal B)\cap \set{x\in\Omega | V(x)= \mu_{1,\varepsilon}}
\end{align*}
and note that $\mathcal B_0\neq \emptyset$ implies $S\neq \emptyset$ since $\gamma_\varepsilon(\mathcal B)$ is connected and $\gamma_\varepsilon(y)=y$ for $y\in\mathcal B_0$.
Remark also that $\text{dist}(B, \partial \widetilde\Omega)=\text{dist}(S, \partial \widetilde\Omega)=\varepsilon^{\frac12}$ by the choice of $\mu_{1,\varepsilon}$.
Set $$\widetilde{\mathcal T}=\set{\gamma\in C(B,\widetilde\Omega) | \gamma(x)=x\ \text{for each}\ x\in S}.$$
Then for each $\gamma\in \widetilde{\mathcal T}$, we can find $\tau\in \mathcal T$ as
$$\tau(y)=\begin{cases}
\gamma_\varepsilon(y) &\text{if}\ \gamma_\varepsilon(y)\notin B,\cr
\gamma(\gamma_\varepsilon(y)) &\text{if}\ \gamma_\varepsilon(y)\in B,
\end{cases}
$$
satisfying $\sup_{x\in B}V(\gamma(x))=\sup_{y\in \mathcal B}V(\tau(y))\in[\mu_0,+\infty)$.
Therefore,
$$\mu_{0,\varepsilon}:=\inf_{\gamma\in \widetilde{\mathcal T}}\max_{x\in B}V(\gamma(x))\in [\mu_0,\mu_0+\varepsilon).
$$
Without loss of generality, we may assume that $B$ is  also connected, since there exists a connected component of $B$ such that the restriction of each $\gamma\in\widetilde{\mathcal T}$ to this component across the level set $\set{x|V(x)\geq \mu_{0,\varepsilon}}$.
\begin{remark}\label{rek4.1}
Since the choice of $\widetilde \Omega$
is independent of $\varepsilon$,
the arguments in Section \ref{sec2} are applicable to it.
By \eqref{eq41}, there holds
\begin{equation*}
\partial_TV(x)\neq 0, \quad \text{for}\
x\in \partial\widetilde\Omega \cap\set{x|\mu_1<V(x)\leq \mu_0}.
\end{equation*}
Thus in what follows in this section, we can denote  $\widetilde \Omega$ as $\Omega$ for the sake of brevity.
We also assume $\varepsilon_0>0$ determined in Section \ref{sec2} is such that $\varepsilon_0<\mu_0-\mu_1$.
\end{remark}

By  Remark \ref{rek2.9} (ii), it is  nature to  consider the   Nehari manifold:
$$\mathcal M_\varepsilon:=\set{ u\in H_\varepsilon | \Gamma_\varepsilon'(u)u=0\ \text{and}\ \|u\|_\varepsilon\geq \varepsilon^2}.$$
\begin{lemma}\label{lem4.1}
$\mathcal M_\varepsilon$ is a $C^1$ manifold with co-dimensional $1$ and
 $\inf_{u\in\mathcal M_\varepsilon}\Gamma_\varepsilon(u)$ is attained by a critical point of $\Gamma_{\varepsilon}$. Moreover,
 \begin{equation}\label{42}
 \liminf_{\varepsilon\to0}\inf_{u\in\mathcal M_\varepsilon}\Gamma_\varepsilon(u)>0.
 \end{equation}
\end{lemma}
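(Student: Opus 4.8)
The plan is to break the statement into three assertions and handle them in order. First, that $\mathcal M_\varepsilon$ is a $C^1$ co-dimension-one manifold: this is a standard Nehari-type argument. Define $\Phi_\varepsilon(u)=\Gamma_\varepsilon'(u)u = \|u\|_\varepsilon^2 + \Psi_\varepsilon'(u)u - \int_{\mathbb R^N} f_\varepsilon(x,u)u$. One checks $\Phi_\varepsilon$ is $C^1$ on $\{u : \|u\|_\varepsilon \ge \varepsilon^2\}$ (using Lemma \ref{lem 2.1}, Corollary \ref{cor2.3}, and the smoothness of $f_\varepsilon$), and that $\Phi_\varepsilon'(u)u \ne 0$ for $u\in\mathcal M_\varepsilon$: indeed on $\mathcal M_\varepsilon$ one has, using Lemma \ref{lem etaq}(v) and $\Phi_\varepsilon(u)=0$,
\[
\Phi_\varepsilon'(u)u = 2\|u\|_\varepsilon^2 + \Psi_\varepsilon''(u)[u,u] + \Psi_\varepsilon'(u)u - \int_{\mathbb R^N}\big(f_\varepsilon'(x,u)u^2 + f_\varepsilon(x,u)u\big),
\]
and after substituting $\Phi_\varepsilon(u)=0$ and using that $s f_\varepsilon'(x,s) - f_\varepsilon(x,s) \ge $ (a controlled quantity: it equals $2(1-\chi_\varepsilon)s + \chi_\varepsilon(g'(s)s - g(s))$, which is $\ge 0$ plus an $L^2$ term on $\Omega_\varepsilon$), together with the exponentially small bounds from Corollary \ref{cor2.3} and the lower bound $\|u\|_\varepsilon\ge\varepsilon^2$ (which keeps us away from the degenerate region $u\equiv 0$), one gets $\Phi_\varepsilon'(u)u < 0$. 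This shows $0$ is a regular value of $\Phi_\varepsilon$ restricted to the relevant region, so $\mathcal M_\varepsilon$ is a $C^1$ manifold of co-dimension one, and every constrained critical point of $\Gamma_\varepsilon|_{\mathcal M_\varepsilon}$ is a free critical point of $\Gamma_\varepsilon$ by the usual Lagrange-multiplier computation.

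Second, that $\inf_{\mathcal M_\varepsilon}\Gamma_\varepsilon$ is attained. I would take a minimizing sequence $\{u_n\}\subset\mathcal M_\varepsilon$; since $\Gamma_\varepsilon(u_n)$ is bounded and $\Gamma_\varepsilon'(u_n)u_n = 0$, Lemma \ref{lem 2.3} gives $\|u_n\|_\varepsilon \le C$ and the uniform logarithmic/$G$ integral bounds. By Lemma \ref{lem 2.1}, up to a subsequence $u_n\rightharpoonup u$ weakly in $H_\varepsilon$, strongly in $L^q$ for $\frac{2N}{N+2}<q<2^*$, and the nonlinear terms converge. One must rule out $u$ landing outside $\mathcal M_\varepsilon$: the constraint $\|u_n\|_\varepsilon\ge\varepsilon^2$ together with Remark \ref{rek2.9}(ii) (the dichotomy $\liminf|u_n|_p>0$ or $|u_n|_p\le Ce^{-c/\varepsilon}$, the latter being excluded for fixed $\varepsilon$ once $\varepsilon<\varepsilon_0$ small and $\|u_n\|_\varepsilon\ge\varepsilon^2$) forces $\liminf|u_n|_p>0$, hence $u\ne 0$ and $\|u\|_\varepsilon\ge\varepsilon^2$. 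Passing to the limit in $\Gamma_\varepsilon'(u_n)u_n=0$ using the $L^q$-convergence and the $\Psi_\varepsilon$-estimates of Corollary \ref{cor2.3} shows $\Phi_\varepsilon(u)=0$, so $u\in\mathcal M_\varepsilon$; then weak lower semicontinuity of $\|\cdot\|_\varepsilon^2$ and the identity $\Gamma_\varepsilon(u)=\Gamma_\varepsilon(u)-\frac12\Gamma_\varepsilon'(u)u = \frac12\int((1-\chi_\varepsilon)u^2+\chi_\varepsilon(gu-2G)) - \text{(exp.\ small)}$ together with Fatou give $\Gamma_\varepsilon(u)\le\liminf\Gamma_\varepsilon(u_n)$, so the infimum is attained; and it is a critical point by the manifold structure just established.

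Third, the uniform lower bound \eqref{42}. This is the part I expect to be the main obstacle, and the natural route is by contradiction combined with Lemma \ref{lem2.10}. Suppose $\varepsilon_n\to 0$ and minimizers $v_{\varepsilon_n}\in\mathcal M_{\varepsilon_n}$ with $\Gamma_{\varepsilon_n}(v_{\varepsilon_n})\to \ell$ for some $\ell$ with $\ell \le 0$ or more generally $\ell<m(\inf_\Omega V)$ — actually one wants to show $\liminf \ge$ a positive number, so assume $\liminf\Gamma_{\varepsilon_n}(v_{\varepsilon_n}) = \ell$ and derive that $\ell$ must be bounded below by something positive. Since $v_{\varepsilon_n}\in\mathcal M_{\varepsilon_n}$ and (by minimality and comparison with a fixed test function supported in $\Omega_\varepsilon$, as in Lemma \ref{lem 3.1}) $\Gamma_{\varepsilon_n}(v_{\varepsilon_n})$ is bounded above, Lemma \ref{lem 2.3} applies; and the Nehari identity plus Lemma \ref{lem etaq}(v) gives $2\Gamma_{\varepsilon_n}(v_{\varepsilon_n}) = \int((1-\chi_{\varepsilon_n})v_{\varepsilon_n}^2 + \chi_{\varepsilon_n}(g v_{\varepsilon_n} - 2G(v_{\varepsilon_n}))) + 2\Psi_{\varepsilon_n}(v_{\varepsilon_n}) - \Psi_{\varepsilon_n}'(v_{\varepsilon_n})v_{\varepsilon_n} \ge \int v_{\varepsilon_n}^2 - Ce^{-c/\varepsilon_n}$, which is $\ge \varepsilon_n^4 - Ce^{-c/\varepsilon_n}$; this only gives a nonnegative limit a priori. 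To upgrade to a strictly positive bound, I would run the concentration-compactness argument of Lemma \ref{lem2.10}: the constraint and Remark \ref{rek2.9}(ii) exclude the vanishing-amplitude alternative, so $\liminf|v_{\varepsilon_n}|_p>0$, P.L. Lions' lemma yields translates $v_{\varepsilon_n}(\cdot+y_n)\rightharpoonup v\ne 0$, the translates cannot escape $\Omega$ (else the limiting inequality $\int|\nabla v|^2 + v^2 \le \int g(v)v\le 0$ forces $v=0$), $\varepsilon_n y_n\to x_0\in\overline\Omega$, $v$ solves the (possibly half-space–penalized) limit equation, and Fatou's lemma gives $\liminf\Gamma_{\varepsilon_n}(v_{\varepsilon_n}) \ge \widetilde I(v)\ge m(V(x_0))\ge m(\inf_\Omega V)>0$, using the strict positivity of $m(\cdot)$ (from \eqref{3.1}, $m(a)=\frac{e^a}{2}|U|_2^2>0$ for every real $a$, and $\inf_\Omega V > -\infty$ by (V0)/Remark \ref{rek1.1}). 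This establishes \eqref{42} and completes the proof. The delicate point throughout is making sure the $\varepsilon^2$-lower-bound in the definition of $\mathcal M_\varepsilon$ is genuinely used to stay in the region where $\Gamma_\varepsilon$ is $C^1$ and where the Nehari transversality and the non-vanishing dichotomy hold; everything else is an assembly of the lemmas already proved in Section \ref{sec2}.
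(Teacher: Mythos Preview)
Your proposal captures the right ingredients but organizes them inefficiently, and Step~2 has a genuine gap. You write ``passing to the limit in $\Gamma_\varepsilon'(u_n)u_n=0$ \ldots\ shows $\Phi_\varepsilon(u)=0$'': this would require $\|u_n\|_\varepsilon\to\|u\|_\varepsilon$, but weak convergence in $H_\varepsilon$ only gives $\|u\|_\varepsilon^2\le\liminf\|u_n\|_\varepsilon^2$, hence only $\Phi_\varepsilon(u)\le0$. You would then need either a fibering argument to project $u$ back onto $\mathcal M_\varepsilon$ (delicate because of the penalized nonlinearity and the $\Psi_\varepsilon$ term), or---as the paper does---invoke Ekeland's variational principle to produce a true Palais--Smale sequence and then cite Corollary~\ref{cor 2.4}.

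More importantly, the paper's proof is shorter because it reverses your order of priorities. Before touching the manifold structure or \eqref{42}, the paper proves a single quantitative estimate: for every $u\in\mathcal M_\varepsilon$ (and small $\varepsilon$),
\[
\int_{\Omega_\varepsilon}u^2\ \ge\ \sigma_0>0,
\]
uniformly in $\varepsilon$ and $u$. This comes from Remark~\ref{rek2.9}(ii) (giving $\|u\|_\varepsilon\ge\sigma$), the Nehari identity $\|u\|_\varepsilon^2+\Psi_\varepsilon'(u)u=\int f_\varepsilon(x,u)u\le\int_{\Omega_\varepsilon}(u^2\log u^2)^+$ (using $g(s)s\le0$), and the Gagliardo--Nirenberg interpolation $\int_{\Omega_\varepsilon}|u|^{2+2/(N+2)}\le C\|u\|_\varepsilon^{1+2/(N+2)}\bigl(\int_{\Omega_\varepsilon}u^2\bigr)^{1/2}$. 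This one bound then does double duty: it yields \eqref{42} in one line via
\[
2\Gamma_\varepsilon(u)=2\Gamma_\varepsilon(u)-\Gamma'_\varepsilon(u)u\ \ge\ \int_{\Omega_\varepsilon}u^2\ \ge\ \sigma_0,
\]
and it gives the transversality $\mathcal F_\varepsilon'(u)u\le-2\sigma_0+Ce^{-c/\varepsilon}<0$ needed for the manifold structure---exactly the inequality you left vague in Step~1. Your concentration-compactness argument for \eqref{42} via the machinery of Lemma~\ref{lem2.10} is correct but is much heavier than the two-line direct computation the paper uses.
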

\begin{proof}
By  Remark \ref{rek2.9} (ii), we may assume without loss of generality that for some constant $\sigma>0$,
$$ \inf_{u\in \mathcal M_\varepsilon}\|u\|_\varepsilon\geq \sigma.
$$
For $u\in\mathcal M_\varepsilon$, since $\Gamma_\varepsilon'(u)u=0$, by Lemma \ref{lem etaq} (iii) and Corollary \ref{cor2.3}, we have for some $C>0$ independent of $\varepsilon$ and
$u$,
$$\begin{aligned}
\frac{\|u\|_\varepsilon^2}2\leq \int_{\mathbb R^N}(1-\chi_\varepsilon)(u^2\log u^2)^+&\leq C\int_{\mathbb R^N}(1-\chi_\varepsilon)|u|^{2+\frac{2}{N+2}}\\
&\leq C\|u\|_\varepsilon^{1+\frac{2}{N+2}}\Big(\int_{\mathbb R^N}(1-\chi_\varepsilon)|u|^2\Big)^\frac12.
\end{aligned}
$$
Therefore for some $\sigma_0>0$ independent of $\varepsilon$  and $u$,
$$\int_{\mathbb R^N}(1-\chi_\varepsilon)|u|^2\geq \sigma_0.
$$
Hence, by $g(s)s-2G(s)\geq 0$,
$$\begin{aligned}
2\Gamma_\varepsilon(u)=
2\Gamma_\varepsilon(u)-\Gamma_\varepsilon'(u)u
=&\int_{\mathbb R^N}(1-\chi_\varepsilon)u^2+\int_{\mathbb R^N}\chi_\varepsilon\big(g(u)u-2G(u)\big)\geq \sigma_0.
\end{aligned}
$$
Then there holds \eqref{42}.
To show $\mathcal M_\varepsilon$ is a $C^1$ manifold with co-dimensional $1$, for $u\in H_\varepsilon$,
set
\begin{equation}\label{F}
\mathcal F_\varepsilon(u)=\Gamma_\varepsilon'(u)u=\|u\|^2_\varepsilon+\Psi'(u)u-\int_{\mathbb R^N}f_\varepsilon(x,u)u.
\end{equation}
Noting that
$f_\varepsilon(x,s)s=2F_\varepsilon(x,s)+(1-\chi_\varepsilon(x))s^2+\chi_\varepsilon(x)(g(s)s-2G(s))$,
we can get  $\mathcal F_\varepsilon\in C^1(H_\varepsilon)$ by Lemma \ref{lem etaq} (v).
Then it is direct to check that for all small $\varepsilon$ and  $u\in\mathcal M_\varepsilon$, by Corollary \ref{cor2.3} and  $(g(s)s-2G(s))'s\geq g(s)s-2G(s)\geq0$,
\begin{equation}\label{eq2.17}
\begin{aligned}
\mathcal F_\varepsilon'(u)u&\leq -2\int_{\mathbb R^N} (1-\chi_\varepsilon)u^2-\int_{\mathbb R^N}\chi_\varepsilon \big(g(s)s-2G(s)\big)+Ce^{-c/\varepsilon}\\
&\leq  -2\sigma_0+Ce^{-c/\varepsilon}<-\sigma_0.
\end{aligned}
\end{equation}
Therefore, $\mathcal M_\varepsilon$ is a $C^1$ manifold with co-dimensional $1$.

To complete the proof, it is easy to check,  by Ekeland variational principle and Corollary \ref{cor 2.4} that
$\inf_{u\in\mathcal M_\varepsilon}\Gamma_\varepsilon(u)$ is achieved by a critical point of $\Gamma_\varepsilon$.
\end{proof}

We remark that the infimum defined  in the above proposition
 does not necessarily determine a solution to the original problem under the assumptions in this section.
We should define  another minimax  value on the Nehari manifold to
solve the equation. To this end, for $\varepsilon\in (0,\varepsilon_0)$,
let $\zeta_\varepsilon(x)=\zeta(\varepsilon^{1/3} x)$, where $\zeta$ is a  radial smooth cut-off such that
 $\zeta (x)=1$ if $|x|\leq 1/2$,
 $\zeta (x)=0$  if $|x|\geq 1$ and $|\nabla\zeta|\leq 4$.
For  any $y\in  B$, define
\begin{equation}\label{3.10}
 \psi_\varepsilon(y)(\cdot):=\zeta_\varepsilon(\cdot-\varepsilon^{-1}y) U_{V(y)}(\cdot-\varepsilon^{-1} y).
\end{equation}
By Remark \ref{rek4.1} and the construction of $B$ and $S$, when $\varepsilon$ is small and $y\in B$, we have
\begin{equation} \label{3.11}
\Psi_\varepsilon(s\psi_\varepsilon(y))=0 \ \ \ \text{and}\ \
 f_\varepsilon(x,s\psi_\varepsilon(y))= s \psi_\varepsilon(y) \log (s\psi_\varepsilon(y))^2,\ \ \ \text{for\ all } s\in\mathbb R.
\end{equation}
Note that by the monotonicity of $\log s$, $s>0$,
there is a unique $t_\varepsilon(y)>0$ such that $t_\varepsilon(y)\psi_\varepsilon(y)\in\mathcal M_\varepsilon$.
By uniqueness,  $t_\varepsilon(y)$ is continuous for $y\in B$.
We  define the min-max value
 \begin{equation*}
 m_\varepsilon=\inf_{\phi\in\Phi_\varepsilon}\max_{y\in B}\Gamma_\varepsilon(\psi(y)),
 \end{equation*}
with
 \begin{equation*}
 \Phi_\varepsilon=\set {\phi\in C(B, \mathcal M_\varepsilon)| \phi(y)=t_\varepsilon(y)\psi_\varepsilon(y)\ \text{for}\ y\in S}.
 \end{equation*}
Then by the continuity of $\Gamma_\varepsilon(t\psi_\varepsilon(y))$ and compactness of $B\subset \Omega$, we can show
\begin{lemma}\label{lemma4.1} There is $T_0>0$ independent of $\varepsilon$ and $y$ such that $t_\varepsilon(y)\in(0, T_0)$. Moreover
\begin{equation*}
\begin{aligned}
\limsup_{\varepsilon\to 0}\sup_{y\in B}\Gamma_\varepsilon(t_\varepsilon(y)\psi_\varepsilon(y))\leq  m(\mu_0),\quad
\limsup_{\varepsilon\to 0}\sup_{y\in S}\Gamma_\varepsilon(t_\varepsilon(y)\psi_\varepsilon(y))\leq m(\mu_1).
\end{aligned}
\end{equation*}
\end{lemma}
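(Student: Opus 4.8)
The plan is to analyze the function $t \mapsto \Gamma_\varepsilon(t\psi_\varepsilon(y))$ directly, exploiting the fact that by \eqref{3.11} the penalization terms $\Psi_\varepsilon$ and the del Pino--Felmer modification vanish along this family for small $\varepsilon$, so that
\begin{equation*}
\Gamma_\varepsilon(t\psi_\varepsilon(y)) = \frac{t^2}{2}\int_{\mathbb R^N}\big(|\nabla \psi_\varepsilon(y)|^2 + (\widetilde V_\varepsilon(x)+1)\psi_\varepsilon(y)^2\big) - \frac{t^2\log t^2}{2}\int_{\mathbb R^N}\psi_\varepsilon(y)^2 - \frac{t^2}{2}\int_{\mathbb R^N}\psi_\varepsilon(y)^2\log\psi_\varepsilon(y)^2,
\end{equation*}
exactly as in \eqref{2.5}. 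First I would establish the uniform bound $t_\varepsilon(y) < T_0$: since $t_\varepsilon(y)\psi_\varepsilon(y) \in \mathcal M_\varepsilon$ means $\mathcal F_\varepsilon(t_\varepsilon(y)\psi_\varepsilon(y)) = 0$, differentiating the explicit expression above in $t$ and using that $\|\psi_\varepsilon(y)\|_\varepsilon^2$ and $\int \psi_\varepsilon(y)^2$ are uniformly bounded above and below (because $U_{V(y)}$ has a fixed profile with $V(y)$ ranging over the compact set $\overline\Omega$, and the cut-off $\zeta_\varepsilon$ converges to $1$ locally), one gets $\log t_\varepsilon(y)^2 \leq C$ uniformly in $y \in B$ and small $\varepsilon$, hence $t_\varepsilon(y) \le T_0$. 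The lower positivity is immediate from $t_\varepsilon(y)\psi_\varepsilon(y)\in\mathcal M_\varepsilon$ and $\|\cdot\|_\varepsilon \geq \varepsilon^2$ on $\mathcal M_\varepsilon$.

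For the energy estimates, the key point is that as $\varepsilon \to 0$ the cut-off $\zeta_\varepsilon(\cdot - \varepsilon^{-1}y)$ becomes transparent on the scale of $U_{V(y)}$, so
\begin{equation*}
\psi_\varepsilon(y)(\cdot + \varepsilon^{-1}y) \to U_{V(y)} \quad \text{strongly in } H^1(\mathbb R^N),
\end{equation*}
and moreover $\int \psi_\varepsilon(y)^2\log\psi_\varepsilon(y)^2 \to \int U_{V(y)}^2\log U_{V(y)}^2$, uniformly for $y$ in the compact set $B$ (the Gaussian tail of $U_{V(y)}$ kills the region where $\zeta_\varepsilon$ differs from $1$; the only subtlety is the error term $\int(\widetilde V_\varepsilon(x) - V(y))\psi_\varepsilon(y)^2$, which is $o_\varepsilon(1)$ by continuity of $V$ together with the fact that $\widetilde V = V$ near $\Omega$ and the support of $\psi_\varepsilon(y)$ shrinks to $\{y\}$ after rescaling). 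Combining this with the explicit formula, $\sup_{t\geq 0}\Gamma_\varepsilon(t\psi_\varepsilon(y)) \to \sup_{t\geq 0} I_{V(y)}(tU_{V(y)}) = m(V(y))$ uniformly for $y \in B$. In particular $\Gamma_\varepsilon(t_\varepsilon(y)\psi_\varepsilon(y)) \le \sup_{t\geq 0}\Gamma_\varepsilon(t\psi_\varepsilon(y)) \le m(V(y)) + o_\varepsilon(1)$. For $y \in B$ one has $V(y) \le \mu_0 + \varepsilon$ by construction (recall $B \subset \gamma_\varepsilon(\mathcal B)$ and $\max_{\mathcal B}V(\gamma_\varepsilon(\cdot)) < \mu_0 + \varepsilon$), so by monotonicity of $m(\cdot)$ we get $\limsup_{\varepsilon\to 0}\sup_{y\in B}\Gamma_\varepsilon(t_\varepsilon(y)\psi_\varepsilon(y)) \le \limsup_{\varepsilon\to 0} m(\mu_0 + \varepsilon) = m(\mu_0)$ by continuity of $m$. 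For $y \in S$ one has $V(y) = \mu_{1,\varepsilon} \to \mu_1$, and the same argument with $m(\mu_{1,\varepsilon}) \to m(\mu_1)$ gives the second inequality.

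The main obstacle I anticipate is establishing the uniformity in $y$ of the convergence $\Gamma_\varepsilon(t\psi_\varepsilon(y)) \to I_{V(y)}(tU_{V(y)})$, in particular controlling the logarithmic term $\int \psi_\varepsilon(y)^2\log\psi_\varepsilon(y)^2$ uniformly: one must check that the truncation introduces no loss in the region $|\cdot - \varepsilon^{-1}y| \in [\tfrac12\varepsilon^{-1/3}, \varepsilon^{-1/3}]$, where the Gaussian $U_{V(y)}^2$ is of size $\exp\{-c\varepsilon^{-2/3}\}$ — this is comfortably small, but the $\log$ factor grows like $\varepsilon^{-2/3}$, so one needs the elementary estimate that $s^2|\log s^2|$ is still integrable and negligible there, uniformly in the bounded parameter $V(y)$. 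Everything else is a routine compactness argument over the compact sets $B$ and $S$, using continuity of $t_\varepsilon(\cdot)$, $V$, and $m$.
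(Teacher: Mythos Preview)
Your proposal is correct and follows essentially the same route as the paper: both use \eqref{3.11} to reduce $\Gamma_\varepsilon(t\psi_\varepsilon(y))$ to the explicit formula of type \eqref{2.5}, establish uniform-in-$y$ convergence of the quadratic and logarithmic integrals to those of $U_{V(y)}$, and conclude $\sup_{t\geq 0}\Gamma_\varepsilon(t\psi_\varepsilon(y))\leq m(V(y))+o_\varepsilon(1)$, after which the bounds $V(y)<\mu_0+\varepsilon$ on $B$ and $V(y)=\mu_{1,\varepsilon}\to\mu_1$ on $S$ finish the job. The only cosmetic difference is how $T_0$ is obtained: the paper shows $\Gamma_\varepsilon(t\psi_\varepsilon(y))<-2$ for $t\geq T_0$ via the explicit formula $\Gamma_\varepsilon(t\psi_\varepsilon(y))=\tfrac12 t^2(1-\log t^2)e^{V(y)}|U|_2^2+t^2(1+|\log t^2|)o_\varepsilon(1)$, while you solve the Nehari condition for $\log t_\varepsilon(y)^2$ directly; both are immediate once the uniform bounds on $\|\psi_\varepsilon(y)\|_\varepsilon^2$, $|\psi_\varepsilon(y)|_2^2$, and $\int\psi_\varepsilon(y)^2\log\psi_\varepsilon(y)^2$ are in hand.
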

 \begin{proof}
As $\varepsilon\to 0$, there uniformly hold for $y\in  B$ that
 \begin{align*}
 \|\psi_\varepsilon(y)\|^2_\varepsilon&\to \int_{\mathbb R^N}|\nabla U_{V(y)}|^2+V(y)|U_{V(y)}|^2\quad\text{and}\\
 \int_{\mathbb R^N} \psi_\varepsilon(y)^2\log \psi_\varepsilon(y)^2&\to \int_{\mathbb R^N}U_{V(y)}^2\log U_{V(y)}^2=\int_{\mathbb R^N}|\nabla U_{V(y)}|^2+V(y)|U_{V(y)}|^2.
 \end{align*}
 By \eqref{3.11}, it is easy to check that for all $y\in B$, there uniformly holds
 $$\begin{aligned}
 \Gamma_\varepsilon(t\psi_\varepsilon(y))&=
 \frac{1}2t^2(1-\log t^2)\int_{\mathbb R^N}|U_{V(y)}|^2+t^2(1+|\log t^2|)o_\varepsilon(1)\\
&=\frac12 t^2(1-\log t^2)e^{V(y)}|U|_2^2+t^2(1+|\log t^2|)o_\varepsilon(1).
 \end{aligned}
 $$
Then for small $\varepsilon$ and $y\in B$, we have $\Gamma_\varepsilon(t\psi_\varepsilon(y))<-2$ if $t\geq T_0$ with $T_0$ a fixed large constant. So for $y\in  B$,
 we can  check that $\sup_{t\in[0,+\infty)}\Gamma_\varepsilon(t\psi_\varepsilon(y))
 =\sup_{t\in[0,T_0]}\Gamma_\varepsilon(t\psi_\varepsilon(y))\leq \frac12e^{V(y)}|U|_2^2+o_\varepsilon(1)=m(V(y))+o_\varepsilon(1)$. And the conclusion follows from the choice of $B$ and $S$.
 \end{proof}

Next we have the following lemma.
\begin{lemma}\label{lemma4.2}
$m(\mu_1)<\liminf_{\varepsilon\to 0}m_\varepsilon\leq \limsup_{\varepsilon\to 0}m_\varepsilon \leq m(\mu_0)$.
\end{lemma}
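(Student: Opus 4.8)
**Proof proposal for Lemma 4.4 (the statement $m(\mu_1)<\liminf_{\varepsilon\to 0}m_\varepsilon\le \limsup_{\varepsilon\to 0}m_\varepsilon\le m(\mu_0)$).**

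The plan is to prove the two inequalities separately. The upper bound $\limsup_{\varepsilon\to 0}m_\varepsilon\le m(\mu_0)$ is the easy half: by Lemma~4.3 the path $y\mapsto t_\varepsilon(y)\psi_\varepsilon(y)$ lies in $\Phi_\varepsilon$ for small $\varepsilon$ (it agrees with the prescribed map on $S$ by construction, and it maps into $\mathcal M_\varepsilon$ since $t_\varepsilon(y)\psi_\varepsilon(y)\in\mathcal M_\varepsilon$), and Lemma~4.3 gives $\limsup_{\varepsilon\to0}\sup_{y\in B}\Gamma_\varepsilon(t_\varepsilon(y)\psi_\varepsilon(y))\le m(\mu_0)$. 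Taking the infimum over $\Phi_\varepsilon$ yields the claim. I would spell this out in two or three lines.

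The lower bound $\liminf_{\varepsilon\to0}m_\varepsilon>m(\mu_1)$ is the substantive part and I expect it to be the main obstacle. The strategy is a topological/intersection argument à la del Pino--Felmer: for any $\phi\in\Phi_\varepsilon$ I want to produce a point $y_\phi\in B$ at which $\varepsilon\mapsto\phi(y_\phi)$ has energy bounded below by $m(\mu_{0,\varepsilon})+o_\varepsilon(1)$, or at least by $m(\mu_1)+\delta$ for some fixed $\delta>0$. First I would push the map $\phi$ down to a map into $\mathbb R^N$: given $u=\phi(y)\in\mathcal M_\varepsilon$, Lemma~2.8 (the concentration-compactness alternative) shows that if $\Gamma_\varepsilon(\phi(y))$ stays close to $m(\mu_1)$ then $u$ concentrates, after translation by some $z_\varepsilon(y)\in\mathbb R^N$, near a profile $U_{V(x)}$ with $\varepsilon z_\varepsilon(y)\to$ a point of $\overline\Omega$ where $V\le\mu_1$; combined with the min-max characterisation $\mu_{0,\varepsilon}=\inf_{\gamma\in\widetilde{\mathcal T}}\max_{x\in B}V(\gamma(x))$ and the fact that on $S$ the value of $V\circ(\varepsilon z_\varepsilon(\cdot))$ is forced (since $\phi(y)=t_\varepsilon(y)\psi_\varepsilon(y)$ there concentrates at $y\in S$), one gets a competitor path in $\widetilde{\mathcal T}$ and hence some $y_\phi$ with $V(\varepsilon z_\varepsilon(y_\phi))\ge\mu_{0,\varepsilon}-o_\varepsilon(1)\ge\mu_0-o_\varepsilon(1)$. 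Then a lower semicontinuity estimate as in \eqref{3.9} of Lemma~2.8 gives $\Gamma_\varepsilon(\phi(y_\phi))\ge m(\mu_0)-o_\varepsilon(1)>m(\mu_1)$, since $m$ is strictly increasing and $\mu_1<\mu_0$. Passing to $\inf_\phi$ and then $\liminf_{\varepsilon\to0}$ yields $\liminf m_\varepsilon\ge m(\mu_0)>m(\mu_1)$ — in fact this already gives the matching bound, forcing $\lim_{\varepsilon\to0}m_\varepsilon=m(\mu_0)$.

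The delicate points I would need to handle carefully are: (a) making the ``barycenter'' map $y\mapsto\varepsilon z_\varepsilon(y)$ well-defined and continuous on the set where $\Gamma_\varepsilon(\phi(y))$ is near the critical level — this requires a quantitative version of Lemma~2.8 with a uniform concentration radius, together with a standard argument (partition of unity / center of mass truncated to the region of large $|u|^p$-mass) showing the localisation center depends continuously on $u$; (b) checking that on $S$ this map does land in the level set $\{V=\mu_{1,\varepsilon}\}$ with the correct boundary behaviour so that it is admissible in $\widetilde{\mathcal T}$ — here the construction of $B,S$ and $\mu_{1,\varepsilon}$, and the gap $\varepsilon_0<\mu_0-\mu_1$ from Remark~4.2, are what make everything fit; and (c) ensuring $y_\phi$ stays in $\widetilde\Omega$ and not on $\partial\widetilde\Omega$, which is where hypothesis (V3) (via \eqref{eq41}, $\partial_TV\neq0$ on the relevant part of the boundary) is used to rule out concentration escaping to the boundary. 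I would organise the write-up as: (1) the easy upper bound; (2) construction of the localisation map and its continuity; (3) the intersection argument producing $y_\phi$; (4) the energy lower bound via \eqref{3.9} and strict monotonicity of $m$.
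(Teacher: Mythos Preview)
Your overall strategy---upper bound via the test path $y\mapsto t_\varepsilon(y)\psi_\varepsilon(y)$, lower bound via a barycenter map linked back to the finite-dimensional minimax $\mu_{0,\varepsilon}=\inf_{\gamma\in\widetilde{\mathcal T}}\max_B V\circ\gamma$---is exactly the paper's. But there is a genuine gap in the lower-bound half, and two organisational choices in the paper that sidestep difficulties you flag.

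\textbf{The gap.} You invoke ``Lemma~2.8'' to say that if $u=\phi(y)\in\mathcal M_\varepsilon$ has $\Gamma_\varepsilon(u)$ close to $m(\mu_1)$ then $u$ concentrates near a single profile. But Lemma~\ref{lem2.10} requires $\|\Gamma_\varepsilon'(v_\varepsilon)\|_{H_\varepsilon^{-1}}\to0$, not merely the Nehari constraint $\Gamma_\varepsilon'(v_\varepsilon)v_\varepsilon=0$; membership in $\mathcal M_\varepsilon$ with near-minimal energy does not by itself give almost-criticality. The paper closes this by arguing \emph{by contradiction}: assuming $m_{\varepsilon_n}\to m(\mu_1)$ forces $\sup_{y\in B}\Gamma_{\varepsilon_n}(\phi_n(y))\to m(\mu_1)\le\liminf\inf_{\mathcal M_\varepsilon}\Gamma_\varepsilon$, so every $v_n\in\phi_n(B)$ is almost minimising on $\mathcal M_{\varepsilon_n}$; then \emph{Ekeland's principle on $\mathcal M_{\varepsilon_n}$} produces $u_n$ with $\|u_n-v_n\|_{\varepsilon_n}\to0$ and $\|\Gamma'_{\varepsilon_n}(u_n)-\lambda_n\mathcal F'_{\varepsilon_n}(u_n)\|\to0$, and \eqref{eq2.17} forces $\lambda_n\to0$. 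Only then does Lemma~\ref{lem2.10} apply. Your direct (non-contradiction) plan cannot invoke Ekeland uniformly over $\phi(B)$, since points with large energy are not almost minimising; the contradiction set-up is what makes this step work.

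\textbf{Two simplifications you are missing.} First, your difficulty (a)---making a concentration-based localisation center continuous---is avoided entirely in the paper by using an \emph{explicit} barycenter $\beta_n(u)=\big(\int_{(\Omega^1)_{\varepsilon_n}}\varepsilon_n x|u|^p\big)\big/\int|u|^p$, which is manifestly continuous on $L^p\setminus\{0\}$; the concentration analysis is then used only to locate $\beta_n(\phi_n(y))$ near $A_1=\{V=\mu_1\}$, not to define the map. Second, your point (c) is misplaced: hypothesis (V3) is \emph{not} used in this lemma at all (it enters only later, in the proof of Theorem~\ref{th1.2}, to exclude $x_0\in\partial\Omega$). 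Here the contradiction is purely that the barycenters are all close to $\{V=\mu_1\}$ while the minimax forces $V$ at one of them to be at least $\tfrac12(\mu_0+\mu_1)$. Finally, note that the paper only proves the strict inequality $\liminf m_\varepsilon>m(\mu_1)$ as stated; your claim that the argument yields $\liminf m_\varepsilon\ge m(\mu_0)$ is stronger than what the paper establishes at this point.
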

\begin{proof}

By  Lemma \ref{lem2.10}, Lemma \ref{lem4.1} and Lemma \ref{lemma4.1}, it is clear that
$$ m(\mu_1)\leq \liminf_{\varepsilon\to0}\inf_{u\in\mathcal M_\varepsilon}\Gamma_\varepsilon(u)\leq\liminf_{\varepsilon\to0}m_\varepsilon\leq \limsup_{\varepsilon\to0} m_\varepsilon\leq m(\mu_0).$$
To see $\liminf_{\varepsilon\to0}m_\varepsilon>m(\mu_1)$, we argue by contradiction that there is $\varepsilon_n\to 0$ such that
$\lim_{n\to\infty}m_n=m(\mu_1)$ with $m_n:=m_{\varepsilon_n}$.
Let us take $\phi_n\in \Phi_{\varepsilon_n}$
 such that
\begin{equation}\label{eq4.5}
\lim_{n\to\infty}\sup_{y\in B}\Gamma_{\varepsilon_n}(\phi_n(y))= m(\mu_1)\leq \liminf_{n\to\infty}\inf_{u\in\mathcal M_\varepsilon}\Gamma_{\varepsilon_n}(u).
\end{equation}
Since $\partial \Omega$ is $C^1$ and compact, we can choose small $\delta_0>0$ (will be fixed later)  such that the projection
$$\pi_n:\Omega^{\delta_0} \to \set{x\in \Omega| \mbox{dist}(x,\partial\Omega)\geq\varepsilon_n^\frac12}$$
which maps a point in $\Omega^{\delta_0}$ to its unique closest point in $\set{x\in \Omega| \mbox{dist}(x,\partial\Omega)\geq\varepsilon_n^\frac12}$ is continuous. In particular,
\begin{equation}\label{eq4.6}
\pi_n=id\quad \text{on}\quad S.
\end{equation}
For each $u\in L^p(\mathbb R^N)\setminus\{0\}$ with $p\in (2,2^*)$, define the barycenter type function
$$\beta_n(u)=\frac{\int_{(\Omega^{1})_{\varepsilon_n}}\varepsilon_nx |u(x)|^pdx}{\int_{\mathbb R^N} |u(x)|^pdx}.$$

We claim that
\begin{equation}\label{eq4.7}
\lim_{n\to0}\sup_{y\in B}\text{dist}(\beta_n(\phi_n(y)), A_1)=0,\quad\mbox{where}\ A_1=\set{y\in\overline\Omega|V(y)=\mu_1}.
\end{equation}
In fact, let $v_n\in \phi_n(B)$  be
such that $\lim_{n\to\infty}\text{dist}(\beta_n(v_n),A_1)>0$. Then
by \eqref{eq4.5} and the Ekeland Variational Principle, there is $u_n\in \mathcal M_{\varepsilon_n}$ such that
\begin{align*}
&\lim_{n\to\infty}\|u_n-v_n\|_{\varepsilon_n}=0,\quad\Gamma_{\varepsilon_n}(u_{n})\leq \Gamma_{\varepsilon_n}(v_{n})\quad \text{and}\\
&\lim_{n\to\infty}\|\Gamma_{\varepsilon_n}'(u_{n})-\lambda_{n}\mathcal F_{\varepsilon_n}'(u_{n})\|_{H^{-1}_{\varepsilon_n}}= 0,
\end{align*}
where $\mathcal F_{\varepsilon_n}$ is defined by \eqref{F} for $\varepsilon=\varepsilon_n$ and   $\lambda_n\in\mathbb R$ is such that
$$\|\Gamma_{\varepsilon_n}'(u_{n})-\lambda_{n}\mathcal F_{\varepsilon_n}'(u_{n})\|_{H^{-1}_{\varepsilon_n}}=\min_{\lambda\in \mathbb R}\|\Gamma_{\varepsilon_n}'(u_{n})-\lambda\mathcal F_{\varepsilon_n}'(u_{n})\|_{H^{-1}_{\varepsilon_n}}.$$
Since $\lim_{n\to\infty}\Gamma_{\varepsilon_n}(u_n)\leq m(\mu_1)$, by Lemma \ref{lem 2.3}, $\|u_n\|_{\varepsilon_n}$ and $\int_{\mathbb R^N} u_n^2|\log u^2_n|$ are bounded.
Then by \eqref{eq2.17}, we have $0=\lim_{n\to\infty}|\Gamma_{\varepsilon_n}'(u_n)u_n-\lambda_n\mathcal F_{\varepsilon_n}'(u_n)u_n|\geq \sigma_0\lim_{n\to\infty}|\lambda_n|$. Thus,  $\lim_{n\to\infty}\lambda_n=0$,
which implies, by \cite[Lemma 2.1(iii)]{Tanaka-Zhang}, for any $w\in \mathcal D$,
 $$\lim_{n\to\infty}\sup_{y\in \mathbb R^N} |\Gamma_{\varepsilon_n}'(u_n)w(\cdot-y)|=\lim_{n\to\infty}\sup_{y\in\mathbb R^N} |\Gamma_{\varepsilon_n}'(u_n)w(\cdot-y)-\lambda_n\mathcal F_{\varepsilon_n}'(u_n)w(\cdot-y)|=0.
 $$
 Then checking the proof directly, it is clear that the same conclusion in Lemma \ref{lem2.10} holds for $u_n$. Thus up to a subsequence, we can find $z_n$ with $\varepsilon_nz_n\to z_0\in A_1$ such that
$u_n(\cdot+z_n)\to U_{\mu_1}$ strongly in $L^p(\mathbb R^N)$. Since $U_{\mu_1}$ is radially symmetric, we can easily
check that $\beta_n(v_n)\to z_0$, which is a contradiction.

Note that
$\pi_n\circ\beta_n\circ\phi_n$ maps continuously from $B$ to $\Omega$
 and that $\pi_n\circ\beta_n\circ\phi_n$ is
identity on $S$ by \eqref{eq4.6}. Therefore,
$\pi_n\circ\beta_n\circ\phi_n\in \widetilde {\mathcal T}$ and there is $y_n\in B$ such that
$$\mu_0\leq \mu_{0,\varepsilon_n}\leq
V(\pi_n\circ\beta_n\circ\phi_n(y_n) ).
$$
Moreover, according to the definition of $\pi_n$, we have
$|\pi_n\circ\beta_n\circ\phi_n(y_n)- \beta_n\circ\phi_n(y_n) |\leq 2\delta_0$.
By this and the fact that $|\nabla V(x)|$  is bounded in $\Omega^{2\delta_0}$, if we make $\delta_0$ smaller,  we shall have
$V(\beta_n\circ\phi_n(y_n))\geq \frac{1}{2}(\mu_0+\mu_1)$, which
 contradicts to \eqref{eq4.7}.
\end{proof}
Now we are ready to proof Theorem \ref{th1.2}.
\begin{proof}[Proof of theorem \ref{th1.2}]
According to Lemma \ref{lemma4.1}, Lemma  \ref{lemma4.2} and Corollary \ref{cor 2.4}, for each $\varepsilon\in(0,\varepsilon_0)$, we apply the minimax principle
(see e.g. \cite{willem}) to obtain a critical point $u_\varepsilon$ of $\Gamma_\varepsilon$  such that
\begin{equation}\label{4.10}
\Gamma_\varepsilon(u_\varepsilon)=m_\varepsilon\ \ \ \text{and}\ \ \
m(\mu_1)<\liminf_{\varepsilon\to 0}m_\varepsilon\leq \limsup_{\varepsilon\to 0}m_\varepsilon\leq   m(\mu_0).
\end{equation}
By similar arguments in Proposition \ref{pro 2.6},
we may assume that $u_\varepsilon>0$.
Note that by our choice of $\mu_1$, $m(\mu_0)<2m(\mu_1)$.
Then by Lemma \ref{lem2.10}, up to a subsequence,  we can obtain  that
$\widetilde u_\varepsilon:=u_\varepsilon(\cdot+x_\varepsilon)\to u$ weakly in $H^1(\mathbb R^N)$ and strongly in $L^p(\mathbb R^N)$ for $p\in (2,2^*)$, where  $x_\varepsilon\in \mathbb R^N$ satisfies
 $\varepsilon x_\varepsilon\to x_0\in \overline\Omega$ and $u=U_{V(x_0)}\in \mathcal D$  is the positive radial solution to
$-\Delta u +V(x_0) u=u\log u^2$.
 Moreover,  similarly to Proposition \ref{pro 3.3}, there are $C, c>0$ such that
\begin{equation}\label{4.11}
|u_\varepsilon(x)|\leq C e^{-c|x-x_\varepsilon|^2}.
 \end{equation}
 This in turn implies that $u_\varepsilon$ solves
 \begin{equation}\label{4.12}
 -\Delta u_\varepsilon +V_\varepsilon(x) u_\varepsilon=f_\varepsilon(x,u_\varepsilon).
 \end{equation}
 Using the $L^p$-estimates (\cite{Trudinger1997}) and \eqref{4.11}, we can deduce
 \begin{equation}\label{eq4.12}
|\nabla u_\varepsilon(x)|+|u_\varepsilon(x)|\leq C e^{-c|x-x_\varepsilon|^2}.
 \end{equation}
 By the convergence of $\widetilde u_\varepsilon$ in $L^p(\mathbb R^N)$ and
  $|(s^2\log s^2-g(s)s)'|\leq C|s|^{p-1}$, we have
$$\int_{\mathbb R^N}(1-\chi_\varepsilon)(u_\varepsilon^2\log u_\varepsilon^2-g(u_\varepsilon)u_\varepsilon)\to\int_{\mathbb R^N}(1-\widetilde \chi)(u^2\log u^2-g(u)u),
$$
where $\widetilde \chi$ is the limit function of $\chi_\varepsilon(\cdot+x_\varepsilon)$ as in \eqref{eq2.23} satisfying
$(1-\widetilde \chi)u\log u^2+\widetilde\chi g(u)=u\log u^2$. Especially, we recall that in the proof of Lemma \ref{lem2.10}, if $\limsup_{\varepsilon\to0}{\rm dist}(x_\varepsilon,\partial \Omega_\varepsilon)<\infty$, then $\widetilde\chi$ is the characteristic function of a half space $H$ and
\begin{equation}\label{4.13}
|u(x)|\leq e^{-1},\ \ x\in H.
\end{equation}
Then  we deduce
by \eqref{eq2.23}, \eqref{4.12} and the Fatou's Lemma that
$$\begin{aligned}
\limsup_{\varepsilon\to 0}\int_{\mathbb R^N}|\nabla u_\varepsilon|^2
+ V_\varepsilon u_\varepsilon^2
&=\int_{\mathbb R^N}\Big(|\nabla u|^2+V(x_0) u^2-g(u)u\Big)+\liminf_{\varepsilon\to0}\int_{\mathbb R^N}g(u_\varepsilon)u_\varepsilon\\
&\leq \int_{\mathbb R^N}\Big(|\nabla u|^2+V(x_0) u^2\Big).
\end{aligned}$$
By $\varepsilon x_\varepsilon\to x_0\in \overline \Omega$ and \eqref{eq4.12}, we can conclude that
$\widetilde u_\varepsilon\to u$ strongly in
$H^1(\mathbb R^N)$.
And as a result of \eqref{4.10}, we have $V(x_0)\in (\mu_1,\mu_0]$.

Next we show that $x_0\in \Omega$.
If $x_0\in\partial \Omega$,
 then in a small neighborhood $B(x_0,\rho)$,  $ \Omega$ can be described by
 $$\Omega\cap B(x_0,\rho)=\set{x\in B(x_0,\rho)|   h (x)< 0},$$
 where  $h$ is a smooth function such that $h(x)=0$ and $|\nabla h(x)|=1$ for $x\in\partial \Omega$. Then the unit outward normal vector to $\partial\Omega$ at $x$ is $\nabla h(x)$.
On the other hand, by $V(x_0)\in (\mu_1,\mu_0]$ and Remark \ref{rek4.1},
we can fix $\rho$ small enough such that $\frac{\partial V(x)}{\partial\nu_x}\geq 2\tau>0$ for all $x\in B(x_0, 2\rho)$, where $\nu_x$ denotes the unit vector in the direction of  $\nabla V(x)$ projected
to the tangential space of $\partial\Omega$ at $y$
with $\mbox{dist}(x,y)=\mbox{dist}(x,\partial\Omega)$,
which implies
\begin{equation}\label{4.14}
\liminf_{\varepsilon\to0}\inf_{x\in \Lambda_\varepsilon} \frac{\partial V(\varepsilon x)}{\partial\nu_{\varepsilon y_\varepsilon}} \geq \tau >0,\quad\mbox{where}\ \Lambda_\varepsilon=B(x_\varepsilon, \rho/\varepsilon),
\end{equation}
and $y_\varepsilon\in \partial \Omega_\varepsilon$ be such that $\mbox{dist}(x_\varepsilon, y_\varepsilon)=\mbox{dist}(x_\varepsilon, \partial \Omega_\varepsilon)$.
Setting $\nu_\varepsilon:=\nu_{\varepsilon y_\varepsilon}$,
multiplying \eqref{4.12} by $\nabla u_\varepsilon\cdot \nu_\varepsilon$ and integrating by parts in $\Lambda_\varepsilon=B(x_\varepsilon, \rho/\varepsilon)$, we have
\begin{equation}\label{4.16}
\begin{aligned}
&\varepsilon\Big(\int_{\partial \Lambda_\varepsilon}\Big(|\nabla u_\varepsilon|^2+V_\varepsilon u_\varepsilon^2\Big) \frac{x-x_\varepsilon}{\rho}\cdot \nu_\varepsilon
-\int_{\Lambda_\varepsilon} \frac{\partial V(\varepsilon x)}{\partial \nu_\varepsilon}u_\varepsilon^2\Big)\\
=&2\int_{\Omega_\varepsilon\cap\Lambda_\varepsilon}\frac{\partial u_\varepsilon}{\partial \nu_\varepsilon}u_\varepsilon\log u_\varepsilon^2+2\int_{\Omega_\varepsilon\setminus \Lambda_\varepsilon}\frac{\partial u_\varepsilon}{\partial \nu_\varepsilon}g(u_\varepsilon)\\
=&\varepsilon\int_{\partial \Lambda_\varepsilon}F_\varepsilon(x,u_\varepsilon)\frac{x-x_\varepsilon}{\rho}\cdot \nu_\varepsilon
+\int_{\partial \Omega_\varepsilon\cap \Lambda_\varepsilon}\Big(u_\varepsilon^2\log u_\varepsilon^2-u_\varepsilon^2-2G(u_\varepsilon)\Big)\nabla h(\varepsilon x)\cdot \nu_\varepsilon.
\end{aligned}
\end{equation}
If $\limsup_{\varepsilon\to0}\mbox{dist}(x_\varepsilon,\partial \Omega_\varepsilon)=\infty$,
then by \eqref{eq4.12}, up to a subsequence, we  assume without loss of generality that
$u_\varepsilon^2\log u_\varepsilon^2-u_\varepsilon^2-2G(u_\varepsilon)=0$ on $\partial\Omega_\varepsilon$.
As a result, by \eqref{eq4.12}, \eqref{4.14} and \eqref{4.16},
we obtain
$$\tau\int_{\mathbb R^N} u^2\leq \liminf_{\varepsilon\to 0}\int_{\Lambda_\varepsilon} \frac{\partial V(\varepsilon x)}{\partial \nu_\varepsilon}u_\varepsilon^2\leq 0,
$$
which is a contradiction.
If $\limsup_{\varepsilon\to0}\mbox{dist}(x_\varepsilon,\partial \Omega_\varepsilon)<\infty$,
again by \eqref{eq4.12}, there is $r>0$ sufficiently large such that $u_\varepsilon^2\log u_\varepsilon^2-u_\varepsilon^2-2G(u_\varepsilon)=0$ for $x\in (\partial \Omega_\varepsilon\cap \Lambda_\varepsilon)\setminus B(x_\varepsilon,r)$. For
$x\in \partial \Omega_\varepsilon\cap B(x_\varepsilon,r)$,
 we calculate that
\begin{equation*}
\begin{aligned}
 \int_{\partial \Omega_\varepsilon\cap B(x_\varepsilon,r)}\Big(u_\varepsilon^2&\log u_\varepsilon^2-u_\varepsilon^2-2G(u_\varepsilon)\Big)\nabla h(\varepsilon x)\cdot \nu_\varepsilon\\
 =&\int_{(\partial \Omega_\varepsilon-x_\varepsilon)\cap B(0,r)}\Big(\widetilde u_\varepsilon^2\log \widetilde u_\varepsilon^2-\widetilde u_\varepsilon^2-2G(\widetilde u_\varepsilon)\Big)\nabla h(\varepsilon x+\varepsilon x_\varepsilon)\cdot \nu_\varepsilon,
 \end{aligned}\end{equation*}
 where $\partial \Omega_\varepsilon-x_\varepsilon=\set{x-x_\varepsilon|x\in\partial \Omega_\varepsilon}$.
For $x\in (\partial \Omega_\varepsilon-x_\varepsilon)\cap B(0, r)$, there uniformly holds
 $$|\varepsilon^{-1}\nabla h(\varepsilon x+\varepsilon x_\varepsilon)\cdot \nu_\varepsilon|= |\varepsilon^{-1}(\nabla h(\varepsilon x+\varepsilon x_\varepsilon)-\nabla h(\varepsilon y_\varepsilon))\cdot \nu_\varepsilon|\leq Cr.
 $$
 Therefore,
dividing \eqref{4.16} by $\varepsilon$
and letting $\varepsilon\to 0$, by \eqref{eq4.12}--\eqref{4.14}, we obtain
$$\tau\int_{\mathbb R^N} u^2\leq Cr
\int_{\partial H}\Big(u^2\log u^2-u^2-2G(u)\Big)=0,
$$
where we also used the uniform convergence  $u_\varepsilon(x+x_\varepsilon)\to u(x)$ in $ B(0,r)$.
Then we get a contradiction and have shown that $x_0\in \Omega$. Therefore,
by \eqref{4.11} and $\varepsilon x_\varepsilon\to x_0$, $f_\varepsilon(x,u_\varepsilon)=u_\varepsilon\log u_\varepsilon^2$ and $u_\varepsilon$ is a solution to \eqref{eq2}. In particular,
we may assume $x_\varepsilon$ is the unique local maximum point of $u_\varepsilon$. Finally an  argument similar to \cite{DelPino-Felmer1997}  gives a way to find a family of solutions $\{u_\varepsilon\}$ with maximum points $\{x_\varepsilon\}$ such that  $V(\varepsilon x_\varepsilon)\to \mu_0$ and $\nabla V(\varepsilon x_\varepsilon)\to0$.
\end{proof}


\section{Proof of Theorem \ref{th2}}
In this section, we assume (P1) and (P2) hold. Note that
$\liminf_{|x|\to\infty}V(x)|x|^{-\mu}>-\infty$ implies  $\liminf_{|x|\to\infty}V(x)|x|^{-\mu'}>-\infty$ for $\mu'\leq \mu$.
So, without loss of generality, we may assume $\mu=2-2\kappa$ and $\kappa\in [0,1]$ in (P2).
By (P1),
$$\mathcal P=\{ x \in \Omega:\   P(x) =P_0\}$$ is a nonempty compact subset of $\Omega$.
Without loss of generality, assume $0\in\mathcal P\subset\Omega\subset B(0,R_0/2)$, where $R_0>0$  is a constant fixed in order that
$K(x)\geq \sigma_1 |x|^{-\kappa}$ in $\mathbb R^N\setminus B(0,R_0)$ and $K(x)\geq \sigma_2>0$  in $B(0,R_0)$
for some constant $\sigma_1\in (0,\liminf_{x\to\infty}K(x)|x|^{\kappa})$ and $\sigma_2>0$.

Moreover, we can also assume without loss of generality that
\begin{equation*}
V(x)\geq 1,\ \ \text { for\ }x\in B(0,R_0).
\end{equation*}
By setting $u(x)=v(\varepsilon x)$, we consider the equation
\begin{equation}\label{eq5.1}
-\Delta u+V(\varepsilon x)u=K(\varepsilon x)u\log u^2 \  \ \text { in}\ \ \mathbb R^N.
\end{equation}
We first renew some notation in Section \ref{sec2}.
Redefine
$\eta_\varepsilon(x,t)$ and $\widehat\eta_\varepsilon(x,t)$ in \eqref{eta} and \eqref{hateta} with
\[\phi_\varepsilon(x)=\exp\big\{-\varepsilon^{1-\kappa}|x|^{2-\kappa}\big\}.
\]
Reset
$\widetilde V$ in \eqref{tildev} as
\begin{equation*}
\widetilde V(x)=
\begin{cases}
V(x),\ &|x|\leq R_0;\\
\max\{V(x),|x|^{2-2\kappa}\}, \ &|x|\geq R_0,
\end{cases}
\end{equation*}
and renew $\widetilde V_\varepsilon$, $\overline V_\varepsilon$, $\Psi_\varepsilon$ and $H_\varepsilon$ correspondingly.
Redefine
\begin{equation*}
\Gamma_{\varepsilon}(u)=\frac{1}{2}\|u\|_\varepsilon^2+\Psi_\varepsilon(u)
-\frac{1}{2}\int_{\mathbb R^N}K(x)F_\varepsilon(x,u)dx,\quad u\in H_\varepsilon,
\end{equation*}
for the newly defined $\widetilde V_\varepsilon$, $\overline V_\varepsilon$, $\Psi_\varepsilon$ and $H_\varepsilon$.
We note that in the case $\kappa =1$, the redefined $\Gamma_\varepsilon$ is not necessarily well-defined  on the new $H_\varepsilon$.
To overcome this difficulty, as in \cite{ITWZ}, we introduce for $R\geq R_0$ the Hilbert space
$$H_{\varepsilon,R}=\Set{u\in H_\varepsilon |
\int_{\mathbb R^N}|x|^2u^2<\infty }
$$
with inner product and norm
$$(u,v)_{\varepsilon,R}=(u,v)_\varepsilon+\int_{\mathbb R^N}\Big[\big(|x|-\frac R\varepsilon\big)^+\Big]^2uv,\quad
\|u\|_{\varepsilon,R}=(u,u)_{\varepsilon,R}^\frac12.$$
For each $R\geq R_0$, we consider  the functional
\begin{equation*}
\Gamma_{\varepsilon,R}(u)=\frac{1}{2}\|u\|_{\varepsilon,R}^2+\Psi_\varepsilon(u)
-\frac{1}{2}\int_{\mathbb R^N}K(\varepsilon x)F_\varepsilon(x,u)dx,\quad u\in H_{\varepsilon,R},
\end{equation*}
which is $C^1$ on $H_{\varepsilon,R}$ by Lemma \ref{lem 2.1}.
 We can check that the conclusions of Lemma \ref{lem etaq}, Corollary \ref{cor2.3}, Lemma \ref{lem 2.3}, Corollary \ref{cor 2.4} and Lemma \ref{lem 2.5} still hold for $\Gamma_{\varepsilon, R}$ by similar arguments.
 Therefore, we can get the following existence result.
 \begin{proposition}\label{pro4.1} There exist $\varepsilon_0>0$ and $M_1>M_0>0$ such that for each $\varepsilon\in (0, \varepsilon_0)$ and $R\geq R_0$, $\Gamma_{\varepsilon, R}$ admits a nontrivial critical point $u_{\varepsilon,R}$
 satisfying
 $$\Gamma_{\varepsilon, R}(u_{\varepsilon,R})\in [M_0,M_1],\quad \|u_{\varepsilon,R}\|_{\varepsilon,R}\leq M_1.$$
 Moreover, $u_{\varepsilon,R}$ is a positive weak solution to
\begin{equation}\label{eq5.2}
-\Delta u+T_R(x,u)u= K(\varepsilon x)f_\varepsilon(x,u),\quad x\in \mathbb R^N,
\end{equation}
where
$$\begin{aligned}
T_R(x,u)=\widetilde V_\varepsilon(x)&+\overline V_\varepsilon (x)(\frac{1}{2}\eta_\varepsilon(x,|u|)|u|+\widehat\eta_\varepsilon(x,|u|))+\Big[\big(|x|-\frac R\varepsilon\big)^+\Big]^2.
\end{aligned}
$$
 \end{proposition}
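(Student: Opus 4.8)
The plan is to reproduce, for the functional $\Gamma_{\varepsilon,R}$ on $H_{\varepsilon,R}$, the mountain--pass scheme of Section~\ref{sec2} that yielded Proposition~\ref{pro 2.6}. As noted above, the analogues of Lemma~\ref{lem etaq}, Corollary~\ref{cor2.3}, Lemma~\ref{lem 2.3}, Corollary~\ref{cor 2.4} and Lemma~\ref{lem 2.5} hold for $\Gamma_{\varepsilon,R}$ by the same arguments; the only computational points to recheck are that, with the new $\phi_\varepsilon(x)=\exp\{-\varepsilon^{1-\kappa}|x|^{2-\kappa}\}$ and $\widetilde V(x)=\max\{V(x),|x|^{2-2\kappa}\}$ for $|x|\ge R_0$, one still has $\int_{|x|\ge R_0\varepsilon^{-1}}|\varepsilon x|^{2-2\kappa}\phi_\varepsilon^2\le Ce^{-c\varepsilon^{-1}}$ (so $\Psi_\varepsilon$ and $\Psi_\varepsilon'$ remain exponentially small), and that the boundedness of $K$ together with $K\ge\sigma_2>0$ on $\Omega_\varepsilon$ leaves the energy estimates of Lemma~\ref{lem 2.3} intact. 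The delicate point throughout --- and the only place the argument differs from Section~\ref{sec2} --- is that $\varepsilon_0,M_0,M_1$ must be produced uniformly in $R\ge R_0$.

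For the geometry, fix $\omega\in C_0^\infty(B(0,1))\setminus\{0\}$. For $\varepsilon$ small, $B(0,1)\subset\Omega_\varepsilon$, so on $\mathrm{supp}\,\omega$ one has $\varepsilon x\in\Omega\subset B(0,R_0/2)$, hence $\chi_\varepsilon\equiv0$, $\overline V_\varepsilon\equiv0$, $\Psi_\varepsilon(t\omega)=0$, $K(\varepsilon x)\in[\sigma_2,|K|_\infty]$, and $[(|x|-R\varepsilon^{-1})^+]^2\equiv0$ for every $R\ge R_0$, so $\|t\omega\|_{\varepsilon,R}=\|t\omega\|_\varepsilon$. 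Thus $\Gamma_{\varepsilon,R}(t\omega)$ equals, modulo an $R$-free term that is $O(t^2)$ with coefficients bounded uniformly in $\varepsilon\in(0,\varepsilon_0)$, a strictly negative multiple of $t^2\log t^2$ bounded away from zero, and so tends to $-\infty$ as $t\to\infty$ uniformly in $\varepsilon\in(0,\varepsilon_0)$ and $R\ge R_0$; this produces $t_0>0$ with $\Gamma_{\varepsilon,R}(t_0\omega)<-2$ and $\sup_{t\ge0}\Gamma_{\varepsilon,R}(t\omega)\le M_1$. Near $0$, since $F_\varepsilon(x,u)\le\tfrac12(u^2\log u^2)^+\le C|u|^{p_0}$ for a fixed $p_0\in(2,2^*)$, the analogue of Corollary~\ref{cor2.3}, Lemma~\ref{lem 2.1}, and $\|u\|_\varepsilon\le\|u\|_{\varepsilon,R}$ give $\Gamma_{\varepsilon,R}(u)\ge\tfrac12\|u\|_{\varepsilon,R}^2-C\|u\|_{\varepsilon,R}^{p_0}-Ce^{-c\varepsilon^{-1}}$ with $C$ independent of $\varepsilon$ and $R$; hence there are $r_0,M_0>0$, independent of $\varepsilon$ and $R$, with $\inf_{\|u\|_{\varepsilon,R}=r_0}\Gamma_{\varepsilon,R}(u)\ge M_0$ and $\inf_{\|u\|_{\varepsilon,R}\le r_0}\Gamma_{\varepsilon,R}(u)\ge-1$ for small $\varepsilon$.

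Next, set $d_{\varepsilon,R}=\inf_{h\in\mathcal H_{\varepsilon,R}}\max_{s\in[0,1]}\Gamma_{\varepsilon,R}(h(s))$ with $\mathcal H_{\varepsilon,R}=\{h\in C([0,1],H_{\varepsilon,R}):h(0)=0,\ \Gamma_{\varepsilon,R}(h(1))<-2\}$. The path $s\mapsto st_0\omega$ lies in $\mathcal H_{\varepsilon,R}$, so $M_0\le d_{\varepsilon,R}\le M_1$ uniformly in $\varepsilon\in(0,\varepsilon_0)$ and $R\ge R_0$. For each fixed $(\varepsilon,R)$, $\Gamma_{\varepsilon,R}$ satisfies the Palais--Smale condition: Palais--Smale sequences are bounded by the analogue of Lemma~\ref{lem 2.3}, and one upgrades weak to strong convergence via the compact embedding of $H_{\varepsilon,R}\subset H_\varepsilon$ into $L^q(\mathbb R^N)$, $\tfrac{2N}{N+2}<q<2^*$ (Lemma~\ref{lem 2.1}), exactly as in Corollary~\ref{cor 2.4}. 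Because $\Gamma_{\varepsilon,R}(|u|)=\Gamma_{\varepsilon,R}(u)$, replacing minimizing paths by their moduli and invoking the minimax principle (\cite[Theorem~2.8]{willem}) yields a critical point $u_{\varepsilon,R}\ge0$ with $\Gamma_{\varepsilon,R}(u_{\varepsilon,R})=d_{\varepsilon,R}\in[M_0,M_1]$; applying the analogue of Lemma~\ref{lem 2.3} to this critical point gives $\|u_{\varepsilon,R}\|_{\varepsilon,R}\le M_1$ after enlarging $M_1$ if necessary. Finally $u_{\varepsilon,R}$ weakly solves \eqref{eq5.2} as the Euler--Lagrange equation, and the strong maximum principle of \cite{Va} gives $u_{\varepsilon,R}>0$. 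The hard part, as flagged above, will be keeping $\varepsilon_0,M_0,M_1$ free of $R$; this is secured by the observation that the test path and the near-zero coercivity estimate only see $\|\cdot\|_\varepsilon$ --- the extra weight $[(|x|-R\varepsilon^{-1})^+]^2$ vanishing on $B(0,1)$ for all $R\ge R_0$ and only strengthening $\|\cdot\|_{\varepsilon,R}$ elsewhere --- while the compactness of $\Gamma_{\varepsilon,R}$ is needed only for each fixed pair $(\varepsilon,R)$.
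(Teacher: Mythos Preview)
Your proposal is correct and follows the same route as the paper, which simply asserts that the results of Section~\ref{sec2} carry over to $\Gamma_{\varepsilon,R}$ and then states Proposition~\ref{pro4.1}. You have in fact supplied more detail than the paper does, correctly isolating the one nontrivial issue---uniformity of $\varepsilon_0,M_0,M_1$ in $R\ge R_0$---and explaining why it holds: the test path $s\mapsto st_0\omega$ has support in $B(0,1)\subset B(0,R_0\varepsilon^{-1})$, so the extra weight $[(|x|-R\varepsilon^{-1})^+]^2$ never sees it, while the near-zero coercivity estimate only improves when $\|\cdot\|_\varepsilon$ is replaced by $\|\cdot\|_{\varepsilon,R}$.
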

 Clearly, the conclusion of Lemma \ref{lem 2.7} holds for $u_{\varepsilon,R}$ with a constant  $C>0$ independent of $\varepsilon\in (0,\varepsilon_0)$ and $R\geq R_0$.
 Therefore we have
 \begin{align}
 \sup_{\varepsilon\in (0,\varepsilon_0),R\geq R_0}&\|u_{\varepsilon,R}\|_{L^\infty(\mathbb R^N)}<\infty.\label{eq5.3}
 \end{align}
To describe more details on the localization of $u_{\varepsilon,R}$,
as in Section \ref{sec3}, we  investigate the ground state of the following functional
$$I_{a,b}(u)=\frac{1}{2}\int_{\mathbb R^N}|\nabla u|^2+(a+b) u^2
-b u^2\log u^2, \quad u\in H^1(\mathbb R^N),$$
where $a\in\mathbb R$ and $b>0$.
Denote
\begin{equation*}
m(a,b):=\inf_{u\in\mathcal N_{a,b}}I_{a,b}(u),
\end{equation*}
where $\mathcal N_{a,b}:=\set{u\in \mathcal D\setminus\{0\}| \int_{\mathbb R^N}|\nabla u|^2+a u^2-b u^2\log u^2=0}$.
It is easy to check that $m(a,b)$ is achieved by the unique
positive solution $U_{a,b}(x):=e^{\frac{a}{2b}}U(b^\frac12x)$ (up to translations in
$\mathbb R^N$) to

\begin{equation*}
\begin{cases}
-\Delta v+a v=b v\log v^2 \  \ &\text { in}\ \ \mathbb R^N,\\
v(x)\to 0 \ \ &\text { as}\ \ |x|\to\infty,
\end{cases}
\end{equation*}
and
$$m(a,b)=\max_{t\geq 0}I_{a,b}(tU_{a,b})=I_{a,b}(U_{a,b})=\frac b2|U_{a,b}|^2_2=\frac12e^{\frac ab}b^{-\frac N2+1}|U|_2^2.
$$
We note that
\begin{equation*}
m(V(z),K(z))>\frac{1}{2}P_0|U|_2^2
\quad \text{for}\quad z\in \overline\Omega\setminus \mathcal P.
\end{equation*}
By this and similar arguments to Lemma \ref{lem2.10} and Lemma \ref{lem 3.1}, we have
\begin{lemma}\label{lem 4.2} It holds that
$$\lim_{\varepsilon\to0}\sup_{R\geq R_0}\Gamma_{\varepsilon,R}(u_{\varepsilon,R})= \frac{1}{2}P_0|U|_2^2.$$
\end{lemma}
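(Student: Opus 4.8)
The plan is to establish the two one-sided estimates
\[
\limsup_{\varepsilon\to0}\ \sup_{R\ge R_0}\Gamma_{\varepsilon,R}(u_{\varepsilon,R})\le \tfrac12 P_0|U|_2^2,
\qquad
\liminf_{\varepsilon\to0}\ \inf_{R\ge R_0}\Gamma_{\varepsilon,R}(u_{\varepsilon,R})\ge \tfrac12 P_0|U|_2^2,
\]
which, since $\inf_{R}(\cdot)\le\sup_{R}(\cdot)$, force the claimed limit. Throughout I would use $m(a,b)=\tfrac12 e^{a/b}b^{-N/2+1}|U|_2^2$, so that $m(V(z),K(z))=\tfrac12 P(z)|U|_2^2$; since $P(z)\ge P_0$ for $z\in\Omega$ and $P(z)\ge P_1>P_0$ for $z\in\partial\Omega$, this gives $m(V(z),K(z))\ge\tfrac12 P_0|U|_2^2$ for every $z\in\overline\Omega$, with equality exactly on $\mathcal P$. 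Recall also that $\Gamma_{\varepsilon,R}(u_{\varepsilon,R})$ equals the mountain-pass value $d_{\varepsilon,R}$ of $\Gamma_{\varepsilon,R}$ produced by the transplanted Lemma \ref{lem 2.5} and Proposition \ref{pro 2.6}, as in Proposition \ref{pro4.1}.

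For the upper bound I would mimic Lemma \ref{lem 3.1}. Assuming $0\in\mathcal P$, set $u_0=U_{V(0),K(0)}$, fix (as in \eqref{2.5}) a $t_0$ with $I_{V(0),K(0)}(tu_0)<-4$ for $t\ge t_0$, and pick $u_k\in C_0^\infty(\mathbb R^N)$ with $\sup_{t\in[0,t_0]}|I_{V(0),K(0)}(tu_0)-I_{V(0),K(0)}(tu_k)|\le 1/k$. For $\varepsilon$ small and all $R\ge R_0$ one has, on the fixed set $\mathrm{supp}\,u_k$, that $\Psi_\varepsilon(tu_k)=0$, the penalization and auxiliary function are inactive so $K(\varepsilon x)f_\varepsilon(x,tu_k)=K(\varepsilon x)\,tu_k\log(tu_k)^2$, the weight $[(|x|-R/\varepsilon)^+]^2$ vanishes because $R/\varepsilon\ge R_0/\varepsilon\to\infty$, and $\widetilde V_\varepsilon(x)=V(\varepsilon x)\to V(0)$, $K(\varepsilon x)\to K(0)$ uniformly. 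Hence $\Gamma_{\varepsilon,R}(tu_k)\to I_{V(0),K(0)}(tu_k)$ uniformly for $t\in[0,t_0]$ and $R\ge R_0$, so $\gamma_k(s):=st_0u_k$ lies in the mountain-pass class of $\Gamma_{\varepsilon,R}$ for small $\varepsilon$, yielding $\sup_{R\ge R_0}d_{\varepsilon,R}\le \max_{t\ge0}I_{V(0),K(0)}(tu_0)+2/k=m(V(0),K(0))+2/k=\tfrac12 P_0|U|_2^2+2/k$; letting $k\to\infty$ gives the upper bound.

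For the lower bound I would argue by contradiction along the lines of Lemma \ref{lem2.10}: suppose $\varepsilon_n\to0$, $R_n\ge R_0$ and $\Gamma_{\varepsilon_n,R_n}(u_n)\to L<\tfrac12 P_0|U|_2^2$ with $u_n:=u_{\varepsilon_n,R_n}$. The transplanted Lemma \ref{lem 2.3} bounds $\|u_n\|_{\varepsilon_n,R_n}$ and $\int u_n^2|\log u_n^2|$; using $2\Gamma_{\varepsilon_n,R_n}(u_n)-\Gamma'_{\varepsilon_n,R_n}(u_n)u_n>0$ together with Lemma \ref{lem etaq}(v), Corollary \ref{cor2.3} and $K\ge\sigma_2$ on $B(0,R_0)$, $K(x)\ge\sigma_1|x|^{-\kappa}$ outside, one gets a uniform positive lower bound for $\int|u_n|^p$, and P.L. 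Lions' lemma produces $y_n$ with $\liminf_n\int_{B(y_n,1)}|u_n|^2>0$ and, up to a subsequence, $u_n(\cdot+y_n)\rightharpoonup v\neq0$ in $H^1(\mathbb R^N)$, $v\in\mathcal D$. If $\mathrm{dist}(y_n,\Omega_{\varepsilon_n})\to\infty$, then $\chi_{\varepsilon_n}(\cdot+y_n)\to1$, and testing the limiting relation with $v$ while using $\widetilde V_{\varepsilon_n}(\cdot+y_n)\ge1$, $K>0$ and $g(s)s\le0$ gives $\int|\nabla v|^2+v^2\le0$, impossible; hence $\varepsilon_ny_n\to z_0\in\overline\Omega$. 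Since $z_0\in\overline\Omega\subset B(0,R_0/2)$ while $R_n\ge R_0$, for each fixed $x$ we have $|x+y_n|-R_n/\varepsilon_n\to-\infty$, so $[(|x+y_n|-R_n/\varepsilon_n)^+]^2\to0$ locally; also $\widetilde V_{\varepsilon_n}(\cdot+y_n)=V(\varepsilon_n(\cdot+y_n))\to V(z_0)$ and $K(\varepsilon_n(\cdot+y_n))\to K(z_0)$ locally uniformly. Thus $v$ solves $-\Delta v+V(z_0)v=K(z_0)\big((1-\widetilde\chi)v\log v^2+\widetilde\chi g(v)\big)$ with $\widetilde\chi$ either $0$ or the indicator of a half-space, and as in Lemma \ref{lem2.10} the monotonicity of $s\mapsto g(s)/s$ gives $\widetilde I(v)=\max_{t\ge0}\widetilde I(tv)\ge m(V(z_0),K(z_0))$ for the corresponding limit functional $\widetilde I$. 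Finally, exactly as in \eqref{3.9}, Fatou's lemma, Lemma \ref{lem etaq}(v), Corollary \ref{cor2.3} and the nonnegativity of the extra weight give $L=\liminf_n\big(\Gamma_{\varepsilon_n,R_n}(u_n)-\tfrac12\Gamma'_{\varepsilon_n,R_n}(u_n)u_n\big)\ge \widetilde I(v)\ge m(V(z_0),K(z_0))\ge\tfrac12 P_0|U|_2^2$, a contradiction.

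Compared with Theorem \ref{th1.1}, I expect the work to concentrate in two places. First, every estimate must be uniform in $R\ge R_0$, which forces checking that the extra term $[(|x|-R/\varepsilon)^+]^2$ is harmless: trivially on the fixed support of the test path, and, in the concentration step, locally after translation — exactly where $\mathrm{dist}(\mathcal P,\partial B(0,R_0))>0$ (from $\mathcal P\subset\Omega\subset B(0,R_0/2)$) is used. Second, the competing weight $K(\varepsilon x)$ must be carried through every identity of Lemma \ref{lem2.10}, in particular verifying that the variational characterization $\widetilde I(v)\ge m(V(z_0),K(z_0))$ and the no-splitting and half-space boundary estimates survive with $K(z_0)$ in front of the logarithm. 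Once these are in place, the inequality $m(V(z),K(z))\ge\tfrac12 P_0|U|_2^2$ on $\overline\Omega$, which encodes that $\mathcal P$ minimizes $P$, closes the argument.
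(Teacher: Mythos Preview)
Your proposal is correct and follows essentially the same route as the paper, which simply notes that the result is obtained ``by similar arguments to Lemma \ref{lem2.10} and Lemma \ref{lem 3.1}''. You have accurately identified the two additional verifications required in this setting---the uniformity in $R\ge R_0$ of the upper-bound test path (the extra weight $[(|x|-R/\varepsilon)^+]^2$ vanishing on a fixed compact set) and the propagation of the factor $K(\varepsilon x)\to K(z_0)$ through the concentration-compactness step---and handled them correctly.
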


By  Lemma \ref{lem 4.2}, we can obtain
\begin{lemma}\label{lem 4.3}
For any $\delta>0$, there holds
$$\lim_{\varepsilon\to0}\sup_{R\geq R_0}\|u_{\varepsilon,R}\|_{L^\infty(\mathbb R^N\setminus (\mathcal P^\delta)_\varepsilon)}=0.$$
\end{lemma}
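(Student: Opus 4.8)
The plan is to mirror the proof of Lemma \ref{lem 3.2} in Section \ref{sec3}: combine the energy limit of Lemma \ref{lem 4.2} with the competing-potentials analog of the concentration argument in Lemma \ref{lem2.10}, and then upgrade the resulting $L^p$-vanishing to $L^\infty$-vanishing via the uniform sub-solution estimate. I would run this by contradiction, which simultaneously re-derives the needed concentration statement uniformly in $R$. First I reduce to a sequence: for each $\varepsilon\in(0,\varepsilon_0)$ pick $R(\varepsilon)\ge R_0$ realizing the supremum up to $\varepsilon$, and write $w_\varepsilon:=u_{\varepsilon,R(\varepsilon)}$. If the lemma fails, there are $\delta_0>0$, $c_0>0$, $\varepsilon_n\to0$ and points $x_n$ with $\operatorname{dist}(\varepsilon_n x_n,\mathcal P)\ge\delta_0$ and $|w_{\varepsilon_n}(x_n)|\ge c_0$. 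The sub-solution estimate quoted after Proposition \ref{pro4.1} (valid uniformly in $\varepsilon\in(0,\varepsilon_0)$ and $R\ge R_0$, of the form $|w_\varepsilon(x)|\le C(\|w_\varepsilon\|_{L^2(B(x,1))}+\varepsilon)$) then forces $\|w_{\varepsilon_n}\|_{L^2(B(x_n,1))}\ge c_1>0$ for large $n$.

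Next I would translate and identify the limit, just as in Lemma \ref{lem2.10}. Since $\|w_{\varepsilon_n}\|_{\varepsilon_n,R(\varepsilon_n)}\le M_1$ is uniformly bounded by Proposition \ref{pro4.1}, up to a subsequence $v_n:=w_{\varepsilon_n}(\cdot+x_n)\rightharpoonup v$ weakly in $H^1(\mathbb R^N)$ with $v\ge0$ and $\|v\|_{L^2(B(0,1))}\ge c_1$, so $v\neq0$. If $\operatorname{dist}(x_n,\Omega_{\varepsilon_n})\to\infty$ or $|\varepsilon_n x_n|\to\infty$, the penalized nonlinearity is asymptotically active on all of $\mathbb R^N$ and, since $g(s)\le0$ for $s\ge0$, $K>0$, $\widetilde V_\varepsilon\ge1$, and $[(|x|-R/\varepsilon)^+]^2\ge0$, passing to the limit in the Kato-type inequality (the analog of Remark \ref{rek2.9}(i) for \eqref{eq5.2}, using Corollary \ref{cor2.3}) yields $-\Delta v+v\le0$ on $\mathbb R^N$, hence $v\equiv0$, a contradiction. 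Therefore $\varepsilon_n x_n$ is bounded and, along a further subsequence, $\varepsilon_n x_n\to z_0\in\overline\Omega$; since $\operatorname{dist}(\varepsilon_n x_n,\mathcal P)\ge\delta_0$, we have $z_0\in\overline\Omega\setminus\mathcal P$. Because $\Omega\subset B(0,R_0/2)$ and $R(\varepsilon_n)\ge R_0$, the weight $[(|x|-R(\varepsilon_n)/\varepsilon_n)^+]^2$ vanishes near the bubble, so $v$ solves the limit equation $-\Delta v+V(z_0)v=(1-\widetilde\chi)v\log v^2+\widetilde\chi K(z_0)g(v)$, with $\widetilde\chi$ either $0$ (if $z_0\in\Omega$) or the indicator of a half-space $H$ as in \eqref{eq2.23} (if $z_0\in\partial\Omega$). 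In the half-space case I would test against $\nabla v\cdot\vec n(z_0)$ as in Lemma \ref{lem2.10} to obtain $v\le e^{-1}$ on $\partial H$, then apply the maximum principle in $H$ to get $K(z_0)g(v)=K(z_0)v\log v^2$ there; in all cases $v$ is a positive solution of $-\Delta v+V(z_0)v=K(z_0)v\log v^2$, and the associated limit functional satisfies (as in \eqref{3.9}) a value $\ge m(V(z_0),K(z_0))$ at $v$.

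Finally I would close by energy comparison. By Lemma \ref{lem 4.2}, $\Gamma_{\varepsilon_n,R(\varepsilon_n)}(w_{\varepsilon_n})\to\frac12 P_0|U|_2^2$, whereas the Fatou-type inequality of \eqref{3.9} — using $g(s)s-2G(s)\ge0$ from Lemma \ref{lem etaq}(v) and Corollary \ref{cor2.3} — applied along the translates $v_n$ gives
\begin{equation*}
\tfrac12 P_0|U|_2^2\ \ge\ m(V(z_0),K(z_0))\ =\ \tfrac12 P(z_0)|U|_2^2\ \ge\ \tfrac12 P_0|U|_2^2 .
\end{equation*}
Hence $P(z_0)=P_0$, i.e.\ $z_0\in\mathcal P$, contradicting $\operatorname{dist}(\varepsilon_n x_n,\mathcal P)\ge\delta_0$. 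This yields $\lim_{\varepsilon\to0}\sup_{R\ge R_0}\|u_{\varepsilon,R}\|_{L^\infty(\mathbb R^N\setminus(\mathcal P^\delta)_\varepsilon)}=0$; the corresponding $L^p$-vanishing, if needed, follows from the same scheme via P.L. Lions' vanishing lemma.

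The step I expect to be the main obstacle is making the concentration analysis genuinely \emph{uniform in $R\ge R_0$}: one must check that the sub-solution estimate, the bound $\|u_{\varepsilon,R}\|_{\varepsilon,R}\le M_1$, and the smallness of Corollary \ref{cor2.3} all hold with constants independent of $R$, and, in the case $\kappa=1$, that replacing $H_\varepsilon$ by $H_{\varepsilon,R}$ and introducing the extra quadratic weight $[(|x|-R/\varepsilon)^+]^2$ in $T_R$ still supplies the compactness to pass to the weak limit while not interfering with the bubble (which it does not, since the bubble localizes well inside $B(0,R_0\varepsilon^{-1}/2)$). The half-space Pohozaev step needs the decay and $W^{2,p}$-regularity of $v_n$ near the boundary, available exactly as in Lemma \ref{lem2.10}.
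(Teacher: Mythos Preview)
Your proposal is correct and follows the same route the paper (implicitly) takes: the paper asserts Lemma \ref{lem 4.3} as an immediate consequence of Lemma \ref{lem 4.2} via the competing-potentials analogue of the concentration argument in Lemma \ref{lem2.10} together with the uniform sub-solution estimate of Corollary \ref{lem 2.7}, which is exactly the scheme you spell out (packaged as a contradiction argument, with the $R$-uniformity explicitly tracked). One harmless slip: in your limit equation the factor $K(z_0)$ should multiply \emph{both} nonlinear terms, i.e.\ $-\Delta v+V(z_0)v=K(z_0)\big[(1-\widetilde\chi)v\log v^2+\widetilde\chi\, g(v)\big]$.
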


Then we can obtain the decay estimates
for $u_{\varepsilon,R}$.
\begin{proposition}\label{pro 4.4}
For each $\delta >0$, there exists $C,c>0$ such that
$$\sup_{R\geq R_0}|u_{\varepsilon,R}(x)|\leq  C\exp\big\{-c\varepsilon^{-\kappa}({\rm  dist}(x, (\mathcal P^\delta)_\varepsilon))^{2-\kappa}\big\}\quad \text{for}\ \varepsilon\in (0,\varepsilon_0)\ \text{and}\ x\in\mathbb R^N.$$
\end{proposition}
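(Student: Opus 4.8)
The plan is to follow the scheme of Proposition~\ref{pro 3.3}, adapting it to the extra factor $K(\varepsilon x)$ in the nonlinearity, the reweighting $\widetilde V(x)=\max\{V(x),|x|^{2-2\kappa}\}$ for $|x|\ge R_0$, and the new cut-off scale $\phi_\varepsilon(x)=\exp\{-\varepsilon^{1-\kappa}|x|^{2-\kappa}\}$. Write $w_{\varepsilon,R}:=|u_{\varepsilon,R}|$. First I would apply Kato's inequality to \eqref{eq5.2}: since $\widetilde V_\varepsilon\ge1$, $\overline V_\varepsilon\le0$, $\eta_\varepsilon\le0$, the confinement term $\big[(|x|-R\varepsilon^{-1})^+\big]^2w_{\varepsilon,R}$ is nonnegative, and $f_\varepsilon(x,s)\le s\log s^2$ for $s\ge0$, one gets
\[
-\Delta w_{\varepsilon,R}+w_{\varepsilon,R}+\overline V_\varepsilon(x)\widehat\eta_\varepsilon(x,w_{\varepsilon,R})w_{\varepsilon,R}\le K(\varepsilon x)w_{\varepsilon,R}\log w_{\varepsilon,R}^2\qquad\text{in }\mathbb R^N
\]
weakly. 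On $B(0,R_0\varepsilon^{-1})$ one has $\overline V_\varepsilon\equiv0$; on $\mathbb R^N\setminus B(0,R_0\varepsilon^{-1})$, (P2) gives $|\overline V_\varepsilon(x)|\le C|\varepsilon x|^{2-2\kappa}$ while $K(\varepsilon x)\ge\sigma_1|\varepsilon x|^{-\kappa}$, and on $\{w_{\varepsilon,R}\le5\phi_\varepsilon\}$ one has $\log w_{\varepsilon,R}^2\le\log(25\phi_\varepsilon^2)$; matching powers of $\varepsilon$ and $|x|$ exactly as for \eqref{eq3.11} (the favorable balance being $\sigma_1\varepsilon^{1-2\kappa}|x|^{2-2\kappa}$ against $C\varepsilon^{2-2\kappa}|x|^{2-2\kappa}$, which wins once $|x|\ge R_0\varepsilon^{-1}$ and $\varepsilon$ is small) yields $\overline V_\varepsilon(x)\widehat\eta_\varepsilon(x,w_{\varepsilon,R})-\tfrac12K(\varepsilon x)\log w_{\varepsilon,R}^2\ge0$ whenever $w_{\varepsilon,R}(x)\le5\phi_\varepsilon(x)$ and $|x|\ge R_0\varepsilon^{-1}$, while $\widehat\eta_\varepsilon(x,w_{\varepsilon,R})=0$ if $w_{\varepsilon,R}(x)\ge5\phi_\varepsilon(x)$. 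Together with Lemma~\ref{lem 4.3}, which gives $\|w_{\varepsilon,R}\|_{L^\infty(\mathbb R^N\setminus(\mathcal P^{\delta/2})_\varepsilon)}\le e^{-1}$ uniformly in $R$ for small $\varepsilon$, and with $K>0$, $\log w_{\varepsilon,R}^2\le0$, this reduces the problem to the clean differential inequality
\[
-\Delta w_{\varepsilon,R}+w_{\varepsilon,R}\le\tfrac12K(\varepsilon x)w_{\varepsilon,R}\log w_{\varepsilon,R}^2\qquad\text{in }\mathbb R^N\setminus(\mathcal P^{\delta/2})_\varepsilon .
\]

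Next, as in Proposition~\ref{pro 3.3}, cover $\mathcal P^{\delta/2}$ by finitely many balls $B(x_j,\tfrac23\delta)$ with $x_j\in\mathcal P$, so that $\mathcal P^{\delta/2}\subset\mathcal O:=\bigcup_jB(x_j,\tfrac23\delta)\subset\mathcal P^{2\delta/3}$, put $\mathcal O_\varepsilon=\bigcup_jB(\varepsilon^{-1}x_j,\tfrac23\delta\varepsilon^{-1})$, and look for radial barriers $\psi_{j,\varepsilon}(x)=\exp\{-c_\delta\Phi_\varepsilon(|x-\varepsilon^{-1}x_j|)\}$, with $\psi_\varepsilon:=\tfrac1k\sum_j\psi_{j,\varepsilon}$. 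The profile $\Phi_\varepsilon$ has to vanish together with its derivative at $s=\tfrac23\delta\varepsilon^{-1}$ (so $\psi_{j,\varepsilon}=1$ on $\partial B(\varepsilon^{-1}x_j,\tfrac23\delta\varepsilon^{-1})$, hence $\psi_\varepsilon\ge\tfrac1k$ on $\partial\mathcal O_\varepsilon$), to make $e^{-c_\delta\Phi_\varepsilon(|x-\varepsilon^{-1}x_j|)}$ a weak supersolution of $-\Delta\psi+\psi\ge\tfrac12K(\varepsilon x)\psi\log\psi^2$ outside that ball, and to grow so that $\psi_\varepsilon(x)\le Ce^{-c\varepsilon^{-\kappa}(\text{dist}(x,\mathcal O_\varepsilon))^{2-\kappa}}$. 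The lower bound driving the construction is $K(\varepsilon x)\ge c_\delta\varepsilon^{-\kappa}|x|^{-\kappa}$ for $x\in\mathbb R^N\setminus\mathcal O_\varepsilon$: this follows from $K(\varepsilon x)\ge\sigma_1|\varepsilon x|^{-\kappa}$ ($|\varepsilon x|\ge R_0$), $K(\varepsilon x)\ge\sigma_2$ ($|\varepsilon x|\le R_0$), and the fact that $0\in\mathcal P$ forces $|x|\ge\tfrac\delta2\varepsilon^{-1}$ there. Once $\Phi_\varepsilon$ is chosen, $\psi_\varepsilon$ inherits the supersolution property from the convexity of $t\mapsto t\log t^2$ on $(0,\infty)$ and $K>0$.

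With $\psi_\varepsilon$ in hand the comparison is routine: since $w_{\varepsilon,R}\to0$ uniformly outside $(\mathcal P^{\delta/2})_\varepsilon$ while $\psi_\varepsilon\ge\tfrac1k$ on $\partial\mathcal O_\varepsilon$, one has $(w_{\varepsilon,R}-\psi_\varepsilon)^+\in H_0^1(\mathbb R^N\setminus\mathcal O_\varepsilon)$; subtracting the two differential inequalities, testing against $(w_{\varepsilon,R}-\psi_\varepsilon)^+$ and using that $t\mapsto t\log t^2$ is decreasing on $(0,e^{-1})$ (with $w_{\varepsilon,R},\psi_\varepsilon<e^{-1}$ on $\{w_{\varepsilon,R}>\psi_\varepsilon\}$) forces $\|(w_{\varepsilon,R}-\psi_\varepsilon)^+\|^2_{H^1(\mathbb R^N\setminus\mathcal O_\varepsilon)}\le0$, hence $w_{\varepsilon,R}\le\psi_\varepsilon$ on $\mathbb R^N\setminus\mathcal O_\varepsilon$; since $\text{dist}(x,(\mathcal P^\delta)_\varepsilon)\le\text{dist}(x,\mathcal O_\varepsilon)$ for $x\notin(\mathcal P^\delta)_\varepsilon$, this gives the estimate for $\varepsilon$ small. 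The remaining $\varepsilon\in[\varepsilon_\delta,\varepsilon_0)$ are handled exactly as at the end of Proposition~\ref{pro 3.3}, combining a crude barrier on $\mathbb R^N\setminus B(0,2R_0\varepsilon^{-1})$ with the uniform bound \eqref{eq5.3} and absorbing the bounded factor $e^{cR_0^{2-\kappa}\varepsilon_\delta^{-2}}$ into a constant $C=C(M,\delta)$. Uniformity in $R\ge R_0$ is automatic, since Proposition~\ref{pro4.1}, \eqref{eq5.3} and Lemma~\ref{lem 4.3} are all uniform in $R$ and the confinement term only ever enters with a helpful sign.

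The main obstacle is the construction and verification of $\Phi_\varepsilon$. Unlike the Gaussian barrier of Proposition~\ref{pro 3.3} --- which works because $\widetilde V_\varepsilon+1$ acts as a uniformly positive mass --- here $K$ is only bounded below by a constant on the region where $|\varepsilon x|$ is bounded and decays like $|x|^{-\kappa}$ at infinity, so $\Phi_\varepsilon$ must interpolate between a steep, $\varepsilon^{-1}$-scaled profile near $\partial\mathcal O_\varepsilon$ and the $(2-\kappa)$-power profile $\sim\varepsilon^{-\kappa}|x|^{2-\kappa}$ far out. Checking that a single $\Phi_\varepsilon$ can be taken simultaneously $C^1$, vanishing to second order at $s=\tfrac23\delta\varepsilon^{-1}$, a supersolution across the transition region (where one must control $(\Phi_\varepsilon')^2$ by $1+K(\varepsilon x)\Phi_\varepsilon+\Phi_\varepsilon''+\tfrac{N-1}{s}\Phi_\varepsilon'$), and no larger than the target exponential --- this is precisely where the vanishing of $K$ forces the extra care noted in the Introduction, and I expect it to be the technical heart of the proof.
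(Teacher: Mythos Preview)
Your plan is the paper's proof: Kato's inequality on \eqref{eq5.2}, disposal of $\overline V_\varepsilon\widehat\eta_\varepsilon$ via the new $\phi_\varepsilon$, the unified lower bound $K(\varepsilon x)\ge c_\delta\varepsilon^{-\kappa}|x|^{-\kappa}$ outside $(\mathcal P^{\delta/2})_\varepsilon$ (using $0\in\mathcal P$), comparison over a finite cover as in Proposition~\ref{pro 3.3}, and the $[\varepsilon_\delta,\varepsilon_0)$ endgame via \eqref{eq5.3}. The paper is terser still, recording the reduced inequality and the final bound and deferring the mechanics entirely to Proposition~\ref{pro 3.3}.

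Your worry that $\Phi_\varepsilon$ must interpolate between a near-boundary and a far-field profile is overcomplicated. For $\kappa<1$ the single choice $\psi_{j,\varepsilon}=\exp\big\{-c\,\varepsilon^{-\kappa}\big(|x-\varepsilon^{-1}x_j|-\tfrac23\delta\varepsilon^{-1}\big)_+^{2-\kappa}\big\}$ is already a supersolution throughout: in the verification, the curvature contribution $c(2{-}\kappa)(1{-}\kappa)\varepsilon^{-\kappa}s^{-\kappa}$ from the radial Laplacian dominates for small $s$, while for large $s$ the bound $|x|\le s+O(\varepsilon^{-1})$ makes $|x|^{-\kappa}s^{2-\kappa}$ comparable to $s^{2-2\kappa}$, so $c\le\sigma_3/(2{-}\kappa)^2$ suffices --- this is exactly the constant the paper writes. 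Alternatively, and this covers the borderline $\kappa=1$ as well (where the $s^{-\kappa}$ help vanishes), one can simply reuse the plain Gaussian barrier of Proposition~\ref{pro 3.3} on the bounded annulus $B(0,2R_0\varepsilon^{-1})\setminus\mathcal O_\varepsilon$, where the coefficient $\tfrac12\sigma_3\varepsilon^{-\kappa}|x|^{-\kappa}$ lies between fixed positive constants; this yields $w_{\varepsilon,R}\le e^{-c_1 d^2}$ with $d=\text{dist}(x,\mathcal O_\varepsilon)$, and since $d\ge\tfrac\delta3\varepsilon^{-1}$ whenever $x\notin(\mathcal P^\delta)_\varepsilon$ one has $c_1d^2\ge c_1(\delta/3)^\kappa\varepsilon^{-\kappa}d^{2-\kappa}$, which is the target exponent. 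The far-field estimate the paper first derives from \eqref{5.5} then handles $|x|\ge2R_0\varepsilon^{-1}$. Either way, no two-regime gluing is needed; the barrier step is more routine than you anticipate.
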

\begin{proof}
By \eqref{eq5.2}, for $\varepsilon>0$ small, similarly to Proposition \ref{pro 3.3} and by Lemma \ref{lem 4.3},
$w_{\varepsilon,R}:=|u_{\varepsilon,R}|$ satisfies
\begin{align}\label{eq5.4}
-\Delta w_{\varepsilon,R}+w_{\varepsilon,R}&\leq \sigma_2 w_{\varepsilon,R}\log w_{\varepsilon,R}^2,\quad &x\in&  B(0, R_0\varepsilon^{-1})\setminus (\mathcal P^{\delta/2})_\varepsilon,\\
-\Delta w_{\varepsilon,R}+w_{\varepsilon,R}&\leq \frac12\sigma_1\varepsilon^{-\kappa}|x|^{-\kappa} w_{\varepsilon,R}\log w_{\varepsilon,R}^2,\quad &x\in& \mathbb R^N\setminus B(0, R_0\varepsilon^{-1}).\label{5.5}
\end{align}
It follows from the comparison argument and \eqref{5.5} that
$$w_{\varepsilon,R}(x)\leq
\exp\Big\{-\frac{\sigma_1\varepsilon^{-\kappa}(\text { dist}(x, (\mathcal P^\delta)_\varepsilon))^{2-\kappa}}{2^{2-\kappa}(2-\kappa)^2}\Big\}\quad \text{for}\quad x\in \mathbb R^N\setminus B(0,2R_0\varepsilon^{-1}).
$$
On the other hand, for $x\in  B(0,2R_0\varepsilon^{-1})\setminus (\mathcal P^{\delta/2})_\varepsilon$,
$\sigma_2=\sigma_2(\varepsilon|x|)^\kappa |\varepsilon x|^{-\kappa}\geq \sigma_2(\delta/2)^\kappa |\varepsilon x|^{-\kappa}$.
Therefore, by \eqref{eq5.4} and \eqref{5.5}, we have
$$-\Delta w_{\varepsilon,R}+w_{\varepsilon,R}\leq \frac12 \sigma_3\varepsilon^{-\kappa}|x|^{-\kappa} w_{\varepsilon,R}\log w_{\varepsilon,R}^2,
\quad x\in  \mathbb R^N\setminus (\mathcal P^{\delta/2})_\varepsilon.
$$
with $\sigma_3:=\min\{2^{1-\kappa}\sigma_2\delta^\kappa,\sigma_1\}$.
Then there is $\varepsilon_\delta>0$ such that for
$\varepsilon\in (0,\varepsilon_\delta)$,
$$w_{\varepsilon,R}(x)\leq
\exp\Big\{-\frac{\sigma_3\varepsilon^{-\kappa}(\text { dist}(x, (\mathcal P^\delta)_\varepsilon))^{2-\kappa}}{(2-\kappa)^2}\Big\}\quad \text{for}\quad x\in  \mathbb R^N\setminus (\mathcal P^\delta)_\varepsilon.
$$
Then by \eqref{eq5.3}, the conclusion of the proposition holds in a similar way to Proposition \ref{pro 3.3}.
\end{proof}
Now we can complete the proof of Theorem \ref{th2}.
\begin{proof}[Proof of Theorem \ref{th2}]
By Proposition \ref{pro 4.4},
there are $C,c>0$ independent of $\varepsilon\in (0,\varepsilon_0)$ and $R\geq R_0$ such that
\begin{equation}\label{5.6}
|u_{\varepsilon,R}(x)|\leq  Ce^{-c\varepsilon^{-\kappa}|x|^{2-\kappa}}\quad \text{for}\quad x\in\mathbb R^N\setminus B(0,\Omega_\varepsilon),
\end{equation}
which implies
$$\eta_\varepsilon(x,|u_{\varepsilon,R}|)|u_{\varepsilon,R}|=0,\quad \widehat\eta_\varepsilon(x,|u_{\varepsilon,R}|)=1,\quad
f_\varepsilon(x, u_{\varepsilon,R})=u_{\varepsilon,R}\log u^2_{\varepsilon,R}.$$
Therefore, by \eqref{eq5.2}, $u_{\varepsilon,R}$ is a weak solution to
\begin{equation}\label{5.7}
	-\Delta u+V(\varepsilon x)u+\Big[\big(|x|-\frac R\varepsilon\big)^+\Big]^2u
=K(\varepsilon x)u\log u^2.
\end{equation}
On the other hand, by Proposition \ref{pro4.1},$\|u_{\varepsilon,R}\|_{\varepsilon}\leq \|u_{\varepsilon,R}\|_{\varepsilon,R}\leq M_1$. Therefore, up to a subsequence, as $R\to\infty$,
$u_{\varepsilon,R}\rightharpoonup u_\varepsilon$ in $H_\varepsilon$, for some
$u_\varepsilon\in H_\varepsilon$. By
\eqref{5.7} and Fatou's lemma, $u_\varepsilon\in \mathcal D$ and
it is a weak solution to \eqref{eq5.1}.
By \eqref{5.6},
$$
\lim_{R\to\infty}\int_{\mathbb R^N}\Big[\big(|x|-\frac R\varepsilon\big)^+\Big]^2u_{\varepsilon,R}^2=0.
$$
Then, by compact embedding from $H_\varepsilon$ to $L^p(B(0,R/\varepsilon))$, we can conclude $u_{\varepsilon,R}\to u_{\varepsilon}$ strongly in $H_\varepsilon$.
Note by
$$M_0\leq \lim_{R\to\infty}\Gamma_{\varepsilon,R}(u_{\varepsilon,R})=
\Gamma_{\varepsilon}(u_{\varepsilon})=
\frac12\int_{\mathbb R^N}K(\varepsilon x)u_\varepsilon^2
$$
that $u_\varepsilon\not \equiv 0$.
Then we have proved the existence of a solution to \eqref{1.4} and (i) of Theorem \ref{th2}. Property (ii) holds similarly
to Theorem \ref{th1.1}.
 \end{proof}

\appendix
\renewcommand{\theequation}{\thesection.\arabic{equation}}
\section{Some extensions}
\subsection{Singular potential}
 In this section, we consider the logarithmic equation
 \eqref{1.1} with potential function $V$ possessing a finite number
of singularities of at most logarithmic strength.
For equation \eqref{1.1}, assume
\begin{enumerate}
\item[(L1)] There exist $\ell(\in \mathbb N\setminus \{0\})$ distinct points
$\{z_j\}_{j=1}^\ell\subset \mathbb{R}^N$ such that $V\in C(\mathbb R^N\setminus \{z_j\}_{j=1}^\ell,\mathbb R)$  and for each $j=1,\cdots,\ell$,
\[
-\infty=\liminf_{x\to z_j} V(x)\leq \limsup_{x\to z_j}V(x)<\infty,\quad
\alpha_j:=
\limsup_{|x-z_j|\to 0} \frac{V(x)}{\log |x-z_j|^2}\in [0,\infty).
\]
In addition, there exists a $j_0 \in \{ 1, \ldots, \ell \}$ such that
	\[ \liminf_{x\to z_{j_0}} \frac{V(x)}{\log|x-z_{j_0}|^2} >0.
	\]
\end{enumerate}

We remark that in \cite{ITWZ}, Schr\"odinger equations with a more general type of nonlinearities
including the logarithmic one are investigated.
The potential function therein is assumed to satisfy (L1) but
possessing a lower bound at infinity and positive
 localized standing wave solutions are proved to exist
 concentrating at singular point $z_{j_0}$. As a generalization
 of this result to Schr\"odinger equations
 with potentials unbound below at infinity,
 we study the logarithmic equations \eqref{1.1} under the assumptions (V0) and (L1). We note that a typical example of potential function satisfying
 these assumptions is $\sum_{j=1}^\ell \alpha_j\log|x-z_j|^2-|x|^2$.
To describe the existence and asymptotic behaviors of solutions, we give the following theorem.
 \begin{theorem}
 Assume (V0) and (L1). Then there exists  $\varepsilon_0>0$
 such that for $\varepsilon\in (0,\varepsilon_0)$, and $j=1,\cdots,\ell$,
 \eqref{1.1} admits a positive solution $v_{\varepsilon}$ satisfying
 \begin{enumerate}
 \item[(i)]
 For any $\delta\in(0,1)$, there exist  $C>0$ such that
$$|v_{\varepsilon}(x)|\leq Ce^{-c\varepsilon^{-2}|x-z_{j_0}|^2}\ \ \text{for}\ \ |x-z_{j_0}|\geq \delta.$$
\item[(ii)] $\lim_{\varepsilon\to 0}\|v_{\varepsilon}\|_{L^\infty(\mathbb R^N)}=0$
and
$\lim_{\varepsilon\to 0}\varepsilon^{-\theta}\|v_{\varepsilon}\|_{L^\infty(\mathbb R^N)}=\infty$ for each $\theta>\alpha_{j_0}$.
\item[(iii)]
If we assume further that
\begin{enumerate}
\item[(L2)] There exists
$A_{j_0} \in \mathbb R$ such that
   \[
	   \lim_{x\to z_{j_0}}(V(x)-\alpha_{j_0}\log |x-z_{j_0}|^2)=A_{j_0},
   \]

\end{enumerate}
then for each sequence
	$\varepsilon_k \to 0$ there exists a subsequence (still denoted by $\varepsilon_k$ ) such that
		\[
			\varepsilon_k^{- \alpha_{j_0}   }v_{\varepsilon_k}(\varepsilon_k x+z_{j_0}) \to v_{j_0} \quad \text{as}\ k\to\infty
			\ \text{strongly in $H^1(\mathbb R^N)$}
		\]
	where $v_{j_0}$ is a positive ground state solution to
	\begin{equation}
		-\Delta v+\alpha_{j_0} \left(\log|x|^2 \right)v+A_{j_0}v= v\log v^2.
		\end{equation}
 \end{enumerate}

 \end{theorem}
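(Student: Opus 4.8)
The plan is to reduce, by an initial $\varepsilon$-dependent rescaling, to a problem of the type treated in Sections \ref{sec2}--\ref{sec3}, and then graft on the handling of a (now integrable) logarithmic singularity of the potential as in \cite{ITWZ}. Concretely, substituting $v=\varepsilon^{\alpha_{j_0}}\tilde v$ turns \eqref{1.1} into
\[
-\varepsilon^2\Delta\tilde v+\big(V(x)-2\alpha_{j_0}\log\varepsilon\big)\tilde v=\tilde v\log\tilde v^2,
\]
and (L1) ensures that, for $\varepsilon$ small, the effective potential $V(x)-2\alpha_{j_0}\log\varepsilon$ is bounded below outside a fixed small ball $B(z_{j_0},r_0)$, while on that ball it retains a logarithmic singularity that is locally integrable (so the quadratic form makes sense); moreover, writing $x=z_{j_0}+\varepsilon y$ it behaves like $\alpha_{j_0}\log|y|^2+A_{j_0}+o_\varepsilon(1)$ under (L2) (under (L1) alone, with a one-sided $\limsup$ in place of the $o_\varepsilon(1)$). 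I would therefore run the modified-functional construction of Section \ref{sec2} for this equation: truncate the effective potential outside $B(0,R_0)$ as in \eqref{tildev}, penalize the nonlinearity outside $\Omega_\varepsilon$ with $\Omega=B(z_{j_0},r_0)$, keep $\Psi_\varepsilon$, and — following \cite{ITWZ} — use that near $z_{j_0}$ the combination of the singular part of the potential with $-\log\tilde u^2$ stays bounded below once $\tilde u$ is pointwise small, which is what preserves the a priori bounds of Lemma \ref{lem 2.3} and the mountain-pass geometry of Lemma \ref{lem 2.5}. Proposition \ref{pro 2.6} then yields, for each small $\varepsilon$, a positive critical point $\tilde u_\varepsilon$ of the resulting $\Gamma_\varepsilon$ at a min-max level $d_\varepsilon\in[M_0,M_1]$ as in \eqref{eq2.12}.

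Next comes the energy estimate: using as comparison path a cut-off copy of the ground state of the limit problem $-\Delta w+\alpha_{j_0}\log|y|^2\,w+A_{j_0}w=w\log w^2$ placed at $z_{j_0}/\varepsilon$, together with the behaviour of the effective potential recorded above, one obtains $\limsup_{\varepsilon\to0}d_\varepsilon\le m_{j_0}$, where $m_{j_0}$ is the ground-state level of the functional attached to that limit equation; a matching lower bound $\liminf_{\varepsilon\to0}d_\varepsilon\ge m_{j_0}$ comes from the analogue of Lemma \ref{lem2.10}, which also shows that any concentration point (in the rescaled variables) other than $z_{j_0}/\varepsilon$ would make the effective potential large and hence force strictly larger energy, so that all the $L^p$-mass of $\tilde u_\varepsilon$ collapses onto $z_{j_0}/\varepsilon$. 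In particular $\|\tilde u_\varepsilon\|_{L^\infty(\mathbb R^N\setminus(B(z_{j_0},\delta))_\varepsilon)}\to0$ for every $\delta>0$, the analogue of Lemma \ref{lem 3.2}.

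With localization in hand I would reprove the Gaussian decay of Proposition \ref{pro 3.3} essentially verbatim (away from $z_{j_0}/\varepsilon$ the effective potential is very positive and only helps), obtaining $|\tilde u_\varepsilon(x)|\le C\exp\{-c(\mathrm{dist}(x,(B(z_{j_0},\delta))_\varepsilon))^2\}$; undoing $\tilde u_\varepsilon(x)=\tilde v_\varepsilon(\varepsilon x)$ and $v_\varepsilon=\varepsilon^{\alpha_{j_0}}\tilde v_\varepsilon$ gives (i). The decay again makes all penalization terms vanish for small $\varepsilon$, so $\tilde u_\varepsilon$ solves the rescaled equation and $v_\varepsilon$ solves \eqref{1.1}. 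Part (ii) follows from uniform $L^\infty$- and $H^1$-bounds for $\tilde u_\varepsilon$ (Corollary \ref{lem 2.7}, together with the fact that the negative part of the weight $\alpha_{j_0}\log|y|^2$ near $z_{j_0}/\varepsilon$ is absorbed by the largeness of $-\log\tilde u_\varepsilon^2$ there) and the non-vanishing of its $L^p$-mass: these give $0<c\le\|\tilde u_\varepsilon\|_{L^\infty}\le C$, i.e. $c\,\varepsilon^{\alpha_{j_0}}\le\|v_\varepsilon\|_{L^\infty}\le C\,\varepsilon^{\alpha_{j_0}}$, which contains both statements of (ii). For (iii), under (L2) set $w_\varepsilon(x):=\tilde u_\varepsilon(x+z_{j_0}/\varepsilon)=\varepsilon^{-\alpha_{j_0}}v_\varepsilon(\varepsilon x+z_{j_0})$; then $w_\varepsilon$ solves
\[
-\Delta w_\varepsilon+\big(\alpha_{j_0}\log|x|^2+A_{j_0}+o_\varepsilon(1)\big)w_\varepsilon=w_\varepsilon\log w_\varepsilon^2 ,
\]
and along a subsequence $w_\varepsilon\rightharpoonup v_{j_0}$ weakly in $H^1$ and strongly in $L^p$ ($2<p<2^*$), with $v_{j_0}$ solving the limit equation; the energy identity combined with $d_\varepsilon\to m_{j_0}$ upgrades this to strong $H^1$-convergence and identifies $v_{j_0}$ as a positive ground state (positivity by the maximum principle, as in Proposition \ref{pro 2.6}).

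The main obstacle is the logarithmic singularity of the effective potential at $z_{j_0}/\varepsilon$: although integrable, it is sign-indefinite and unbounded below there, so coercivity of the various functionals (for the a priori bounds, the mountain-pass geometry, and the compactness in (iii)) can only be salvaged by exploiting the pointwise smallness of the very solution being constructed. Hence, exactly as in \cite{ITWZ}, the uniform $L^\infty$ bound, the localization of Lemma \ref{lem 3.2} and the decay of Proposition \ref{pro 3.3} cannot be established one after another but must be bootstrapped together; and in (iii) one additionally needs compactness of the rescaled sequence in a function space whose weight $\alpha_{j_0}\log|x|^2$ is unbounded both near the origin and at infinity.
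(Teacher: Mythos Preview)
Your rescaling $v=\varepsilon^{\alpha_{j_0}}\tilde v$ is a natural idea, but the claim that the effective potential $V(x)-2\alpha_{j_0}\log\varepsilon$ is bounded below outside a fixed ball $B(z_{j_0},r_0)$ is false: by (L1) there are $\ell-1$ other singular points $z_j$, $j\neq j_0$, at each of which $V\to-\infty$, and the additive shift $-2\alpha_{j_0}\log\varepsilon$ does nothing to remove them. Near $z_j$ one only has $V(x)\ge (\alpha_j+o(1))\log|x-z_j|^2$, so the effective potential still dives to $-\infty$ there for every fixed $\varepsilon$. Consequently the framework of Section~\ref{sec2} (which needs $V\ge1$ on the fixed ball $B(0,R_0)$ containing $\Omega$) does not apply as stated, the sentence ``away from $z_{j_0}/\varepsilon$ the effective potential is very positive and only helps'' is incorrect, and your energy argument ruling out concentration elsewhere breaks down precisely at the competing wells $z_j$: if some $\alpha_j>\alpha_{j_0}$ the effective potential is \emph{lower} there, so a plain mountain-pass solution may well prefer $z_j$.

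The paper proceeds differently. It does \emph{not} rescale the unknown; it works with $u(x)=v(\varepsilon x)$ directly and looks for a critical point at a small energy level of order $\varepsilon^{2\theta}$ with $\theta\in(\alpha_{j_0},2\alpha)$, $\alpha=\max_j\alpha_j$. To force concentration at $z_{j_0}$ despite the other singular wells it adds, on top of $\Psi_\varepsilon$, a Byeon--Wang type outer penalty $Q_\varepsilon(u)=W_\varepsilon\big(\varepsilon^{-6\alpha}\int\chi_\varepsilon u^2\big)$ (with $\chi_\varepsilon$ the indicator of the complement of a small ball around $z_{j_0}$), which keeps the $L^2$-mass outside that ball at scale $\varepsilon^{4\alpha}$. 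The most delicate step, entirely absent from your outline, is then to show that the resulting critical point is exponentially small near each $z_j$, $j\neq j_0$: this uses a comparison with the explicit supersolution $\psi_\varepsilon(x)=h(\alpha^{1/2}|x-z_j/\varepsilon|)$, $h(s)=s^2\log s^2+1$, for the operator $-\Delta+2\alpha\log|x-z_j/\varepsilon|^2-\alpha\log\varepsilon^2$, together with the positivity of that operator on $H^1_0$ of a small ball for $\varepsilon$ small. Only after this has ruled out mass near the other $z_j$ does the Gaussian decay of Proposition~\ref{pro 3.3} go through. If you want to keep your rescaling viewpoint, you must at minimum incorporate an analogous penalty and an analogous comparison near each $z_j$, $j\neq j_0$; without that, the argument has a genuine gap.
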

\begin{proof}
We only sketch the proof for existence of a solution and its decay estimate (i).
Without loss of generality we assume
$j_0=1$, $z_{j_0}=z_1=0$ and $\alpha_{j_0}=\alpha_1$.
Fix $R_0>2\max\{|z_j|\}_{j=1}^\ell$.
Let $\widetilde V$ and $\Psi_\varepsilon$ be defined as in \eqref{tildev} and \eqref{Psi}.
Let $\alpha=\max\{\alpha_j\}_{j=1}^\ell$  and take any
$\theta\in(\alpha_1,2\alpha)$.

Then we can choose $\tau\in (0,\frac14\min_{i\neq j}\{|z_i-z_j|\})$ (set $\min_{i\neq j}\{|z_i-z_j|\}=\infty$ if $\ell=1$)
such that
\begin{equation}\label{6.2}
\begin{aligned}
\theta\log|x|^2\leq& V(x)\leq \beta\log|x|^2\quad \text{for any}\ x\in B(0,\tau),\\
2\alpha\log|x-z_j|^2\leq& V(x)\quad \text{for any}\ x\in B(z_j,\tau)\ \text{and}\ j=2,\cdots,\ell,
\end{aligned}
\end{equation}
where $\beta>0$ is a fixed constant such that $\beta<\liminf_{|x-z_1|\to 0} \frac{V(x)}{\log |x-z_1|^2}$.
Setting $u(x)=v(\varepsilon x)$, we solve the equation
\begin{equation*}
-\Delta u+V(\varepsilon x)u=u\log u^2 \  \ \text { in}\ \ \mathbb R^N.
\end{equation*}
Redefine $\chi_\varepsilon$ as the characteristic function of $\mathbb R^N\setminus B(0,\tau)$.
Following the idea of \cite{BW-1}, we introduce another penalization term.
Fix a function $W\in C^1(\mathbb R,\mathbb R)$ such that
\begin{equation*}
W'(s)\in[0,1] \quad \text{and}\quad
W(s)=
\begin{cases}
0,\ &s\leq \frac{1}{2},\\
s-1, \ &s\geq \frac{3}{2}.
\end{cases}
\end{equation*}
Set
\begin{equation}\label{w}
W_\varepsilon(s):=\varepsilon^{2\alpha} W(\varepsilon^{-2\alpha}s)
\end{equation}
and $Q_\varepsilon(u):=W_\varepsilon\big(\int_{\mathbb R^N}\varepsilon^{-6\alpha}\chi_\varepsilon(x) u^2\big)$. We have
$|W_\varepsilon(s)-W_\varepsilon'(s)s|\leq \frac32\varepsilon^{2\alpha}$ and hence $|Q_\varepsilon(u)-\frac{1}{2}Q_\varepsilon'(u)u|\leq \frac{3}{2}\varepsilon^{2\alpha}$  for all $u\in H_\varepsilon$
Redefine $\Gamma_\varepsilon :H_\varepsilon\to\mathbb R$ as
\begin{equation*}
\Gamma_{\varepsilon}(u)
=\frac{1}{2}\int_{\mathbb R^N}(|\nabla u|^2+\widetilde V_\varepsilon u^2)+\Psi_\varepsilon(u)+Q_\varepsilon(u)
-\frac{1}{2}\int_{\mathbb R^N}(u^2\log u^2-u^2)dx,
\end{equation*}
where
 $H_\varepsilon$ is redefined as the Hilbert space
 $$H_\varepsilon:=\Set{u\in H^1(\mathbb R^N) | \int_{\mathbb R^N}\widetilde V_\varepsilon(x)^+u^2dx<\infty},$$
with inner product $(u,v)_\varepsilon:=\int_{\mathbb R^N}\nabla u \nabla v+(1+\widetilde V_\varepsilon (x)^+)uv$ and norm $\|u\|_\varepsilon:=\sqrt{(u,u)_\varepsilon}$.
Through a similar argument to Lemma \ref{lem 2.5}, we can get for small $\varepsilon$,
\begin{align*}
\inf_{\|u\|_{\varepsilon}=r_0\varepsilon^\theta}\Gamma_\varepsilon(u)\geq M_0\varepsilon^{2\theta},&\quad
\inf_{\|u\|_{\varepsilon}\leq r_0\varepsilon^\theta}\Gamma_\varepsilon(u)>-1,\\
\sup_{t\geq0}\Gamma_\varepsilon(t\omega)\leq M_1\varepsilon^{2\beta},&
\quad \quad \sup_{t\geq t_0}\Gamma_\varepsilon(t\omega)<-2,
\end{align*}
where  $r_0, M_0, M_1, t_0$ are positive constants independent of $\varepsilon$ and $\omega\in C_0^\infty(\mathbb R^N)\setminus\{0\}$ is a fixed function.
Therefore $\Gamma_\varepsilon$ admits a mountain pass geometry. By the compactness of (PS) sequence, a critical point $u_\varepsilon$ to
$\Gamma_\varepsilon$ exists such that
$\varepsilon^{-\beta}\|u_\varepsilon\|_\varepsilon$  and $Q_\varepsilon(u_\varepsilon)$ are bounded
for small $\varepsilon>0$. We refer \cite{ITWZ} for details about these estimates.
Since for any $\delta \in (0,\tau/2)$, $\widetilde V_\varepsilon$ is bounded from below in
$\mathbb R^N\setminus \big(B(0, \frac12\delta\varepsilon^{-1})\bigcup_{j=2}^\ell B(z_j\varepsilon^{-1},\frac12\delta\varepsilon^{-1})\big)$, similar to Proposition \ref{pro 3.3},
we have
$$\lim_{\varepsilon\to 0}||u_\varepsilon||_{L^\infty (\mathbb R^N\setminus (B(0, \delta\varepsilon^{-1})\bigcup_{j=2}^\ell B(z_j\varepsilon^{-1},\delta\varepsilon^{-1})))}=0.
$$

Therefore, we can deduce that  there are $C,c>0$,
\begin{equation}\label{6.4}
|u_\varepsilon(x)|
\leq C\sum_{j=1}^\ell e^{- c|x-z_j\varepsilon^{-1}|^2}, \quad \text{for}\ x\in \mathbb R^N\setminus \Big(B(0, \delta\varepsilon^{-1})\bigcup_{j=2}^\ell B(z_j\varepsilon^{-1},\delta\varepsilon^{-1})\Big).
\end{equation}
On the other hand,
$\varepsilon^{-8\alpha}\int_{\mathbb R^N\setminus B(0, \varepsilon^{-1}\tau)}u_\varepsilon^2$ is bounded by the boundedness
of $Q_\varepsilon(u_\varepsilon)$.
For $x\in B(z_j\varepsilon^{-1},\tau\varepsilon^{-1})$, $j=2,\cdots,\ell$,
since $\widetilde V_\varepsilon=V_\varepsilon$ and $\overline V_\varepsilon=0$, we have
\begin{equation}\label{6.5}
	-\Delta(\varepsilon^{-3\alpha}u_\varepsilon) +(V_\varepsilon-3\alpha\log \varepsilon^2-\log (\varepsilon^{-3\alpha}u_\varepsilon)^2)(\varepsilon^{-3\alpha}u_\varepsilon)=0.
\end{equation}
By \eqref{6.2},
$[V_\varepsilon(x)-3\alpha\log \varepsilon^2-\log (\varepsilon^{-3\alpha}u_\varepsilon)^2]^-\leq 2\alpha(\log|x-z_j\varepsilon^{-1}|^2)^-+[\log (\varepsilon^{-3\alpha}u_\varepsilon)^2]^+$ for $x\in B(z_j\varepsilon^{-1},\tau\varepsilon^{-1})$.
Therefore, by the sub-solution  estimate in \cite{S},
$$\lim_{\varepsilon\to0}\|\varepsilon^{-3\alpha}u_\varepsilon\|_{L^\infty(\bigcup_{j=2}^\ell B(z_j\varepsilon^{-1},\frac12\tau\varepsilon^{-1}))}
=0.
$$
Then, by \eqref{6.2} and \eqref{6.5}
\begin{equation}\label{6.6}
-\Delta|\varepsilon^{-3\alpha}u_\varepsilon| +(2\alpha\log|x-z_j\varepsilon^{-1}|^2-\alpha \log \varepsilon^2)|\varepsilon^{-3\alpha}u_\varepsilon|\leq 0
.
\end{equation}
Next we consider
$$h(s)=\begin{cases}
 s^2\log s^2+1\quad &\text{if}\quad s\in[0,e^{-\frac12}],\cr
1-e^{-1}\quad &\text{if}\quad s\in(e^{-\frac12},\infty).
\end{cases}
$$
Note that $\psi_\varepsilon(x)= h(\alpha^{\frac12}|x-z_j\varepsilon^{-1}|)\geq 1-e^{-1}>0$ is a weak $H^1_{loc}(\mathbb R^N)$
 solution to
\begin{equation}\label{6.7}
	-\Delta\psi_\varepsilon +(2\alpha\log|x-z_j\varepsilon^{-1}|^2-\alpha \log \varepsilon^2)\psi_\varepsilon\geq 0.
\end{equation}
As a result of \eqref{6.6} and \eqref{6.7}, in $B(z_j\varepsilon^{-1},\frac12\tau\varepsilon^{-1})$,   we have
\begin{equation*}
	-\Delta w_\varepsilon +
(2\alpha\log|x-z_j\varepsilon^{-1}|^2-\alpha \log \varepsilon^2)w_\varepsilon\leq 0,
\end{equation*}
where $w_\varepsilon:=|\varepsilon^{-3\alpha}u_\varepsilon|-\varepsilon^{-4\alpha}e^{-c\varepsilon^{-2}}\psi_\varepsilon$.
By \eqref{6.4}, for $\varepsilon$ small,  $w_\varepsilon^+\in H_0^1(B(z_j\varepsilon^{-1},\frac12\tau\varepsilon^{-1}))$.
Noting that for small $\varepsilon$, the operator $-\Delta+
(2\alpha\log|x-z_j\varepsilon^{-1}|^2-\alpha \log \varepsilon^2)$ is positively definite on $H_0^1(B(z_j\varepsilon^{-1},\frac12\tau\varepsilon^{-1}))$,
we have
$w_\varepsilon^+=0$ and thus
$$\|u_\varepsilon\|_{L^\infty(B(z_j\varepsilon^{-1},\frac12\tau\varepsilon^{-1}))}\leq \varepsilon^{-\alpha}e^{-c\varepsilon^{-2}}.
$$
Together with \eqref{6.4}, we have
\begin{equation*}
|u_\varepsilon(x)|
\leq Ce^{- c|x|^2}, \quad \text{for}\ x\in \mathbb R^N\setminus B(0, \delta\varepsilon^{-1}).
\end{equation*}
As a result, $u_\varepsilon$ is the solution to the original problem.
\end{proof}

\subsection{Multiple solutions}
We consider the existence of multiple solutions for \eqref{1.1} and assume that
\begin{enumerate}
\item[(V4)] $V\in C^1(\mathbb R^N,\mathbb R)$  and there is a bounded domain
  $\Omega\subset {\mathbb{R}}^{N}$ with smooth boundary  such that
   \begin{equation*}
 \nabla V(x)\cdot \vec{n}(x)>0,\ \ x\in\partial\Omega,
 \end{equation*}
 where $\vec n (x)$ denotes the outward unit normal vector to $\partial \Omega$ at $x$.
\end{enumerate}
 \begin{theorem}
Let (V0) and (V4) hold. Then for any positive integer $k$, there exists $\varepsilon_k>0$ such that for all $\varepsilon\in(0,\varepsilon_k)$, equation \eqref{1.1} has $k$ pairs of nontrivial solution $\pm v_{\varepsilon,i}$, $i=1,2,...,k$. In addition,
  for each $\delta\in(0,1)$, $i=1,2,...,k,$ there is $C=C(\delta, i),c>0$ such that
$$|v_{\varepsilon,i}(x)|\leq Ce^{-c \varepsilon^{-2}(\text{dist}(x,\mathcal V^{\delta}))^{2}}.$$
 \end{theorem}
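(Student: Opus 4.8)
The plan is to exploit the $\mathbb Z_2$-symmetry of the penalized functional $\Gamma_\varepsilon$ from Section~\ref{sec2} (built over the domain $\Omega$ appearing in (V4)) through a symmetric minimax, and then to upgrade the resulting critical points to genuine solutions of \eqref{1.1} by the concentration-plus-decay scheme already used for Theorems~\ref{th1.1} and~\ref{th1.2}. First observe that by (V4) the field $-\nabla V$ points into $\Omega$ along $\partial\Omega$, so $V_0:=\min_{\overline\Omega}V$ is attained in the interior and $\mathcal V:=\{x\in\Omega:V(x)=V_0\}$ is a nonempty compact subset of $\Omega$, exactly as under (V1). The functional $\Gamma_\varepsilon$ is even, of class $C^1$ on $H_\varepsilon$, vanishes at $0$, and satisfies the Palais--Smale condition (Corollary~\ref{cor 2.4}), while Lemma~\ref{lem 2.5}(ii) gives $r_0,M_0>0$ with $\Gamma_\varepsilon\ge M_0$ on $\{\|u\|_\varepsilon=r_0\}$. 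Fix $k$. I would build a $k$-dimensional subspace $E_k^\varepsilon\subset H_\varepsilon$ spanned by $k$ truncated translates of $U_{V_0}$ whose supports are pairwise disjoint, contained in $\Omega_\varepsilon$, and centered at points $p_1,\dots,p_k$ with $|p_i-p_j|\to\infty$ and each $\varepsilon p_j$ converging to a point of $\mathcal V$. On $E_k^\varepsilon$ the penalizations $\Psi_\varepsilon$ and the del Pino--Felmer cut-off of the nonlinearity are inactive, and a computation as in \eqref{2.5} yields $\Gamma_\varepsilon(u)\to-\infty$ as $\|u\|_\varepsilon\to\infty$ in $E_k^\varepsilon$ together with $\sup_{E_k^\varepsilon}\Gamma_\varepsilon\le k\,m(V_0)+o_\varepsilon(1)$. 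A standard $\mathbb Z_2$-equivariant minimax (Krasnoselskii genus; see e.g.\ \cite{willem}) then produces critical values $M_0\le c_{\varepsilon,1}\le\cdots\le c_{\varepsilon,k}\le k\,m(V_0)+o_\varepsilon(1)$ and $k$ pairs $\pm v_{\varepsilon,i}$ of nontrivial critical points of $\Gamma_\varepsilon$; if two consecutive values coincide the critical set at that level has genus $\ge2$, hence is infinite, and at least $k$ pairs are still obtained. Each such $v_{\varepsilon,i}$ has bounded energy, so Lemma~\ref{lem 2.3} and Corollary~\ref{lem 2.7} control $\|v_{\varepsilon,i}\|_\varepsilon$ and $\|v_{\varepsilon,i}\|_{L^\infty}$ uniformly.

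The technical core is to show that the minimax levels satisfy $c_{\varepsilon,i}\to i\,m(V_0)$ and, more importantly, that the critical points realizing them split into exactly $i$ ground-state bubbles collapsing onto $\mathcal V$; I would carry this out along the barycenter/concentration-compactness scheme of \cite{Wang-Zhang-2}, and this is the main obstacle. The upper bound $\limsup_\varepsilon c_{\varepsilon,i}\le i\,m(V_0)$ is the estimate above; for the lower bound one argues by contradiction, choosing for $\varepsilon_n\to0$ symmetric sets $A_n$ of genus $\ge i$ with $\sup_{A_n}\Gamma_{\varepsilon_n}<i\,m(V_0)-\rho$, iterating Lemma~\ref{lem2.10} on their elements (each lies above $\|u\|_{\varepsilon_n}=r_0$, hence carries at least one bubble), and using two facts: every bubble of a Palais--Smale limit carries energy $\ge m(V_0)$, and a sign-changing full-space profile carries energy $\ge2m(V_0)$ — the splitting $I_a(v)=I_a(v^+)+I_a(v^-)\ge2m(a)$ already exploited in the proof of Lemma~\ref{lem2.10}. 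Together these force at most $i-1$ bubbles in the sublevel set, and a barycenter-type map then gives a continuous odd map from $A_n$ into a set of genus $<i$, a contradiction. The same barycenter analysis, applied near the critical level, shows that $c_{\varepsilon,i}$ is realized only by configurations close to $i$ mutually separated $\pm$-bubbles centered in $(\mathcal V^\delta)_\varepsilon$; in particular the bubble count of $v_{\varepsilon,i}$ is exactly $i$ and each concentration point satisfies $V(z_j)\to V_0$.

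Granting this structure, the recovery of solutions proceeds as before. Applying Lemma~\ref{lem2.10} iteratively to $v_{\varepsilon,i}$, up to a subsequence it splits into $i$ bubbles concentrating at points $z_1,\dots,z_i\in\overline\Omega$, each a (positive or sign-changing) solution of $-\Delta w+V(z_j)w=w\log w^2$ on $\mathbb R^N$ or on a half-space; the half-space alternative is excluded exactly as in the proof of Theorem~\ref{th1.2}, since testing the profile equation with $\nabla w\cdot\vec n(z_j)$ and integrating by parts is incompatible with $\nabla V(z_j)\cdot\vec n(z_j)>0$, so every $z_j\in\Omega$. As $z_j\to\mathcal V$ for $\varepsilon$ small, one gets $\|u_{\varepsilon,i}\|_{L^\infty(\mathbb R^N\setminus(\mathcal V^{\delta/2})_\varepsilon)}\to0$ (the analogue of Lemma~\ref{lem 3.2}), and the comparison argument of Proposition~\ref{pro 3.3} gives $|u_{\varepsilon,i}(x)|\le C\exp\{-c(\mathrm{dist}(x,(\mathcal V^{\delta})_\varepsilon))^2\}$ with $C=C(\delta,i)$. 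This decay makes all penalization terms inactive, so $u_{\varepsilon,i}$ solves $-\Delta u+V(\varepsilon x)u=u\log u^2$; setting $v_{\varepsilon,i}(x):=u_{\varepsilon,i}(x/\varepsilon)$ and using $\Gamma_\varepsilon(-u)=\Gamma_\varepsilon(u)$ yields the $k$ pairs $\pm v_{\varepsilon,i}$, and the identity $\mathrm{dist}(x,\mathcal V^\delta)=\varepsilon\,\mathrm{dist}(x/\varepsilon,(\mathcal V^\delta)_\varepsilon)$ turns the exponent into $-c\varepsilon^{-2}(\mathrm{dist}(x,\mathcal V^\delta))^2$. Taking $\varepsilon_k$ to be the smallest of the finitely many thresholds arising for $i=1,\dots,k$ completes the proof.
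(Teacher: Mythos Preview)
Your route differs from the paper's in two essential ways, and the second conceals a real gap.

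\textbf{Different penalization and localization.} The paper does \emph{not} use the del Pino--Felmer truncation $f_\varepsilon$ of Section~\ref{sec2} for this theorem. It keeps the full logarithmic nonlinearity and instead adds a quadratic penalty $\widetilde Q_\varepsilon(u)=W_\varepsilon\bigl(\int \widetilde\chi_\varepsilon u^2\bigr)$ on the $L^2$-mass outside $\Omega_\varepsilon$ (with $\widetilde\chi_\varepsilon$ smooth and of size $\varepsilon^{-6}$). The symmetric mountain pass is run on a \emph{fixed} subspace $E_k=\mathrm{span}\{e_1,\dots,e_k\}\subset H^1_0(B(0,1))$, giving critical values only in some interval $[M_0,M_k]$; no attempt is made to identify them as $i\,m(V_0)$. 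The localization of the critical points near $\Omega$ comes for free from the penalty (bounded energy forces $\int\widetilde\chi_\varepsilon u_{\varepsilon,l}^2$ to be small), and the concentration analysis is then carried out via a \emph{local Pohozaev identity} that uses (V4) directly, following \cite{Wang2017} and \cite{Wang-Zhang-2}. This bypasses any energy quantization or barycenter map.

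\textbf{The gap in your argument.} Your lower bound for $c_{\varepsilon,i}$ hinges on applying Lemma~\ref{lem2.10} ``on their elements'', i.e.\ on arbitrary members of a symmetric set $A_n$ of genus $\ge i$ in the sublevel $\{\Gamma_{\varepsilon_n}<i\,m(V_0)-\rho\}$. But Lemma~\ref{lem2.10} is a statement about Palais--Smale sequences, not about generic elements of a sublevel set; there is no reason a fixed $u\in A_n$ should split into finitely many well-separated bubbles, and without that structure the barycenter-type odd map from $A_n$ into a set of genus $<i$ cannot be defined. Producing such a map uniformly over the whole sublevel set is a substantial task (of the Bahri--Li or Cerami--Devillanova--Solimini type) that you have not supplied, and it is exactly why the paper avoids this route. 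A second difficulty compounds this: with the del Pino--Felmer truncation, nothing forces a critical point at a high level $c_{\varepsilon,i}\le M_k$ to have small $L^\infty$ norm outside $\Omega_\varepsilon$ until you have already proved concentration at $\mathcal V$; the paper's $\widetilde Q_\varepsilon$ penalty gives this smallness directly from the energy bound, which is why it is the right penalization here.
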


To sketch the proof, let $\zeta \in C^\infty(\mathbb R,[0,1])$ be a cut-off function such that   $\zeta'(t) \geq0$ for every $t \in \mathbb R$, $\zeta(t) =0$ if $t \leq0$, $0<\zeta(t) <1$ if $0<t < 1$ and $\zeta(t) =1$ if $t \geq 1$.
We set
$$\widetilde\chi_\varepsilon(x)=
\begin{cases}
0,&x\in\Omega_\varepsilon;\\
\varepsilon^{-6}\zeta\big(\text { dist}(x,\Omega_\varepsilon)\big),&x\not\in \Omega_\varepsilon.
\end{cases}
$$
Then for $W_\varepsilon$ given in \eqref{w} with $\alpha =1$, we define
$$\widetilde Q_\varepsilon(u):=W_\varepsilon\Big(\int_{\mathbb R^N}\widetilde\chi_\varepsilon u^2\Big),$$
and give the modified functional: $H_\varepsilon\to\mathbb R$
\begin{equation*}
\Gamma_{\varepsilon}(u)=\frac{1}{2}\|u\|_\varepsilon^2+\Psi_\varepsilon(u)+\widetilde Q_\varepsilon(u)
-\frac{1}{2}\int_{\mathbb R^N}(u^2\log u^2-u^2)dx,
\end{equation*}
where $H_\varepsilon$ and $\Psi_\varepsilon$ are defined in \eqref{Psi} and \eqref{H}.
It is easy to check that  the results in Lemma \ref{lem 2.3}--Lemma \ref{lem 2.5} also hold for the newly defined $\Gamma_\varepsilon$.
Similar to \cite{Wang-Zhang-1},
 let $\{e_i\}\subset C_0^\infty(B(0,1))$ be an orthonormal basis of $H_0^1(B(0,1))$. Setting $E_k:=\text { span}\{e_1,...,e_k\}$ for any integer $k>0$, then there exist $R_k>0$ and $M_k>0$ such that
 \begin{equation}\label{6.8}
 \sup_{u\in E_k, \|u\|_\varepsilon\geq R_k}\Gamma_\varepsilon(u)<-2\quad \text{and}\quad
 \sup_{u\in E_k, \|u\|_\varepsilon\leq R_k}\Gamma_\varepsilon(u)\leq M_k.
 \end{equation}
For each $k$ and $\varepsilon\in(0,\varepsilon_k)$, according to Lemma \ref{lem 2.5} (ii), \eqref{6.8} and Corollary \ref{cor 2.4},
 we may apply the symmetric mountain-pass theorem \cite{Ra}
 to $\Gamma_\varepsilon$ and obtain $k$ pairs of solutions $\pm u_{\varepsilon,l}$, $l=1,2,...,k$ with
\begin{equation*}
\Gamma_\varepsilon(u_{\varepsilon,l})\in[M_0,M_k],\ \ l=1,2,...,k,\ \ \varepsilon\in(0,\varepsilon_k).
\end{equation*}

Then, we can localize these critical points by a local Pohozaev identity. See \cite[Section 4]{Wang2017} or \cite{Wang-Zhang-2}  for a detailed procedure.
At last, one can recover the original problem by showing the decay property of these critical points.


\vspace{0.4cm}

\end{document}